\renewcommand\footnotemark{}
\newcommand{\address}[1]{\thanks{#1}}
\newcommand{\email}[1]{\thanks{#1}}
\newcommand{\keywords}[1]{}
\newcommand{\firstname}[1]{#1}
\newcommand{\lastname}[1]{#1}
\theoremstyle{definition}
\newtheorem{definition}{Definition}
\newtheorem{assumption}{Assumption}{\bf}{}
\theoremstyle{plain}
\newtheorem{theorem}{Theorem}
\newtheorem{lemma}{Lemma}
\newtheorem{proposition}{Proposition}
\newcommand{\bl}[1]{{\color{blue} #1}}
\newcommand{\ora}[1]{{\color{orange!70!black} #1}}
\newcommand{\revisionone}[1]{#1}  
\DeclareMathOperator{\prox}{prox}
\DeclareMathOperator{\dist}{dist}
\DeclareMathOperator{\im}{Im}
\author{\firstname{Olivier} \lastname{Fercoq}}%
	\email{olivier.fercoq@telecom-paris.fr}%
	\address{LTCI, T\'el\'ecom Paris, Institut Polytechnique de Paris, France}%
	\thanks{This work was supported by the Agence National de la Recherche grant ANR-20-CE40-0027, Optimal Primal-Dual Algorithms (APDO)}
\title
{Quadratic error bound of the smoothed gap \\and the restarted averaged primal-dual hybrid gradient}
\keywords{linear convergence; primal-dual algorithm; error bound; restart}
\begin{document}

\maketitle

\begin{abstract}
	We study the linear convergence of the primal-dual hybrid gradient method. After a review of current analyses, we show that they do not explain properly the behavior of the algorithm, even on the most simple problems. We thus introduce the quadratic error bound of the smoothed gap, a new regularity assumption that holds for a wide class of optimization problems. Equipped with this tool, we manage to prove tighter convergence rates.
	Then, we show that averaging and restarting the primal-dual hybrid gradient allows us to leverage better the regularity constant. Numerical experiments on linear and quadratic programs, ridge regression and image denoising illustrate the findings of the paper.
\end{abstract}

\section{Introduction}

Primal-dual algorithms are widely used for the resolution of optimization problems with constraints. Thanks to them, we can replace complex nonsmooth functions like those encoding the constraints by simpler, sometimes even separable functions, at the expense of solving a saddle point problem instead of an optimization problem.
Then, this amounts to replacing a complex optimization problem by a sequence of simpler problems. 
In this paper, we shall consider more specifically
\begin{equation}
\label{pb:primal}
\min_{x \in \mathcal X} f(x) +f_2(x) + g \square g_2 (Ax) \;.
\end{equation}
where $f$ and $g$ are convex with easily computable proximal operators, $A: \mathcal X \to \mathcal Y$ is a linear operator and $f_2$ and $g_2^*$ are differentiable with $L_f$ and $L_{g^*}$ lipschitz gradients. 
\revisionone{Here, $g \square g_2(z) = \inf_y g(y) + g_2(z-y)$ is the infimal convolution of $g$. and $g_2$.}
To encode constraints, we just need to consider an indicator function for $g$.
When using a primal-dual method, one is looking for a saddle point of the Lagrangian, which is given by
\begin{equation}
L(x,y) = f(x) + f_2(x)+ \langle Ax, y  \rangle - g^*(y) -g_2^*(y) \;.
\end{equation}
Of course, we shall assume throughout this paper that saddle points do exist, which can be guaranteed using conditions like Slater's constraint qualification condition \cite{bauschke2011convex}.

A natural question is then: at what speed do primal-dual algorithms converge? This is trickier for saddle point problems than when we deal with a problem which is in primal form only. 
For instance, if we just assume convexity, methods like Primal-Dual Hybrid Gradient (PDHG) \cite{chambolle2011first} or Alternating Directions Method of Multipliers (ADMM) \cite{gabay1976dual} can be very slow, with a rate of convergence in the worst case in $O(1/\sqrt k)$ \cite{davis2016convergence}. Yet, if we average the iterates, we obtain an ergodic rate in $O(1/k)$. 
Nevertheless, it has been observed that, except for specially designed counter-examples, the averaged algorithms usually perform less well that the plain algorithm.

This is not unexpected. Indeed, the problem you are interested in has no reason to be the most difficult convex problem. 
In order to get a more positive answer, we should understand what makes a given problem easier to solve than another. In the case of gradient descent, strong convexity of the objective function implies a linear rate of convergence, and the more strongly convex the function, the faster is the algorithm. Strong convexity can be generalized to the objective quadratic error bound (QEB) and the Kurdyka-{\L}ojasiewicz inequality in order to show improved rates for a large class of functions \cite{bolte2017error}.

Before going further, let us discuss how one quantifies convergence speed for saddle point problems. Several measures of optimality have been considered in the literature. The most natural one is feasibility error and optimality gap. It directly fits the definition of the optimization problem at stake. However, one cannot compute the optimality gap before the problem is solved. Hence, in algorithms, we usually use the Karush-Kuhn-Tucker (KKT) error instead. It is a computable quantity and if the Lagrangian's gradient is metrically subregular~\cite{rockafellar2009variational}, then a small KKT error implies that the current point is close to the set of saddle points. When the primal and dual domains are bounded, the duality gap is a very good way to measure optimality: it is often easily computable and it is an upper bound to the optimality gap. A generalization to unbounded domains has been proposed in~\cite{tran2018smooth}: the smoothed gap, based on the smoothing of nonsmooth functions~\cite{nesterov2005smooth}, takes finite values even for constrained problems, unlike the duality gap. Moreover, if the smoothness parameter is small and the smoothed gap is small, this means that optimality gap and feasibility error are both small. In the present paper, we shall reuse this concept not only for showing a convergence speed but also to define a new regularity assumption that we believe is better suited to the study of primal-dual algorithms.

Regularity conditions for saddle point problems have been investigated more recently than for plain optimization problems. 
The most successful one is the metric subregularity of the Lagrangian's generalized gradient~\cite{liang2016convergence}. It holds among others for all linear-quadratic programs~\cite{latafat2019new} and implies a linear convergence rate for PDHG and ADMM, as well as the proximal point algorithm~\cite{lu2020adaptive}. One can also show linear convergence if the objective is smooth and strongly convex and the constraints are affine~\cite{du2019linear,alghunaim2020linear,salim2020dualize}. If the function defined as the maximum between objective gap and constraint error has the error bound property, then we can also show improved rates~\cite{lin2020first}.
These result can also be extended to the coordinate descent case \cite{zhu2020linear,alacaoglu2019convergence}, as well as the setup of distributed computations where doing less communication steps is an important matter~\cite{kovalev2020optimal}.
The other assumptions look more restrictive because they require some form of strong convexity. Yet, we will see that for a problem that satisfies two assumptions, the rate predicted by each theory may be different. 

Our contribution is as follows.
\begin{itemize}
	\item In Section \ref{sec:regularity}, we formally review the main regularity assumptions and do first comparisons. 
	\item In order to do deeper comparisons, we analyze PDHG in detail in Sections~\ref{sec:basic} and \ref{sec:linear} under each assumption. This choice is motivated by the self-containedness of the method, which does not require to solve any subproblem.
	\item In Section~\ref{sec:coarse}, we show that the present regularity assumptions may not reflect properly 
	the behavior of PDHG, even on a very simple optimization problem.
	\item We introduce a new regularity assumption in Section~\ref{sec:qebsm}: the quadratic error bound of the smoothed gap. We then show its advantages against previous approaches. The smoothed gap was introduced in~\cite{tran2018smooth} as a tool to analyse and design primal-dual algorithms. Here, we use it directly in the definition of the regularity assumption. We analyze PDHG under this assumption in Section~\ref{sec:qebsmlinear}
	\item We then present and analyze the \revisionone{Restarted Averaged Primal-Dual Hybrid Gradient (RAPDHG)} in Section~\ref{sec:restart} and show that is some situations, it leads to a faster algorithm. An adaptive restart scheme is also presented for the cases where the regularity parameters are not known. This is a first step in leveraging our new understanding of saddle point problems to design more efficient algorithms.
	\item The theoretical results are illustrated in Section~\ref{sec:experiments}, devoted to numerical experiments.
\end{itemize}

We note striking similarities between this paper and the concurrent work of Applegate, Hinder, Lu and Lubin~\cite{applegate2021faster}. Although they focus on linear programs, the authors analyse PDHG and other first order methods thanks to the sharpness of the restricted duality. Indeed, in the case of linear programs, the restricted duality gap is a computable finite-valued measure of optimality and it is always sharp. The methodology is very similar except that the arguments are taylored to linear programs.

\section{Regularity assumptions for saddle point problems}
\label{sec:regularity}

In this section, we define three regularity assumptions for saddle point problems from the literature. We will then present their application range.

\subsection{Notation}

We shall denote $\mathcal X$ the primal space and $\mathcal Y$ the dual space. We assume that thoses vector spaces are Hilbert spaces.
Let us denote $\mathcal Z = \mathcal X \times \mathcal Y$ the primal-dual space.
Similarly for a primal vector $x$ and a dual vector $y$, we shall denote $z = (x,y)$.
This notation will be throughout the paper: for instance $\bar x$ and $\bar y$ 
will be the primal and dual parts of the vector $\bar z$. For $z = (x, y) \in \mathcal Z$, and $\tau, \sigma >0$, we denote $\|z\|_V = (\frac{1}{\tau}\|x\|^2 + \frac{1}{\sigma} \|y \|^2)^{1/2}$ 
\revisionone{and $\langle z, z'\rangle_V = \frac 1 \tau \langle x, x'\rangle + \frac 1 \sigma \langle y, y'\rangle$.}
The proximal operator of a function $f$ is given by
$\prox_f(x) = \arg\min_{x'} f(x') + \frac 12 \|x - x'\|^2$.
For a set-value function $F : \mathcal Z \rightrightarrows \mathcal Z$, we can define $F^{-1} : \mathcal Z \rightrightarrows \mathcal Z$ by
$w \in F(z) \Leftrightarrow z \in F^{-1}(w)$. We will make use of the convex indicator function 
\[
\iota_C(x) = \begin{cases}
0 & \text{ if } x \in C \\
+ \infty & \text{ if } x \not \in C
\end{cases}
\]
In order to ease reading of the paper, we shall use a blue font for results that use differentiable parts of the objective \bl{$f_2$} and \bl{$g_2$} and an orange font for results that use \ora{strong convexity}.

\subsection{Definitions}

The simplest regularity assumption is strong convexity.
\begin{definition}
A function $f : \mathcal X \to \mathbb R \cup \{+\infty\}$ is {\em $\mu$-strongly convex} if $f - \frac{\mu}{2} \|\cdot \|^2$ is convex.
\end{definition}

\begin{assumption}
The Lagrangian function is $\mu$-strongly convex-concave, that is $(x \mapsto L(x,y))$ is $\mu$-strongly convex for all $y$ and $(y \mapsto L(x,y))$ is $\mu$-strongly concave for all $x$.
\end{assumption}
This regularity assumption is used for instance in \cite{chambolle2011first}.
We can generalize strong convexity as follows.
\begin{definition}
	We say that a function $f : \mathcal X \to \mathbb R \cup \{+\infty \}$ has a quadratic error bound 
	if there exists $\eta$ and an open region $\mathcal R \subseteq \mathcal X$ that contains $\arg\min f$ such that for all $x \in \mathcal R$,
	\[
	f(x) \geq \min f + \frac{\eta}{2} \dist(x, \arg\min f)^2 \;.
	\]
	We shall use the acronym $f$ has a $\eta$-QEB.
\end{definition}

Although this is more general than strong convexity, the quadratic error bound is an assumption which is not general enough for saddle point problems. Indeed, for the fundamental class of problems with linear constraints $(y \mapsto L(x,y)$ is linear. Thus, it cannot satisfy a quadratic error bound in $y$. To resolve this issue, we may resort to metric regularity.
\begin{definition}
A set-valued function $F : \mathcal Z \rightrightarrows \mathcal Z$ is {\em metrically subregular} at $z$ for $b$ if there exists $\eta > 0$ and a neighborhood $N(z)$ of $z$ such that $\forall z' \in N(z)$,
\[
\dist(F(z'), b) \geq \eta \dist(z', F^{-1}(b))
\]
\end{definition}

We denote $C(z) = \partial f(x) \times \partial g^*(y)$ (where $\times$ denotes the Cartesian product), $B(z) = [\nabla f_2(x), \nabla g_2^*(y)]$ and $M(z) = [A^\top y, -A x]$. The Lagrangian's subgradient is then $\tilde \partial L(z) = (B+C+M)(z)$. We put a tilde to emphasize the fact that the dual component is the negative of the supergradient.
We shall use the term {\em generalized gradient}.

We have $0 \in 	\tilde \partial L(z^*)$ if and only if $z^*$ is a saddle point of $L$.
If $\tilde \partial L$ is metrically sub-regular at $z^*$ for $0$, this means that we can measure the distance to the set of saddle points with the distance of the subgradient to 0.

\begin{assumption}
The Lagrangian's generalized gradient is metrically subregular, that is there exists $\eta$ such that for all $z^* \in \mathcal Z^* = (\tilde \partial L)^{-1}(0)$, $\tilde \partial L$ is $\eta$-metrically subregular at $z^*$ for $0$.
\end{assumption}
This regularity assumption is used for instance in \cite{liang2016convergence}.
Another regularity assumption considered in the literature is as follows.
\begin{assumption}
	\label{ass:strconv}
The problem is a smooth strongly convex linearly constrained problem. Said otherwise, $f+f_2$ is strongly convex and differentiable, $f$ and $f_2$ both have a Lipschitz continuous gradient, $g_2 = \iota_{\{0\}}$ and $g = \iota_{\{b\}}$, where $b \in \mathcal Y$.
\end{assumption}
This assumption is used for instance in \cite{du2019linear}. The indicator functions encode the constraint $Ax = b$.

\begin{assumption}
	\label{def:error_bound_ineq}
Suppose that $g_2 = \iota_{\{0\}}$ and $g = \iota_{b+\mathbb R^m_-}$ and we encode the constraints $Ax - b \leq 0$. Denote $x^*$ a minimizer of \eqref{pb:primal} and $\mathcal X^*$ the set of minimizers. The problem with inequality constraints satisfies the error bound if there exists $\mu>0$ such that 
\[
F(x) = \max\Big(f(x) + f_2(x) - f(x^*) - f_2(x^*), \max_{1 \leq j \leq m} (Ax - b)_j\Big) \geq \mu \dist(x, \mathcal X^*)
\]
\end{assumption}
This regularity assumption is used to deal with functional inequality constraints in \cite{lin2020first} but we restrict our study to linear inequalities to simplify the exposition of this paper. Yet, since it involves primal quantities only, it is not really adapted to a primal-dual algorithm and we will not discuss it much further in this paper.
	
The next two propositions show that for the minimization of a convex function, quadratic error bound of the objective is merely equivalent to metric subregularity of the subgradient.
\begin{proposition}[Theorem 3.3 in \cite{drusvyatskiy2018error}]
	Let $f$ be a convex function such that $\forall x \in \mathcal R$, $f(x) \geq f(x^*) + \frac{\mu}{2} \dist(x, \mathcal X^*)^2$, where $\mathcal X^* = \arg\min f$ and $x^* \in \mathcal X^*$. 
	Then $\forall x \in \mathcal R$, $\|\partial f(x)\|_0 = \inf_{g \in \partial f(x)} \|g\| \geq \frac{\mu}{2} \dist(x, \mathcal X^*)$.
\end{proposition}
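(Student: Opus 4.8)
The plan is to combine the quadratic growth lower bound with the subgradient inequality, which provides a matching \emph{upper} bound on the optimality gap in terms of the norm of any subgradient. The two inequalities then pinch $f(x)-\min f$ between $\tfrac{\mu}{2}\dist(x,\mathcal X^*)^2$ and $\|g\|\,\dist(x,\mathcal X^*)$, and dividing by the distance yields the claim.

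Concretely, I would first fix $x \in \mathcal R$ and an arbitrary $g \in \partial f(x)$. The goal is to control $\|g\|$ from below. Since $\mathcal X^* = \arg\min f$ is convex and $f$ is constant (equal to $\min f = f(x^*)$) on it, I would test the subgradient inequality against points of $\mathcal X^*$: for any $\bar x \in \mathcal X^*$,
\[
\min f = f(\bar x) \geq f(x) + \langle g, \bar x - x\rangle,
\]
so that $f(x) - \min f \leq \langle g, x - \bar x\rangle \leq \|g\|\,\|x-\bar x\|$ by Cauchy--Schwarz. To avoid assuming the nearest point is attained (which would require closedness/weak compactness of $\mathcal X^*$), I would take a sequence $(\bar x_n) \subseteq \mathcal X^*$ with $\|x - \bar x_n\| \to \dist(x,\mathcal X^*)$ and pass to the limit, obtaining
\[
f(x) - \min f \;\leq\; \|g\|\,\dist(x,\mathcal X^*).
\]

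Next I would invoke the hypothesis (the $\mu$-QEB of $f$) to get $f(x) - \min f \geq \tfrac{\mu}{2}\dist(x,\mathcal X^*)^2$, and chain it with the upper bound to obtain
\[
\tfrac{\mu}{2}\dist(x,\mathcal X^*)^2 \;\leq\; \|g\|\,\dist(x,\mathcal X^*).
\]
If $\dist(x,\mathcal X^*) > 0$, dividing through gives $\|g\| \geq \tfrac{\mu}{2}\dist(x,\mathcal X^*)$; if $\dist(x,\mathcal X^*) = 0$ the inequality $\|g\| \geq 0$ is trivial. Since $g \in \partial f(x)$ was arbitrary, taking the infimum over subgradients yields $\|\partial f(x)\|_0 = \inf_{g \in \partial f(x)} \|g\| \geq \tfrac{\mu}{2}\dist(x,\mathcal X^*)$, as required.

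I do not expect a genuine obstacle here: the argument is short and elementary, resting only on convexity and the defining inequalities. The only point needing mild care is the approximation step replacing the projection onto $\mathcal X^*$ by a minimizing sequence, which keeps the proof valid in a general Hilbert space without extra regularity assumptions on $\mathcal X^*$; the degenerate case $\dist(x,\mathcal X^*)=0$ must also be dispatched separately before dividing.
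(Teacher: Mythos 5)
Your proof is correct. Note, however, that the paper gives no proof of this proposition at all: it is quoted verbatim from Theorem~3.3 of \cite{drusvyatskiy2018error}, so there is no internal argument to compare against. Your argument—testing the subgradient inequality at a minimizing sequence in $\mathcal X^*$ to get $f(x)-\min f \leq \|g\|\,\dist(x,\mathcal X^*)$, pinching this against the quadratic growth bound, and dispatching the degenerate case $\dist(x,\mathcal X^*)=0$ before dividing—is the standard elementary proof of this direction of the equivalence, it is valid in any Hilbert space, and it produces exactly the stated constant $\tfrac{\mu}{2}$ (taking the infimum over $g\in\partial f(x)$ at the end, with the convention that the infimum over an empty subdifferential is $+\infty$, covers all cases).
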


\begin{proposition}[Theorem 3.3 in \cite{drusvyatskiy2018error}]
\label{prop:msr2qeb}
	Let $f$ be a convex function such that $f(x) \leq f_0$ implies $\|\partial f(x)\|_0 \geq \eta \dist(x, \mathcal X^*)$.
	Then $f(x) \geq f(x^*) + \frac{\eta}{2} \dist(x, \mathcal X^*)^2$ as soon as $f(x) \leq f_0$. 
\end{proposition}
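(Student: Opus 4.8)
The plan is to prove this converse direction (a slope lower bound on the sublevel set forces quadratic growth) by a continuous steepest-descent argument: I would run the subgradient flow of $f$ starting from an arbitrary point of $\{f \le f_0\}$ and track two scalar quantities along the trajectory — the excess value and the squared distance to the minimizers — then show that a well-chosen combination of them is nonincreasing. I first checked that the naive one-dimensional reduction along the segment from $x$ to its projection onto $\mathcal X^*$ does \emph{not} close the argument, because the minimal-norm subgradient $\|\partial f\|_0$ need not point toward $\mathcal X^*$, so the directional derivative along that segment can be strictly smaller than the slope; the flow fixes this by moving in the true steepest-descent direction.

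Concretely, fix $x_0$ with $f(x_0) \le f_0$ and let $x(\cdot)$ solve the subgradient flow of $f$ with minimal-norm velocity, which exists and is unique for proper lower semicontinuous convex $f$ by the classical theory of gradient flows (Brezis). Write $g(t) \in \partial f(x(t))$ for the minimal-norm subgradient, so $\|g(t)\| = \|\partial f(x(t))\|_0$, and set $F(t) = f(x(t)) - \min f \ge 0$ and $D(t) = \tfrac12 \dist(x(t), \mathcal X^*)^2$. Since $f$ is nonincreasing along the flow, $f(x(t)) \le f_0$ for all $t$, so the hypothesis applies throughout and gives $\|g(t)\| \ge \eta\, \dist(x(t), \mathcal X^*)$. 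The two derivatives I need are standard: the energy dissipation identity $F'(t) = -\|g(t)\|^2$, and $D'(t) = -\langle g(t),\, x(t) - P_{\mathcal X^*}(x(t))\rangle$, the latter because $\tfrac12\dist(\cdot,\mathcal X^*)^2$ is $C^1$ with gradient $x - P_{\mathcal X^*}(x)$.

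The key computation is to differentiate $F - \eta D$. Using only Cauchy--Schwarz and the slope bound, $\tfrac{d}{dt}\big(F - \eta D\big) = -\|g\|^2 + \eta\langle g,\, x - P_{\mathcal X^*}(x)\rangle \le -\|g\|^2 + \eta\|g\|\,\dist(x,\mathcal X^*) \le -\|g\|^2 + \|g\|^2 = 0$, where the last inequality is exactly $\eta\,\dist(x,\mathcal X^*) \le \|g\|$. Hence $F - \eta D$ is nonincreasing, so $f(x_0) - \min f = F(0) \ge \eta D(0) + \lim_{t\to\infty}(F - \eta D)(t)$, and the desired bound $f(x_0) - \min f \ge \tfrac\eta2 \dist(x_0,\mathcal X^*)^2$ will follow once I show that limit is $\ge 0$.

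I expect the behavior at infinity to be the main obstacle, since in a general Hilbert space the trajectory converges only weakly, so I would avoid strong convergence of $x(t)$ and argue with the scalars alone. The convexity subgradient inequality gives $\langle g(t),\, x(t)-P_{\mathcal X^*}(x(t))\rangle \ge F(t)$, hence $D'(t) \le -F(t)$; integrating shows $\int_0^\infty F < \infty$, and since $F$ is nonincreasing it tends to $0$. For the distance, $D$ is nonincreasing; if it had a positive limit $D_\infty > 0$, then $F'(t) = -\|g(t)\|^2 \le -\eta^2\dist(x(t),\mathcal X^*)^2 = -2\eta^2 D(t) \le -2\eta^2 D_\infty < 0$ would force $F(t) \to -\infty$, contradicting $F \ge 0$; therefore $D(t) \to 0$ as well. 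Both terms vanish in the limit, so $\lim_{t\to\infty}(F-\eta D) = 0$ and the proof closes. The only remaining technical care is to justify the flow calculus in the nonsmooth convex case — absolute continuity of $t\mapsto f(x(t))$ and the dissipation identity $F' = -\|g\|^2$ — which is classical.
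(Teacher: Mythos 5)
Your proof is correct, and it takes a genuinely different route from the paper, which in fact gives no argument at all for this proposition: it is imported verbatim as Theorem~3.3 of \cite{drusvyatskiy2018error}, where the implication from the slope bound to quadratic growth is obtained by variational-analysis tools (Ekeland's variational principle, used to produce almost-stationary points at controlled distance and to exploit the slope hypothesis at those points). Your argument is instead a subgradient-flow proof in the spirit of Bolte--Daniilidis--Ley--Mazet's flow characterizations of {\L}ojasiewicz-type inequalities: you run the Brezis flow $x'(t)\in-\partial f(x(t))$ with minimal selection $g(t)$, note that the sublevel set $\{f\le f_0\}$ is invariant so the hypothesis is available along the whole trajectory, and show that $F-\eta D$ is nonincreasing via $\frac{d}{dt}(F-\eta D)=-\|g\|^2+\eta\langle g,\,x-P_{\mathcal X^*}x\rangle\le-\|g\|^2+\eta\|g\|\dist(x,\mathcal X^*)\le 0$; this Cauchy--Schwarz step is exactly tight, which is why the sharp constant $\eta/2$ comes out naturally. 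Your treatment of the limit is also sound: $D'\le-F$ gives $D$ nonincreasing, and $D_\infty>0$ would force $F'\le-2\eta^2 D_\infty$ hence $F\to-\infty$, a contradiction, so $D(t)\to0$; in fact this alone suffices, since $F\ge0$ gives $\lim(F-\eta D)\ge0$ directly, and your separate verification that $F(t)\to0$ can be dropped. What each approach buys: yours is self-contained in Hilbert space modulo the classical Brezis theory (existence of the flow, the dissipation identity $F'=-\|g\|^2$ a.e., continuity at $t=0$), and it preserves the sharp constant; the Ekeland route of \cite{drusvyatskiy2018error} needs no existence theory for the flow and extends beyond Hilbert spaces and beyond convexity. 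The one thing you should state explicitly, as the cited theorem does, is that $f$ is proper and lower semicontinuous with $\arg\min f\neq\emptyset$ --- without these, the flow theory and the projection $P_{\mathcal X^*}$ you use are unavailable; in the paper's setting these are standing assumptions.
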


For saddle point problems, we have the following result.
\begin{proposition}[Lemma 4.2 in \cite{latafat2019new}]
\label{prop:strconv2msr}
If $L$ is $\mu$-strongly convex-concave, then $\tilde \partial L$ is $\mu$-metrically sub-regular at $z^*$ for 0 where $z^*$ is the unique saddle point of $L$. 
\end{proposition}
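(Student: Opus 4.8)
The plan is to exploit the fact that $\mu$-strong convexity–concavity of $L$ is equivalent to $\mu$-strong monotonicity of the generalized gradient $\tilde\partial L$, and that strong monotonicity is a very clean sufficient condition for metric subregularity via Cauchy--Schwarz.

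First I would establish that $\tilde\partial L = B+C+M$ is $\mu$-strongly monotone on $\mathcal Z$. Write $\Phi = f+f_2$ and $\Psi = g^*+g_2^*$, so that $L(x,y) = \Phi(x) + \langle Ax, y\rangle - \Psi(y)$. Since the coupling term is linear in each variable separately, the $\mu$-strong convexity of $L(\cdot,y)$ and $\mu$-strong concavity of $L(x,\cdot)$ amount exactly to $\mu$-strong convexity of both $\Phi$ and $\Psi$; hence $\partial\Phi$ and $\partial\Psi$ are $\mu$-strongly monotone. Take $z=(x,y)$, $z'=(x',y')$ with generalized gradients $w=(u+A^\top y,\,v-Ax)\in\tilde\partial L(z)$ and $w'=(u'+A^\top y',\,v'-Ax')\in\tilde\partial L(z')$, where $u\in\partial\Phi(x)$, $v\in\partial\Psi(y)$, and similarly for the primed quantities. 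Expanding $\langle w-w',z-z'\rangle$, the coupling $M$ contributes $\langle A^\top(y-y'),x-x'\rangle-\langle A(x-x'),y-y'\rangle$, which vanishes by the adjoint identity. What remains is
\[
\langle u-u',x-x'\rangle+\langle v-v',y-y'\rangle\ge\mu\|x-x'\|^2+\mu\|y-y'\|^2=\mu\|z-z'\|^2.
\]

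Next, strong monotonicity immediately gives uniqueness of the zero: if $0\in\tilde\partial L(z_1^*)$ and $0\in\tilde\partial L(z_2^*)$, then $0=\langle 0,z_1^*-z_2^*\rangle\ge\mu\|z_1^*-z_2^*\|^2$, so $z_1^*=z_2^*$; thus $(\tilde\partial L)^{-1}(0)=\{z^*\}$ and $\dist(z',(\tilde\partial L)^{-1}(0))=\|z'-z^*\|$. Finally I would deduce metric subregularity. For any $z'$ and any $w\in\tilde\partial L(z')$, apply strong monotonicity to the pair $(z',z^*)$ together with $0\in\tilde\partial L(z^*)$ to obtain $\langle w,z'-z^*\rangle\ge\mu\|z'-z^*\|^2$; Cauchy--Schwarz then yields $\|w\|\ge\mu\|z'-z^*\|$. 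Taking the infimum over $w\in\tilde\partial L(z')$ gives $\dist(\tilde\partial L(z'),0)\ge\mu\,\dist(z',(\tilde\partial L)^{-1}(0))$, valid on all of $\mathcal Z$, which is precisely $\mu$-metric subregularity at $z^*$ for $0$ (with neighborhood the whole space).

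I expect the only delicate point to be the bookkeeping in the first step: one must check that the sign convention in $\tilde\partial L$ (negating the dual block) is exactly what turns strong concavity in $y$ into strong monotonicity and, simultaneously, what makes the skew-symmetric coupling $M$ monotone so that it neither helps nor hurts the estimate. Everything after that is a direct application of Cauchy--Schwarz and costs essentially nothing.
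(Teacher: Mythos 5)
Your proof is correct. Note that the paper does not prove Proposition~\ref{prop:strconv2msr} at all---it imports it by citation from \cite{latafat2019new}---and your argument is the standard one for this fact: the skew coupling $M$ contributes nothing to the monotonicity inequality, so $\tilde \partial L$ inherits $\mu$-strong monotonicity from $\partial(f+f_2)$ and $\partial(g^*+g_2^*)$, and Cauchy--Schwarz then gives $\dist(\tilde\partial L(z'),0) \geq \mu \dist(z', (\tilde\partial L)^{-1}(0))$ globally, which is even stronger than the neighborhood statement required by the paper's definition of metric subregularity.
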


In Table \ref{tab:applicability}, we can see that the situation is more complex for saddle point problems than plain optimization problems. Indeed, the assumptions are not generalizations one of the other. Yet, metric subregularity seems to be the most general since it holds for more types of problems. In particular all linear programs and quadratic programs have a metrically subregular Lagrangian's generalized gradient \cite{latafat2019new}.

\begin{table}
\begin{tabular}{llll}
Assumption & Strongly convex & Linear  & Quadratic  \\
 &   \& smooth   & program  &program \\
\hline
Strongly convex-concave & Yes & No & No \\
Smooth strongly convex  & Solve in primal & No & Strongly convex obj.\\
with linear constraints&space only& & \& linear constraints\\
Error bound with inequality constraints & No & Yes & No \\

Metric sub-regularity & Yes & Yes & Yes \\
\end{tabular}
\caption{Domain of applicability of each assumption. ``Strongly convex \& smooth'' means that $g \square g_2$ is a differentiable function and $f+f_2$ is strongly convex.}
\label{tab:applicability}
\end{table}

\section{Basic inequalities for the study of PDHG}

\label{sec:basic}

Primal-Dual Hybrid Gradient (also known as \revisionone{asymmetric forward-backward-adjoint}) is the algorithm defined by Algorithm~\ref{alg:pdhg}.
\begin{algorithm}
\begin{align*}
&\bar x_{k+1} = \prox_{\tau f}(x_k \bl{ - \tau \nabla f_2(x_k)} -\tau A^\top y_k) \\
& \bar y_{k+1} = \prox_{\sigma g^*}(y_k \bl{ - \sigma \revisionone{\nabla g_2^*}(y_k)} + \sigma A \bar x_{k+1}) \\
& x_{k+1} = \bar x_{k+1} - \tau A^\top (\bar y_{k+1} - y_k) \\
& y_{k+1} = \bar y_{k+1}
\end{align*}
\caption{Primal-Dual Hybrid Gradient (PDHG)}
\label{alg:pdhg}
\end{algorithm}
We shall use the definition of \cite{latafat2019new} \revisionone{rather than \cite{condat2013primal,vu2013splitting}} because we believe it simplifies the analysis. Note that the algorithm of Chambolle and Pock \cite{chambolle2011first} can be recovered in the case $f_2 = 0$ by taking $\bar z_{k+1}$ as a state variable instead of $z_{k+1}$ and using $x_k = \bar x_k - \tau A^\top ( y_k -  y_{k-1}) = \bar x_k - \tau A^\top (\bar y_k - \bar y_{k-1})$:
\begin{align*}
&\bar x_{k+1} = \prox_{\tau f}(\bar x_k -\tau A^\top (2 \bar y_k - \bar y_{k-1})) \\
& \bar y_{k+1} = \prox_{\sigma g^*}(\bar y_k \bl{ - \sigma \nabla g_2(\bar y_k)} + \sigma A \bar x_{k+1}) \end{align*}

PDHG is widely used for the resolution of large-dimensional convex-concave saddle point problems. Indeed, this algorithm only requires simple operations, namely matrix-vector multiplications, proximal operators and gradients, while keeping good convergence properties. We refer the reader to~\cite{condat2019proximal} for a review of variants of the algorithm and their analysis. \revisionone{As shown in \cite{jiang2022unified}, the proof techniques for all these variants share strong similarities and we believe that the results of the present paper could be easily adapted to them.}

It can be conveniently seen as a fixed point algorithm $z_{k+1} = T(z_k)$ where $T$ is defined by
	\begin{align}
	&\bar x = \prox_{\tau f}(x \bl{-\tau \nabla f_2(x)}- \tau A^\top y) 
	&&\bar y = \prox_{\sigma g^*}(y \bl{-\sigma \nabla g_2^*(y)} + \sigma A \bar x) \notag \\
	& x^+ = \bar x - \tau A^\top (\bar y - y) 
	&& y^+ = \bar y \notag \\
	& T(x,y) = (x^+, y^+)
	\label{eq:defT}
	\end{align}

For $z = (x, y) \in \mathcal Z$, we denote $\|z\|_V = (\frac{1}{\tau}\|x\|^2 + \frac{1}{\sigma} \|y \|^2)^{1/2}$, \revisionone{ $\gamma = \sigma \tau \|A\|^2$, $\alpha_f = \tau L_f/2$, $\alpha_g = \sigma L_{g^*}/2$ and 
\begin{align*}
\tilde V(z,z') &= \frac{1- \tau L_f/2}{2\tau}  \|\bar x - x -\bar x' +x'\|^2 + (\frac{1-\sigma L_{g^*}/2}{2\sigma} - \frac{\tau \|A\|^2}{2} ) \|\bar y - y -\bar y' + y'\|^2 \\
&= \frac{1- \alpha_f}{2\tau}  \|\bar x - x -\bar x' +x'\|^2 + \frac{1 - \alpha_g-\gamma}{2\sigma}\|\bar y - y -\bar y' + y'\|^2  \;.
\end{align*}}
 We will first show that the fixed point operator $T$ is an averaged operator \cite{bauschke2011convex} in this norm. Then, we will give an upper bound on the Lagrangian's gap and a convergence result. All the results are small variations of already known facts so we defer the proofs to the appendix. Note that we may have adapted the results for our purpose.

\begin{lemma}[Prop 12.26 in \cite{bauschke2011convex}]
	\label{lem:prox}
Let $p = \prox_{\tau f}(x)$ and  $p' = \prox_{\tau f}(x')$ where $f$ is \ora{$\mu_f$-strongly} convex. For all $x$ and $x'$,
\[
f(p) + \frac{1}{2\tau} \|p - x\|^2 \leq f(x') + \frac{1}{2\tau} \|x' - x\|^2 - \frac {1 \ora{+\tau \mu_f}}{2\tau}\|p - x'\|^2 
\]
\[
\ora{(1+ 2\tau\mu_f)} \|p - p'\|^2 \leq \|x' - x\|^2 - \|p - x - p'+x'\|^2 \;.
\]
\end{lemma}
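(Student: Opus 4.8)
The plan is to derive both inequalities from the first-order optimality condition of the proximal subproblem together with the strong convexity (equivalently strong monotonicity of $\partial f$). Writing $p = \prox_{\tau f}(x)$, the optimality condition reads $\frac{1}{\tau}(x-p) \in \partial f(p)$, and similarly $\frac{1}{\tau}(x'-p') \in \partial f(p')$.

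For the first inequality, I would invoke the $\mu_f$-strong convexity of $f$ at the point $p$ using the particular subgradient $\frac{1}{\tau}(x-p)$, evaluated against $x'$:
\[
f(x') \geq f(p) + \tfrac{1}{\tau}\langle x - p, x' - p\rangle + \tfrac{\mu_f}{2}\|x' - p\|^2 \;.
\]
Rearranging this to isolate $f(p)$ and adding $\frac{1}{2\tau}\|p-x\|^2$ to both sides, the claim reduces to a purely algebraic identity. The key manipulation is the three-point expansion $\|x'-x\|^2 = \|x'-p\|^2 + 2\langle x'-p, p-x\rangle + \|p-x\|^2$, which converts the cross term $\frac{1}{\tau}\langle x-p, x'-p\rangle$ into $\frac{1}{2\tau}\|x'-x\|^2$ minus quadratic remainders; the coefficients of $\|x'-p\|^2$ then collapse so that $\frac{1}{2\tau} + \frac{\mu_f}{2} = \frac{1+\tau\mu_f}{2\tau}$, producing exactly the stated bound.

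For the second inequality, I would use both optimality conditions and the $\mu_f$-strong monotonicity of $\partial f$: subtracting the two inclusions and pairing with $p-p'$ gives
\[
\tfrac{1}{\tau}\langle (x-x') - (p-p'), p-p'\rangle \geq \mu_f \|p-p'\|^2 \;,
\]
which rearranges to $\langle x-x', p-p'\rangle \geq (1+\tau\mu_f)\|p-p'\|^2$. Setting $a = p-p'$ and $b = x-x'$, the right-hand side of the target equals $\|b\|^2 - \|a-b\|^2 = 2\langle a, b\rangle - \|a\|^2$, and substituting the inner-product lower bound yields $2(1+\tau\mu_f)\|a\|^2 - \|a\|^2 = (1+2\tau\mu_f)\|p-p'\|^2$, as required.

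Neither step presents a genuine obstacle; the only care needed is bookkeeping the constants so that the strong-convexity term $\frac{\mu_f}{2}\|x'-p\|^2$ lands precisely on the $\frac{1+\tau\mu_f}{2\tau}$ coefficient in the first part, and so that the factor $1+\tau\mu_f$ from strong monotonicity doubles correctly to $1+2\tau\mu_f$ after the parallelogram expansion in the second. Both statements reduce to the classical firm nonexpansiveness of the prox, here sharpened by the strong convexity of $f$, so taking $\mu_f = 0$ recovers the standard versions as a sanity check.
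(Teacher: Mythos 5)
Your proof is correct and follows essentially the same route as the paper: both reduce the claims to the prox optimality condition $\frac{1}{\tau}(x-p) \in \partial f(p)$ combined with $\mu_f$-strong convexity, and both second parts hinge on the identical intermediate inequality $\langle p-p', x-x'\rangle \geq (1+\tau\mu_f)\|p-p'\|^2$ followed by the same expansion of $\|p-x-p'+x'\|^2$. The only cosmetic difference is in packaging: the paper proves the first inequality via Fermat's rule applied to the tilted convex function $z \mapsto f(z) + \frac{1}{2\tau}\|z-x\|^2 - \frac{1+\tau\mu_f}{2\tau}\|p-z\|^2$ and then obtains the monotonicity inequality by applying the first inequality twice (at $(x,p')$ and $(x',p)$) and summing, whereas you unpack the same content directly through the strong-convexity subgradient inequality, the three-point identity, and strong monotonicity of $\partial f$.
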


\revisionone{The following lemma can be mostly found in \cite[Theorem 2.5]{latafat2019new}. In comparison, we write everything in the same norm $\|\cdot\|_V$ and we do not restrict to $z'$ being a saddle point of the Lagrangian.}
\begin{lemma}
\label{lem:averaged_operator}
	Let $T: \mathcal X \times \mathcal Y \to \mathcal X \times \mathcal Y$ be defined for any $(x,y)$ \revisionone{by \eqref{eq:defT}}.
	\bl{Suppose that $\nabla f_2$ is $L_f$-Lipschitz continuous and $\nabla g_2^*$ is $L_{g^*}$-Lipschitz continuous.}
	If the step sizes satisfy $\gamma = \sigma \tau \|A\|^2 < 1$, \bl{$\alpha_f = \tau L_f/2 < 1$, $\alpha_g = \sigma L_{g^*}/2 < 1$} then
	$T$ is  nonexpansive in the norm $\|\cdot\|_V$, 
	\revisionone{
		\begin{align}
\label{eq:nonexpansive_with_vtilde}
		\|T(z) - T(z')\|^2_V & \leq 
		\|z - z'\|_V^2 - 2\tilde V(z, z')
		\end{align}}
	and $T$ is $\frac{1}{1+ \lambda}$-averaged where 
	\begin{align*}
	\lambda& = \bl{1 - \alpha_f - \frac{\alpha_g - (1-\gamma)\alpha_f}{2} - \sqrt{(1-\alpha_f)^2\gamma + ((1-\gamma)\alpha_f -\alpha_g)^2/4}} \;,
	\end{align*}
which means for $z=(x,y)$ and $z'=(x',y')$
\begin{equation}
\label{averaged}
	\|T(z) - T(z')\|^2_V
	 \leq 
	\|z - z'\|_V^2 - \lambda \|z - T(z) - z' + T(z')\|^2_V \;.
	\end{equation}
As a consequence, $(z_k)$ converges to a saddle point of the Lagrangian. 
Moreover, if  $\sigma L_{g^*}/2 \leq \alpha_f (1 -  \sigma \tau \|A\|^2)$, then 
$\lambda \geq (1-\sqrt \gamma)\bl{( 1-\alpha_f)}$.
\end{lemma}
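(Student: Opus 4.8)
The plan is to work in the general convex case, that is to set the strong-convexity constants (the orange terms) in Lemma~\ref{lem:prox} to zero, and to apply the second inequality of Lemma~\ref{lem:prox} separately to the two prox steps of \eqref{eq:defT}: to the primal step, with argument $x-\tau\nabla f_2(x)-\tau A^\top y$, and to the dual step, with argument $y-\sigma\nabla g_2^*(y)+\sigma A\bar x$. Throughout I abbreviate $e_x=(\bar x-x)-(\bar x'-x')$, $e_y=(\bar y-y)-(\bar y'-y')$, $d_f=\nabla f_2(x)-\nabla f_2(x')$ and $d_g=\nabla g_2^*(y)-\nabla g_2^*(y')$, so that $e_x,e_y$ are exactly the vectors appearing in $\tilde V$. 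Expanding the sum and the difference of the two sides of each prox inequality (both sides sum to $\bar x-\bar x'$, resp.\ $\bar y-\bar y'$) turns Lemma~\ref{lem:prox} into
\[
\tfrac1\tau\|\bar x-\bar x'\|^2\le\tfrac1\tau\|x-x'\|^2-\tfrac1\tau\|e_x\|^2-2\langle\bar x-\bar x',d_f\rangle-2\langle\bar x-\bar x',A^\top(y-y')\rangle,
\]
\[
\tfrac1\sigma\|\bar y-\bar y'\|^2\le\tfrac1\sigma\|y-y'\|^2-\tfrac1\sigma\|e_y\|^2-2\langle\bar y-\bar y',d_g\rangle+2\langle A(\bar x-\bar x'),\bar y-\bar y'\rangle.
\]

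To establish \eqref{eq:nonexpansive_with_vtilde} I would expand $\|T(z)-T(z')\|_V^2$ using $x^+-x'^+=(\bar x-\bar x')-\tau A^\top e_y$ and $y^+-y'^+=\bar y-\bar y'$, giving $\tfrac1\tau\|x^+-x'^+\|^2=\tfrac1\tau\|\bar x-\bar x'\|^2-2\langle A(\bar x-\bar x'),e_y\rangle+\tau\|A^\top e_y\|^2$. Adding the dual bound and substituting the primal bound, the decisive point is that, thanks to the over-relaxation step $x^+=\bar x-\tau A^\top(\bar y-y)$, every term containing $A(\bar x-\bar x')$ cancels exactly once one writes $\bar y-\bar y'=(y-y')+e_y$. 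What survives is $\|z-z'\|_V^2-\tfrac1\tau\|e_x\|^2-\tfrac1\sigma\|e_y\|^2-2\langle\bar x-\bar x',d_f\rangle-2\langle\bar y-\bar y',d_g\rangle+\tau\|A^\top e_y\|^2$. I would then bound $\tau\|A^\top e_y\|^2\le(\gamma/\sigma)\|e_y\|^2$, and control each gradient term by splitting $\bar x-\bar x'=(x-x')+e_x$: the part $-2\langle x-x',d_f\rangle\le-\tfrac{2}{L_f}\|d_f\|^2$ by cocoercivity of $\nabla f_2$ (Baillon--Haddad), while $-2\langle e_x,d_f\rangle\le\tfrac{L_f}{2}\|e_x\|^2+\tfrac{2}{L_f}\|d_f\|^2$ by Young's inequality, so the $\|d_f\|^2$ terms cancel and only $\tfrac{L_f}{2}\|e_x\|^2=\tfrac{\alpha_f}{\tau}\|e_x\|^2$ remains; the same treatment of $d_g$ leaves $\tfrac{\alpha_g}{\sigma}\|e_y\|^2$. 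Collecting the coefficients of $\|e_x\|^2$ and $\|e_y\|^2$ reproduces exactly $2\tilde V(z,z')$.

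For the averaging constant I would note that $z-T(z)-z'+T(z')=(-e_x+\tau A^\top e_y,\,-e_y)$, so that $\|z-T(z)-z'+T(z')\|_V^2=\tfrac1\tau\|e_x-\tau A^\top e_y\|^2+\tfrac1\sigma\|e_y\|^2$. Combined with \eqref{eq:nonexpansive_with_vtilde}, the averaging inequality \eqref{averaged} reduces to the quadratic-form inequality $2\tilde V(z,z')\ge\lambda\big(\tfrac1\tau\|e_x-\tau A^\top e_y\|^2+\tfrac1\sigma\|e_y\|^2\big)$, required for all $e_x,e_y$. Diagonalizing $A$ by its singular value decomposition, this amounts to positive semidefiniteness, for each singular value $s\le\|A\|$, of a $2\times2$ form in the paired components of $e_x,e_y$; since both the cross term and the negative term $-\lambda\tau s^2$ grow with $s$, the worst case is $s=\|A\|$, where, using $\sigma\tau\|A\|^2=\gamma$, the determinant condition reads $h(\lambda):=\lambda^2-2m\lambda+(1-\alpha_f)(1-\alpha_g-\gamma)\ge0$ with $m=1-\tfrac12(\alpha_f(1+\gamma)+\alpha_g)$. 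This holds up to the smaller root $\lambda_-$, and one checks that $\lambda_-$, after simplifying its discriminant to $(1-\alpha_f)^2\gamma+((1-\gamma)\alpha_f-\alpha_g)^2/4$, is exactly the stated constant. Under the step-size conditions one verifies $\lambda>0$ (after squaring this is equivalent to $\gamma+\alpha_g<1$), which yields both \eqref{averaged} and, since $\lambda\ge0$, nonexpansiveness. Convergence of $(z_k)$ then follows from the theory of averaged operators: the fixed points of $T$ are precisely the zeros of $\tilde\partial L$, i.e.\ the saddle points (which exist by assumption), and an averaged operator with nonempty fixed-point set generates weakly convergent iterates.

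Finally, for the refinement under $\sigma L_{g^*}/2\le\alpha_f(1-\gamma)$, i.e.\ $\beta:=\tfrac12((1-\gamma)\alpha_f-\alpha_g)\ge0$, I would use $\sqrt{(1-\alpha_f)^2\gamma}=(1-\alpha_f)\sqrt\gamma$ and rewrite $\lambda=(1-\alpha_f)+\beta-\sqrt{(1-\alpha_f)^2\gamma+\beta^2}$; the claim $\lambda\ge(1-\sqrt\gamma)(1-\alpha_f)$ is then equivalent to $\beta+(1-\alpha_f)\sqrt\gamma\ge\sqrt{(1-\alpha_f)^2\gamma+\beta^2}$, which follows by squaring since $\beta\ge0$. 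I expect the real work to lie in the $\tilde V$ bookkeeping — the exact cancellation of the coupling and the cocoercivity/Young combination that produces the clean $\alpha_f,\alpha_g$ factors — and above all in the $2\times2$ eigenvalue computation that identifies the worst singular direction and yields the square-root term in $\lambda$; the nonexpansiveness, the convergence statement and the last scalar inequality are comparatively routine.
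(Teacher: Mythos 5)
Your proof is correct, and up to \eqref{eq:nonexpansive_with_vtilde} it is essentially the paper's: both arguments rest on firm nonexpansiveness of the prox (Lemma~\ref{lem:prox} with the strong-convexity constants set to zero), the exact cancellation of all coupling terms produced by the over-relaxation step $x^+=\bar x-\tau A^\top(\bar y-y)$, and a cocoercivity/Young combination that converts the smooth terms into $\frac{\alpha_f}{\tau}\|e_x\|^2+\frac{\alpha_g}{\sigma}\|e_y\|^2$ (the paper performs this inside the expanded prox-difference norms, you perform it on the inner products $\langle\bar x-\bar x',d_f\rangle$ after splitting $\bar x-\bar x'=(x-x')+e_x$; same ingredients, same constants). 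The genuine divergence is in how $\lambda$ is obtained. The paper splits off $\lambda\|z-T(z)-z'+T(z')\|_V^2$, applies Young's inequality with a second free parameter $\alpha$, and solves the system $\lambda/\alpha=1-\alpha_f-\lambda$, $(1+\lambda+\lambda\alpha)\gamma=1-\alpha_g-\lambda$; you instead reduce \eqref{averaged} to positive semidefiniteness of the quadratic form $2\tilde V(z,z')-\lambda\big(\frac{1}{\tau}\|e_x-\tau A^\top e_y\|^2+\frac{1}{\sigma}\|e_y\|^2\big)$ and analyse it spectrally. Both routes lead to the same quadratic inequality $\lambda^2-\big(2-\alpha_f(1+\gamma)-\alpha_g\big)\lambda+(1-\alpha_f)(1-\alpha_g-\gamma)\ge0$, hence to the same $\lambda$. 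Your version buys two things the paper leaves implicit: it shows the stated $\lambda$ is the \emph{best} constant obtainable from this quadratic form, and it actually proves the closing bound $\lambda\ge(1-\sqrt\gamma)(1-\alpha_f)$, which the paper merely asserts. The paper's version buys dimension-independence: $\mathcal X,\mathcal Y$ are Hilbert spaces, where an SVD of $A$ need not exist; to make your argument rigorous in that setting, minimize the form over $e_x$ and then bound $\|A^\top e_y\|\le\|A\|\,\|e_y\|$, which reproduces exactly your worst-case condition at $s=\|A\|$. Two small points to tighten: positive semidefiniteness of a $2\times2$ block needs nonnegative diagonal entries in addition to the determinant condition, but this is automatic since $h(1-\alpha_f)=-\gamma(1-\alpha_f)^2\le0$ forces $\lambda_-\le 1-\alpha_f$; and, as you correctly observe, $\lambda>0$ requires $\gamma+\alpha_g<1$, which the stated hypotheses do not literally imply --- this mirrors an implicit assumption of the paper itself (its appendix statement of the lemma adds $\alpha_g\le\alpha_f(1-\gamma)$, and Lemma~\ref{lem:lag_ineq} needs $\gamma+\alpha_g<1$ in any case).
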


\revisionone{A side result of independent interest proved within Lemma~\ref{lem:averaged_operator} is as follows.}
\begin{lemma}
	\label{lem:tildev} \revisionone{For any $z^* \in \mathcal Z^*$,}	$\tilde V$ satisfies
	\begin{equation*}
	\revisionone{\tilde V(z_k, z^*)} = \frac{1-\revisionone{\alpha_f}}{2\tau}  \|\bar x_{k+1} - x_k\|^2 + \frac{1- \revisionone{\alpha_g}-\gamma}{2\sigma}  \|\bar y_{k+1} - y_k\|^2\geq \frac{\lambda}{2} \|z_{k+1} - z_k\|^2_V\;.
	\end{equation*}
\end{lemma}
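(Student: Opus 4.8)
The plan is to treat the two assertions separately: the displayed equality follows purely from the fact that a saddle point is a fixed point of $T$, whereas the inequality is exactly the pointwise estimate that already bridges \eqref{eq:nonexpansive_with_vtilde} and \eqref{averaged}, specialized to a fixed point. For the equality, I would first note that any $z^* = (x^*,y^*) \in \mathcal Z^*$ is a fixed point of the operator $T$ of \eqref{eq:defT}; reading off its last two lines at $z^*$ gives $\bar y^* = y^+ = y^*$, and then $\bar x^* = x^+ + \tau A^\top(\bar y^* - y^*) = x^*$. Hence the intermediate iterates attached to $z^*$ coincide with $z^*$ itself, so that $\bar x^* - x^* = 0$ and $\bar y^* - y^* = 0$. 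Substituting these into the definition of $\tilde V(z_k, z^*)$ collapses the cross-difference terms to $\bar x_{k+1} - x_k$ and $\bar y_{k+1} - y_k$, which is precisely the claimed equality.

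For the inequality I would set $u = \bar x_{k+1} - x_k$ and $v = \bar y_{k+1} - y_k$ and use the update rules $x_{k+1} = \bar x_{k+1} - \tau A^\top(\bar y_{k+1} - y_k)$, $y_{k+1} = \bar y_{k+1}$ to get $x_{k+1} - x_k = u - \tau A^\top v$ and $y_{k+1} - y_k = v$, whence $\|z_{k+1} - z_k\|_V^2 = \frac1\tau\|u - \tau A^\top v\|^2 + \frac1\sigma\|v\|^2$. The statement then reduces to the pointwise inequality
\[
\frac{1-\alpha_f}{\tau}\|u\|^2 + \frac{1-\alpha_g-\gamma}{\sigma}\|v\|^2 \ge \lambda\Big(\tfrac1\tau\|u - \tau A^\top v\|^2 + \tfrac1\sigma\|v\|^2\Big),
\]
to be established for all $u \in \mathcal X$, $v \in \mathcal Y$. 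Expanding $\|u-\tau A^\top v\|^2$ and bounding the cross term by $2\langle u, A^\top v\rangle \le 2\|u\|\,\|A\|\,\|v\|$ together with $\|A^\top v\|^2 \le \|A\|^2\|v\|^2$ (both legitimate since $\lambda\ge 0$), I would reduce the claim to nonnegativity of a scalar quadratic form in $(\|u\|,\|v\|)$, that is, to positive semidefiniteness of a symmetric $2\times 2$ matrix whose entries, after using $\|A\|^2 = \gamma/(\sigma\tau)$, are $\frac{1-\alpha_f-\lambda}{\tau}$ and $\frac{1-\alpha_g-\gamma-\lambda(1+\gamma)}{\sigma}$ on the diagonal and $-\lambda\|A\|$ off-diagonal.

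The hard part, and really the only nonroutine step, will be to verify that the $\lambda$ of Lemma~\ref{lem:averaged_operator} is exactly the largest value for which this matrix stays positive semidefinite. The diagonal condition $\lambda \le 1-\alpha_f$ is immediate, while the determinant condition, after clearing $\sigma\tau$, is the scalar quadratic inequality $\lambda^2 - [(1-\alpha_f)(1+\gamma) + (1-\alpha_g-\gamma)]\lambda + (1-\alpha_f)(1-\alpha_g-\gamma) \ge 0$, and I would identify the stated $\lambda$ with its smaller root. Writing $A_1 = 1-\alpha_f$ and $A_2 = 1-\alpha_g-\gamma$, this amounts to checking that half the linear coefficient equals $1-\alpha_f-\frac{\alpha_g-(1-\gamma)\alpha_f}{2}$ and that the discriminant equals $4\big((1-\alpha_f)^2\gamma + ((1-\gamma)\alpha_f-\alpha_g)^2/4\big)$, the latter through the identity $(A_1+A_2)^2 - 4A_1A_2 = (A_1-A_2)^2$. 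I expect the bookkeeping of this root computation, rather than any conceptual difficulty, to be where the care is needed; one should also record that the smaller root indeed satisfies $\lambda \le 1-\alpha_f$, so that the diagonal condition is not the binding one.
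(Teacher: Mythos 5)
Your proposal is correct, but it takes a more self-contained route than the paper. The paper's own proof is a two-line specialization: the last part of the proof of Lemma~\ref{lem:averaged_operator} already establishes $\tilde V(z,z') \geq \frac{\lambda}{2}\|z - T(z) - z' + T(z')\|^2_V$ for \emph{arbitrary} $z, z'$, and the lemma follows by taking $z = z_k$, $z' = z^*$ and using $T(z^*) = z^*$, $T(z_k) = z_{k+1}$ (your fixed-point observation for the equality part is exactly what the paper uses implicitly). You instead re-derive the inequality from scratch in the fixed-point case, via Cauchy--Schwarz and positive semidefiniteness of a $2\times 2$ matrix, identifying $\lambda$ as the smaller root of $\lambda^2 - [A_1(1+\gamma)+A_2]\lambda + A_1A_2$ with $A_1 = 1-\alpha_f$, $A_2 = 1-\alpha_g-\gamma$. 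This is guaranteed to succeed because it is algebraically equivalent to the paper's construction: eliminating $\alpha$ from the system $\lambda/\alpha = 1-\alpha_f-\lambda$, $(1+\lambda+\lambda\alpha)\gamma = 1-\alpha_g-\lambda$ used in the proof of Lemma~\ref{lem:averaged_operator} yields precisely your quadratic, and the explicit formula for $\lambda$ is indeed its smaller root (your claimed discriminant value checks out, though the clean way to see it is $[A_1(1+\gamma)+A_2]^2 - 4A_1A_2 = [(1-\gamma)A_1 - A_2]^2 + 4\gamma A_1^2$, a completion of squares slightly more involved than the identity $(A_1+A_2)^2-4A_1A_2=(A_1-A_2)^2$ you cite). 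What each approach buys: the paper's proof is shorter and avoids redoing the computation; yours is standalone and makes transparent that $\lambda$ is the \emph{exact} threshold for which the quadratic form stays PSD (the determinant vanishes, so the bound is tight along one direction). Two small points you should record to close your argument: $\lambda \geq 0$ (needed for your Cauchy--Schwarz step; it follows since $q(0)=A_1A_2\ge 0$ and the sum of roots is positive in the regime $A_1, A_2 \geq 0$), and nonnegativity of the second diagonal entry, which follows from $\det = 0$ together with the first diagonal entry being positive.
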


\revisionone{As noted in~\cite{jiang2022unified}, the case $\alpha_f > \frac 12$ is not covered by most of the results in the literature on convergence speed results. We propose here an extension of results in the proof of \cite[Theorem 1]{chambolle2011first} that allows the larger step size range $0 \leq \alpha_f < 1$ where convergence is guaranteed.}

\begin{lemma}
	\label{lem:lag_ineq}
Suppose that $\gamma = \sigma \tau \|A\|^2 < 1$, \bl{$\tau L_f/2 = \alpha_f < 1$, $\alpha_g = \sigma L_{g^*}/2 < 1$}. For all $k \in \mathbb N$ and for all $z \in \mathcal Z$, 
\begin{align}
L(\bar x_{k+1}, y) - L(x, \bar y_{k+1})&\leq 
\frac 12 \|z - z_k\|^2_{V} -\frac 12 \|z - z_{k+1}\|^2_V  +\bl{\revisionone{a_2}} \tilde V(z_k, z^*) \label{ineq_on_lagrangians}
\end{align}
where $	\tilde V(z_k, z^*) = (\frac{1}{2\tau} -\frac{L_f}{2}) \|\bar x_{k+1} - x_k\|^2 + (\frac{1}{2\sigma} - \frac{\tau \|A\|^2}{2} - \frac{L_{g^*}}{2}) \|\bar y_{k+1} - y_k\|^2 $ and \bl{\revisionone{$a_2 = \max(\frac{2 \alpha_f - 1}{1-\alpha_f}, \frac{2\alpha_g-1+\gamma}{1-\alpha_g - \gamma})$. $a_2 \geq -1$ may be positive or negative}}.
\end{lemma}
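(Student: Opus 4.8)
The plan is to follow the classical Chambolle--Pock single-step argument, writing a descent inequality for the Lagrangian gap evaluated at the extrapolated iterate $(\bar x_{k+1}, \bar y_{k+1})$ against the arbitrary test point $z = (x,y)$. First I would apply the first inequality of Lemma~\ref{lem:prox} to the primal update $\bar x_{k+1} = \prox_{\tau f}(x_k - \tau\nabla f_2(x_k) - \tau A^\top y_k)$ with comparison point $x$, and symmetrically to the dual update $\bar y_{k+1} = \prox_{\sigma g^*}(y_k - \sigma\nabla g_2^*(y_k) + \sigma A\bar x_{k+1})$ with comparison point $y$, taking $\mu_f = \mu_{g^*} = 0$ since no strong convexity is assumed here. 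The gradient terms produced by the arguments of the proximal operators, namely $\langle \nabla f_2(x_k), x - \bar x_{k+1}\rangle$ and $\langle \nabla g_2^*(y_k), y - \bar y_{k+1}\rangle$, are handled by combining convexity of $f_2$ (resp.\ $g_2^*$) at $x_k$ with the descent lemma $f_2(\bar x_{k+1}) \le f_2(x_k) + \langle\nabla f_2(x_k), \bar x_{k+1}-x_k\rangle + \tfrac{L_f}{2}\|\bar x_{k+1}-x_k\|^2$. This converts them into the genuine function differences $f_2(\bar x_{k+1}) - f_2(x)$ and $g_2^*(\bar y_{k+1}) - g_2^*(y)$, at the cost of the error terms $\tfrac{L_f}{2}\|\bar x_{k+1}-x_k\|^2$ and $\tfrac{L_{g^*}}{2}\|\bar y_{k+1}-y_k\|^2$. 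Summing the two inequalities reconstructs exactly $L(\bar x_{k+1}, y) - L(x, \bar y_{k+1})$, and the coupling terms collapse to the single bilinear residual $\langle A(\bar x_{k+1}-x), \bar y_{k+1}-y_k\rangle$.

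Next I would turn the inner products coming from the proximal steps into squared norms via the polarization identity $\langle a-b, b-c\rangle = \tfrac12(\|a-c\|^2 - \|a-b\|^2 - \|b-c\|^2)$, which produces $\tfrac{1}{2\tau}\|x-x_k\|^2 + \tfrac{1}{2\sigma}\|y-y_k\|^2 = \tfrac12\|z-z_k\|_V^2$, together with $-\tfrac{1}{2\tau}\|x-\bar x_{k+1}\|^2$, $-\tfrac{1}{2\sigma}\|y-\bar y_{k+1}\|^2$ and the negative residuals $-\tfrac{1}{2\tau}\|\bar x_{k+1}-x_k\|^2$, $-\tfrac{1}{2\sigma}\|\bar y_{k+1}-y_k\|^2$. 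The crucial step is to invoke the correction $x_{k+1} = \bar x_{k+1} - \tau A^\top(\bar y_{k+1}-y_k)$ to rewrite $-\tfrac{1}{2\tau}\|x-\bar x_{k+1}\|^2$ in terms of $x_{k+1}$: this yields $-\tfrac{1}{2\tau}\|x-x_{k+1}\|^2$, plus a cross term that cancels the bilinear residual $\langle A(\bar x_{k+1}-x), \bar y_{k+1}-y_k\rangle$ exactly, plus a leftover $+\tfrac{\tau}{2}\|A^\top(\bar y_{k+1}-y_k)\|^2$. Since $y_{k+1} = \bar y_{k+1}$, the negative squared norms assemble into $-\tfrac12\|z-z_{k+1}\|_V^2$, which is the telescoping term in the statement.

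It remains to control the collected quadratic residual. Bounding $\tfrac{\tau}{2}\|A^\top(\bar y_{k+1}-y_k)\|^2 \le \tfrac{\tau\|A\|^2}{2}\|\bar y_{k+1}-y_k\|^2 = \tfrac{\gamma}{2\sigma}\|\bar y_{k+1}-y_k\|^2$ and gathering everything, the residual becomes $\tfrac{2\alpha_f-1}{2\tau}\|\bar x_{k+1}-x_k\|^2 + \tfrac{2\alpha_g+\gamma-1}{2\sigma}\|\bar y_{k+1}-y_k\|^2$, using $\tau L_f = 2\alpha_f$ and $\sigma L_{g^*} = 2\alpha_g$. I would then factor each block against the corresponding block of $\tilde V(z_k,z^*)$, writing $\tfrac{2\alpha_f-1}{2\tau} = \tfrac{2\alpha_f-1}{1-\alpha_f}\cdot\tfrac{1-\alpha_f}{2\tau}$ and $\tfrac{2\alpha_g+\gamma-1}{2\sigma} = \tfrac{2\alpha_g+\gamma-1}{1-\alpha_g-\gamma}\cdot\tfrac{1-\alpha_g-\gamma}{2\sigma}$; this exhibits the residual as a weighted sum of the two nonnegative blocks of $\tilde V$ (consistent with the version of $\tilde V$ appearing in Lemma~\ref{lem:tildev}) with weights $\tfrac{2\alpha_f-1}{1-\alpha_f}$ and $\tfrac{2\alpha_g+\gamma-1}{1-\alpha_g-\gamma}$. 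Replacing both weights by their maximum $a_2$ and using that these blocks are nonnegative, which holds precisely because $\alpha_f < 1$ and $\alpha_g+\gamma < 1$, gives the claimed bound $R \le a_2\,\tilde V(z_k,z^*)$. The main obstacle, and the point of departure from the usual analysis, is exactly this final comparison: when $\alpha_f \le \tfrac12$ the residual coefficient is nonpositive and can simply be dropped, but for $\tfrac12 < \alpha_f < 1$ it is positive and must be absorbed into $a_2\,\tilde V$, which forces the $\max$ form of $a_2$ and is what allows the step-size range to be extended all the way to $\alpha_f < 1$ rather than stopping at $\alpha_f \le \tfrac12$.
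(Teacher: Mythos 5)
Your proof is correct and follows essentially the same route as the paper's: prox inequalities (Lemma~\ref{lem:prox} with $\mu_f=\mu_{g^*}=0$) plus descent lemma and convexity for $f_2,g_2^*$, reassembly of $-\tfrac12\|z-z_{k+1}\|_V^2$ via the correction step $x_{k+1}=\bar x_{k+1}-\tau A^\top(\bar y_{k+1}-y_k)$ with exact cancellation of the bilinear term, and finally absorbing the residual $\tfrac{2\alpha_f-1}{2\tau}\|\bar x_{k+1}-x_k\|^2+\tfrac{2\alpha_g+\gamma-1}{2\sigma}\|\bar y_{k+1}-y_k\|^2$ into $a_2\,\tilde V(z_k,z^*)$ by factoring against the nonnegative blocks of $\tilde V$ and taking the maximum of the two weights. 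You also correctly identified that the statement's parenthetical formula for $\tilde V$ is inconsistent with the definition actually used in the proof (the one of Lemma~\ref{lem:tildev}), and your argument matches the paper's on this point.
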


\revisionone{The next proposition is adapted from Theorem 1 in \cite{chambolle2011first}. We shall show in Section~\ref{sec:restart} how to generalize it to $\tau L_f < 2$.}
\begin{proposition}
	\label{prop:convpdhg}
		Let $z_0 \in \mathcal Z$ and let $R \subseteq \mathcal Z$. If $\sigma \tau \|A\|^2 \bl{+ \sigma L_{g^*}} \leq 1$ \bl{ and $\tau L_f \leq 1$}  then we have the stability 
		\[
		\|z_k -z^*\|_V \leq \|z_0 - z^*\|_V
		\]
		for all $z^* \in \mathcal Z^*$.
Define $\tilde z_k = \frac 1k \sum_{l=1}^k \bar z_l$
		and the restricted duality gap 
		$G(\bar z, R) = \sup_{z \in R} L(\bar x, y) - L(x, \bar y)$.
	We have  the sublinear iteration complexity
		\[
		G(\tilde z_k, R) \leq \frac{1}{2k} \sup_{z \in R} \|z -z_0\|^2_V \; .
		\]
		\end{proposition}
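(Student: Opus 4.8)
The plan is to combine the per-iteration Lagrangian inequality from Lemma~\ref{lem:lag_ineq} with a telescoping argument, and to control the extra nonnegative term via the stability estimate. First I would verify the stability claim: under the stated step-size conditions $\sigma\tau\|A\|^2 + \sigma L_{g^*}\le 1$ and $\tau L_f\le 1$, we have $\alpha_g+\gamma\le 1-\alpha_g$ type bounds forcing $a_2\le 0$ in Lemma~\ref{lem:lag_ineq}. Plugging a saddle point $z=z^*$ into \eqref{ineq_on_lagrangians} makes the left-hand side $L(\bar x_{k+1},y^*)-L(x^*,\bar y_{k+1})\ge 0$ (by the saddle-point property), so $0\le \tfrac12\|z^*-z_k\|_V^2-\tfrac12\|z^*-z_{k+1}\|_V^2 + a_2\,\tilde V(z_k,z^*)$ with $a_2\le 0$ and $\tilde V(z_k,z^*)\ge 0$; this yields $\|z_{k+1}-z^*\|_V\le\|z_k-z^*\|_V$, and iterating gives the stated stability.

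Next I would sum \eqref{ineq_on_lagrangians} over $l=0,\dots,k-1$ at a fixed but arbitrary $z\in R$. The middle terms telescope to $\tfrac12\|z-z_0\|_V^2-\tfrac12\|z-z_k\|_V^2\le\tfrac12\|z-z_0\|_V^2$, and the remaining sum $a_2\sum_{l=0}^{k-1}\tilde V(z_l,z^*)$ is nonpositive since $a_2\le 0$ and each $\tilde V\ge 0$. Hence
\[
\sum_{l=0}^{k-1}\big(L(\bar x_{l+1},y)-L(x,\bar y_{l+1})\big)\le \tfrac12\|z-z_0\|_V^2 .
\]
The key remaining ingredient is convexity-concavity of $L$: since $x\mapsto L(x,y)$ is convex and $y\mapsto L(x,y)$ is concave, Jensen's inequality applied to the averaged iterate $\tilde z_k=\tfrac1k\sum_{l=1}^k\bar z_l$ gives $L(\tilde x_k,y)-L(x,\tilde y_k)\le \tfrac1k\sum_{l=1}^k\big(L(\bar x_l,y)-L(x,\bar y_l)\big)$. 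Combining with the summed bound yields $L(\tilde x_k,y)-L(x,\tilde y_k)\le\tfrac1{2k}\|z-z_0\|_V^2$ for the fixed $z$.

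Finally I would take the supremum over $z\in R$. Taking $\sup_{z\in R}$ on both sides and noting that the bound $\tfrac1{2k}\|z-z_0\|_V^2\le\tfrac1{2k}\sup_{z'\in R}\|z'-z_0\|_V^2$ holds uniformly, the left-hand supremum is exactly $G(\tilde z_k,R)$, giving the claimed $G(\tilde z_k,R)\le\tfrac1{2k}\sup_{z\in R}\|z-z_0\|_V^2$. The main subtlety I expect is the interchange of the supremum with the averaging/telescoping: one must apply the inequality pointwise in $z$ \emph{before} taking the supremum, so that $z$ is fixed throughout the Jensen step, and only at the end pass to $\sup_{z\in R}$; this is why the restricted gap is defined with the supremum outside. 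A secondary point to check carefully is that the sign condition on $a_2$ genuinely follows from the hypotheses of this proposition (which are stronger than those of Lemma~\ref{lem:lag_ineq}), ensuring the extra $\tilde V$ term can be discarded rather than needing to be bounded.
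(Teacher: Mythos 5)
Your proposal is correct and follows essentially the same route as the paper's own proof: stability comes from plugging $z=z^*$ into \eqref{ineq_on_lagrangians}, using the saddle-point inequality $L(\bar x_{k+1},y^*)-L(x^*,\bar y_{k+1})\geq 0$ together with $a_2\leq 0$ (which, as you note, follows from $\alpha_f\leq \tfrac12$ and $2\alpha_g+\gamma\leq 1$), and the sublinear bound comes from telescoping the summed inequality, applying convexity--concavity of $L$ to the averaged iterate, discarding the nonpositive $\tilde V$ terms, and only then passing to $\sup_{z\in R}$. The one point you flag as a subtlety, fixing $z$ throughout before taking the supremum, is indeed exactly how the paper proceeds, so there is nothing to correct.
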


\section{Linear convergence of PDHG}
\label{sec:linear}

In this section, we show that under the regularity assumptions stated in Section~\ref{sec:regularity}, the Primal-Dual Hybrid Gradient converges linearly. Most of the results were already known, we only improved slightly some constants. Hence, in this section also, we defer some of the proofs to Appendix~\ref{sec:proofs_linconv}.

We begin with a technical lemma showing that $\bar z_{k+1}$ is close to $z_{k+1}$.
\begin{lemma}
	\label{lem:bars2nobars}
	For $0 < \alpha \leq 1$, 
	\[\dist_V(\bar z_{k+1}, \mathcal Z^*)^2 \geq 
	(1-\alpha)\dist_V(z_{k+1}, \mathcal Z^*)^2 - (\alpha^{-1} - 1)\frac{1}{\sigma}\|y_{k+1} - y_{k}\|^2 \;.
	\]
\end{lemma}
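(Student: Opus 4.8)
The plan is to exploit the fact that $\bar z_{k+1}$ and $z_{k+1}$ differ only in their primal component. Indeed, from the definition of $T$ in \eqref{eq:defT} we have $y_{k+1} = \bar y_{k+1}$ and $\bar x_{k+1} = x_{k+1} + \tau A^\top(y_{k+1} - y_k)$. Writing $d = \tau A^\top(y_{k+1} - y_k)$, this gives $\bar z_{k+1} - z_{k+1} = (d, 0)$, so that for any fixed $z^* = (x^*, y^*) \in \mathcal Z^*$ the dual contributions to $\|\bar z_{k+1} - z^*\|_V^2$ and $\|z_{k+1} - z^*\|_V^2$ coincide and only the primal terms $\frac 1\tau\|\bar x_{k+1} - x^*\|^2$ and $\frac 1\tau\|x_{k+1} - x^*\|^2$ differ.

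First I would establish the per-point inequality
\[
\|\bar z_{k+1} - z^*\|_V^2 \geq (1-\alpha)\|z_{k+1} - z^*\|_V^2 - (\alpha^{-1} - 1)\tfrac{1}{\sigma}\|y_{k+1} - y_k\|^2
\]
for every $z^* \in \mathcal Z^*$. With $u = x_{k+1} - x^*$, expanding $\|\bar x_{k+1} - x^*\|^2 = \|u + d\|^2 = \|u\|^2 + 2\langle u, d\rangle + \|d\|^2$ and using that the dual components of $\bar z_{k+1}$ and $z_{k+1}$ agree, the inequality reduces, after discarding a nonnegative leftover term $\alpha\tfrac 1\sigma\|y_{k+1}-y^*\|^2$, to showing $\tfrac 1\tau(\alpha\|u\|^2 + 2\langle u, d\rangle + \|d\|^2) + (\alpha^{-1}-1)\tfrac 1\sigma\|y_{k+1}-y_k\|^2 \geq 0$. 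Young's inequality $2\langle u, d\rangle \geq -\alpha\|u\|^2 - \alpha^{-1}\|d\|^2$ cancels the $\|u\|^2$ terms and leaves $\tfrac 1\tau(1-\alpha^{-1})\|d\|^2$. Since $1-\alpha^{-1} \leq 0$ for $\alpha \leq 1$, I bound it from below via $\tfrac 1\tau\|d\|^2 = \tau\|A^\top(y_{k+1}-y_k)\|^2 \leq \tfrac{\gamma}{\sigma}\|y_{k+1}-y_k\|^2 \leq \tfrac 1\sigma\|y_{k+1}-y_k\|^2$, invoking the standing assumption $\gamma = \sigma\tau\|A\|^2 \leq 1$; this exactly offsets the $(\alpha^{-1}-1)\tfrac 1\sigma$ term and gives nonnegativity.

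Finally I would upgrade the per-point bound to distances by choosing $z^*$ to be the $V$-projection of $\bar z_{k+1}$ onto $\mathcal Z^*$, so that the left-hand side becomes exactly $\dist_V(\bar z_{k+1}, \mathcal Z^*)^2$. Since $z^* \in \mathcal Z^*$ yields $\|z_{k+1} - z^*\|_V^2 \geq \dist_V(z_{k+1}, \mathcal Z^*)^2$ and $1 - \alpha \geq 0$, the term $(1-\alpha)\|z_{k+1}-z^*\|_V^2$ dominates $(1-\alpha)\dist_V(z_{k+1}, \mathcal Z^*)^2$, which delivers the claimed inequality.

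The only delicate point is the direction of the inequalities in this last step: one must project $\bar z_{k+1}$, not $z_{k+1}$, so that the $\bar z$ distance is realized as an equality while the $z$ distance appears only as a harmless lower bound; projecting the other iterate would reverse the useful inequality. Everything else is the routine combination of the identity $\bar x_{k+1} = x_{k+1} + d$, one application of Young's inequality, and the step-size condition $\gamma \leq 1$.
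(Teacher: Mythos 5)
Your proof is correct and follows essentially the same route as the paper's: both decompose $\bar z_{k+1} - z^*$ using the primal-only discrepancy $\bar x_{k+1} - x_{k+1} = \tau A^\top(y_{k+1}-y_k)$, apply Young's inequality with parameter $\alpha$ to the cross term, bound $\tau\|A^\top(y_{k+1}-y_k)\|^2 \leq \frac{1}{\sigma}\|y_{k+1}-y_k\|^2$ via $\gamma = \sigma\tau\|A\|^2 \leq 1$, and crucially choose $z^* = P_{\mathcal Z^*}(\bar z_{k+1})$ so the left-hand side is an exact distance while the right-hand side is only lower-bounded. The only cosmetic difference is that you first isolate a per-point inequality valid for every $z^* \in \mathcal Z^*$ and then specialize, whereas the paper works with the projection from the start.
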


\begin{proof}
	\revisionone{We use the fact that for any $z$, $\|z_{k+1} - P_{\mathcal Z^*}(z)\|_V^2 \geq \dist_V(z_{k+1}, \mathcal Z^*)^2$ and Young's inequality to get}
	\begin{align*}
	&\dist_V(\bar z_{k+1}, \mathcal Z^*)^2 = \|\bar z_{k+1} - z_{k+1} + z_{k+1} - P_{\mathcal Z^*}(\bar z_{k+1})\|_V^2 \\
	&= \|z_{k+1} - P_{\mathcal Z^*}(\bar z_{k+1})\|_V^2 + \|\bar z_{k+1} - z_{k+1}\|_V^2 + 2 \langle z_{k+1} - P_{\mathcal Z^*}(\bar z_{k+1}), \bar z_{k+1} - z_{k+1}\rangle_\revisionone{V} \\ 
	&= \|z_{k+1} - P_{\mathcal Z^*}(\bar z_{k+1})\|_V^2 + \frac{1}{\tau}\|\bar x_{k+1} - x_{k+1}\|^2 + \frac{2}{\revisionone{\tau}} \langle x_{k+1} - P_{\mathcal X^*}(\bar x_{k+1}), \bar x_{k+1} - x_{k+1}\rangle \\
	&\geq \frac{1}{\sigma}\dist(y_{k+1}, \mathcal Y^*)^2 + \frac {1}{\tau}(1-\alpha)\dist(x_{k+1}, \mathcal X^*)^2 - \frac {1}{\tau}(\alpha^{-1} - 1)\|\bar x_{k+1} - x_{k+1}\|^2 \\
	&\geq (1-\alpha)\dist_V(z_{k+1}, \mathcal Z^*)^2 - \frac {1}{\tau}(\alpha^{-1} - 1)\|\bar x_{k+1} - x_{k+1}\|^2
	\end{align*}
	for all $\alpha \in (0,1)$. Since $\frac{1}{\tau}\|\bar x_{k+1} - x_{k+1}\|^2 = \tau \|A^\top (y_{k+1} - y_k)\|^2 \leq \frac{1}{\sigma}\|y_{k+1} - y_{k}\|^2$, we get the result of the lemma.
\end{proof}

The next proposition is a modification of \cite[Theorem 4]{fercoq2019coordinate} \revisionone{in order to allow $\alpha_f < 1$ instead of $\alpha_f \leq 1/2$. Here, we also concentrate on the deterministic version of PDHG.} We put the proof in the main text because the proof of Theorem~\ref{thm:rate_pdhg_qebsm} in Section~\ref{sec:qebsmlinear} will reuse some of the arguments.
\begin{proposition}
\label{prop:rate_strconvconc}
If $L$ is $\mu$-strongly convex concave in the norm $\|\cdot\|_V$, then the iterates of PDHG satisfy for all~$k$,
\[
(1+\frac{\mu}{\revisionone{(2+a_2)(1+\mu/\lambda)}})\|z_{k+1} - z^*\|^2_V \leq \|z_k -z^*\|_V^2
\] 
where $z^*$ is the unique saddle point of $L$, $\revisionone{a_2 = \max(\frac{2 \alpha_f  - 1}{1 - \alpha_f}, \frac{\gamma + 2 \alpha_g - 1}{1 - \gamma - \alpha_g})}$ and $\lambda$ is defined in Lemma~\ref{lem:averaged_operator}.
\end{proposition}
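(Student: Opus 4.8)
The plan is to feed the $\mu$-strong convexity--concavity into the Lagrangian gap inequality of Lemma~\ref{lem:lag_ineq} and then trade the intermediate point $\bar z_{k+1}$ for the genuine iterate $z_{k+1}$. First I would apply \eqref{ineq_on_lagrangians} at the (unique) saddle point $z=z^*$. Since $x^*$ minimises $L(\cdot,y^*)$ and $y^*$ maximises $L(x^*,\cdot)$, strong convexity--concavity in $\|\cdot\|_V$ makes the left-hand side of \eqref{ineq_on_lagrangians} at least $\frac{\mu}{2}\|\bar z_{k+1}-z^*\|_V^2$, giving
\[
\frac{\mu}{2}\|\bar z_{k+1}-z^*\|_V^2 + \frac12\|z_{k+1}-z^*\|_V^2 \le \frac12\|z_k-z^*\|_V^2 + a_2\,\tilde V(z_k,z^*).
\]

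Next I would remove the two discrepancies between this estimate and the target. For the intermediate point, strong convexity--concavity forces $\mathcal Z^*=\{z^*\}$, so Lemma~\ref{lem:bars2nobars} applies with $\dist_V(\cdot,\mathcal Z^*)=\|\cdot-z^*\|_V$ and yields, for any $\alpha\in(0,1)$,
\[
\|\bar z_{k+1}-z^*\|_V^2 \ge (1-\alpha)\|z_{k+1}-z^*\|_V^2 - (\alpha^{-1}-1)\tfrac1\sigma\|y_{k+1}-y_k\|^2 .
\]
The quantity $\tilde V(z_k,z^*)$ is then squeezed from below by $\frac{\lambda}{2}\|z_{k+1}-z_k\|_V^2\ge \frac{\lambda}{2\sigma}\|y_{k+1}-y_k\|^2$ (Lemma~\ref{lem:tildev}), which controls the residual of the Young step, and from above by $\frac12\big(\|z_k-z^*\|_V^2-\|z_{k+1}-z^*\|_V^2\big)$ (the nonexpansive estimate \eqref{eq:nonexpansive_with_vtilde} at $(z_k,z^*)$), which lets me dominate the error term $a_2\tilde V(z_k,z^*)$ by a multiple of the one-step decrease. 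After a short case distinction on the sign of the coefficient $a_2+\mu(\alpha^{-1}-1)/\lambda$ multiplying $\tilde V$ (when it is negative the term only helps and is dropped), substituting these bounds collapses everything into a relation between $\|z_k-z^*\|_V^2$ and $\|z_{k+1}-z^*\|_V^2$ of the form
\[
\Big(1+\frac{\mu(1-\alpha)}{1+a_2+\mu(\alpha^{-1}-1)/\lambda}\Big)\|z_{k+1}-z^*\|_V^2 \le \|z_k-z^*\|_V^2 ,
\]
valid for every admissible $\alpha$.

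Finally I would optimise the free parameter. Maximising $\alpha\mapsto \frac{\mu(1-\alpha)}{\,1+a_2+\mu(\alpha^{-1}-1)/\lambda\,}$ over $\alpha\in(0,1)$ is a one-variable calculus exercise whose maximiser is $\alpha^\star=\sqrt{\mu/\lambda}\,/\,(\sqrt{1+a_2}+\sqrt{\mu/\lambda})$, producing the rate $\mu/(\sqrt{1+a_2}+\sqrt{\mu/\lambda})^2$; the stated constant then follows from the elementary bound $(\sqrt{1+a_2}+\sqrt{\mu/\lambda})^2\le(2+a_2)(1+\mu/\lambda)$, which is nothing but $(\sqrt{(1+a_2)\mu/\lambda}-1)^2\ge0$. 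I expect the main obstacle to be the bookkeeping around $a_2$, which may be negative so that $a_2\tilde V$ helps rather than hurts while $1+a_2>0$ must be used to keep $\alpha^\star$ well defined, together with the careful passage from $\bar z_{k+1}$ to $z_{k+1}$: it is precisely the two-sided control of $\tilde V(z_k,z^*)$ (by $\|z_{k+1}-z_k\|_V^2$ below and by the decrease $\|z_k-z^*\|_V^2-\|z_{k+1}-z^*\|_V^2$ above) that must be balanced against the Young parameter in order not to lose the constant.
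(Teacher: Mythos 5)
Your skeleton is the same as the paper's: apply Lemma~\ref{lem:lag_ineq} at $z=z^*$, lower-bound the Lagrangian gap by $\frac{\mu}{2}\|\bar z_{k+1}-z^*\|_V^2$ via strong convexity--concavity, pass from $\bar z_{k+1}$ to $z_{k+1}$ with Lemma~\ref{lem:bars2nobars}, and absorb the residual terms using Lemma~\ref{lem:tildev} and \eqref{eq:nonexpansive_with_vtilde}. In fact, whenever the coefficient $c(\alpha):=a_2+\mu(\alpha^{-1}-1)/\lambda$ is nonnegative, your computation is the paper's in disguise: the paper's multiplier $a\ge\max(0,a_2)$ of \eqref{eq:nonexpansive_with_vtilde} equals your $c(\alpha)$, since the paper ties $a$ and $\alpha$ by $\mu(\alpha^{-1}-1)=\lambda(a-a_2)$; and on that range your optimized constant $\mu/(\sqrt{1+a_2}+\sqrt{\mu/\lambda})^2$ does dominate the stated $\mu/((2+a_2)(1+\mu/\lambda))$.

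The gap is that your master inequality is \emph{not} ``valid for every admissible $\alpha$''. When $c(\alpha)<0$, your own case distinction only allows you to drop the term $c(\alpha)\tilde V\le 0$, which yields the factor $1+\mu(1-\alpha)$; since $0<1+c(\alpha)<1$ (recall $a_2\ge-1$), this is strictly weaker than the claimed $1+\mu(1-\alpha)/(1+c(\alpha))$ --- you cannot buy the $1/(1+c)$ improvement, because \eqref{eq:nonexpansive_with_vtilde} bounds $\tilde V$ from \emph{above} and may only be added with a nonnegative multiple. This matters because your optimizer satisfies $c(\alpha^\star)=a_2+\sqrt{(1+a_2)\mu/\lambda}$, which is negative whenever $\sqrt{(1+a_2)\mu/\lambda}<-a_2$, i.e.\ precisely in the standard regime where $f_2,g_2$ vanish or are small (so $a_2$ is at or near $-1$) and $\mu$ is not large. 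There the claimed rate is not merely unproven but false: as $a_2\to-1$ your increment $\mu/(\sqrt{1+a_2}+\sqrt{\mu/\lambda})^2$ tends to $\lambda$, a contraction factor bounded away from $1$ uniformly in $\mu$, which would contradict the optimality of the $1-c\mu_L$ rate for strongly convex--concave problems recalled in Section~\ref{sec:strconv_coarse}. The repair is to optimize under the constraint $c(\alpha)\ge 0$, i.e.\ $\alpha\le\mu/(\mu-a_2\lambda)$ when $a_2<0$: if $\alpha^\star$ is feasible keep it, otherwise take the boundary point $\alpha_0$ with $c(\alpha_0)=0$, whose increment $\mu|a_2|\lambda/(\mu+|a_2|\lambda)$ can be checked (using $\sqrt{(1+a_2)\mu/\lambda}<-a_2 \Rightarrow (1+a_2)(1+\mu/\lambda)<1$) to still dominate $\mu/((2+a_2)(1+\mu/\lambda))$. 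This constrained optimization is exactly what the paper's choice $a=a_2+1$, i.e.\ $c=1+a_2\ge 0$, sidesteps; with it, your argument becomes a correct proof and is even slightly sharper than the paper's.
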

\begin{proof}
From Lemma~\ref{lem:lag_ineq} applied at $z = z^*$, we have 
\begin{align*}
L(\bar x_{k+1}, y^*) - L(x^*, \bar y_{k+1}) \leq 
\frac 12 \|z^* - z_k\|^2_V -\frac 12 \|z^* - z_{k+1}\|^2_V
\revisionone{+a_2}\tilde V(\bar z_{k+1} - z_k) \;.
\end{align*}
\revisionone{In order to deal with the case $a_2 \geq 0$, we add to this inequatity $a$ times \eqref{eq:nonexpansive_with_vtilde}, where $a \geq 0$, $z = z_k$ and $z' = z^*$}
\begin{align*}
L(\bar x_{k+1}, y^*) - L(x^*, \bar y_{k+1}) \leq 
\frac {1\revisionone{+a}}2 \|z^* - z_k\|^2_V -\frac {1\revisionone{+a}}2 \|z^* - z_{k+1}\|^2_V
\revisionone{+(a_2-a)}\tilde V(z_k, z^*) \;.
\end{align*}

Since $L$ is $\mu$-strongly convex-concave, $(x \mapsto L(x, y^*))$ is minimized at $x^*$ and  $(y \mapsto L(x^*, y))$ is minimized at $y^*$, we have
\[
L(\bar x_{k+1}, y^*) - L(x^*, \bar y_{k+1})  \geq \frac{\mu}{2}\|\bar x_{k+1} - x^*\|^2_{\tau^{-1}} + \frac{\mu}{2}\|\bar y_{k+1} - y^*\|^2_{\sigma^{-1}} \;.
\]
We combine these two inequalities with Lemma~\ref{lem:tildev} and Lemma~\ref{lem:bars2nobars} to get for all $\alpha \in (0,1)$ \revisionone{ and $a \geq \max(0, a_2)$}
\begin{align*}
(1 \revisionone{ +a } +\mu (1-\alpha)) \|z_{k+1} - z^*\|^2_V \leq \revisionone{(1+a)}\|z_k - z^*\|^2_V + \frac{1}{\sigma}(\mu (\alpha^{-1} - 1) - \lambda(a_2 - a)) \|y_{k+1} - y_k\|^2 \;.
\end{align*}
\revisionone{We then choose $\alpha = \frac{\mu}{\lambda(a-a_2)+\mu}$ so that $\mu(\alpha^{-1} - 1) = \lambda (a-a_2)$ and we choose  $a = a_2 + 1 \geq 0$. Thus
\[(2 +a_2 +\frac{\mu \lambda}{\mu + \lambda}) \|z_{k+1} - z^*\|^2_V \leq (2+a_2)\|z_k - z^*\|^2_V \;.\qedhere\] 
}
\end{proof}

We next study the second case where some primal-dual methods have been proved to have a linear rate of convergence \cite{du2019linear}, \cite[Theorem 1]{alghunaim2020linear}, \cite[Theorem 6.2]{salim2020dualize}, \revisionone{that is, minimizing a strongly convex objective under affine equality constraints. Here also, we pay attention to allow $1/2 < \alpha_f < 1$ in our proof.}

\begin{proposition}
	\label{prop:rate_strconv_equality}
	If $f + f_2$ has a $L_f'+L_f$-Lipschitz gradient and is $\mu_f$-strongly convex, and $g + g_2 = \iota_{\{b\}}$, then PDHG converges linearly with rate
\[
(1 + \frac{\eta}{\revisionone{(2+a_2)(1+\eta/\lambda)}} ) \dist_V(z_{k+1}, \mathcal Z^*)^2 \leq \dist_V(z_{k}, \mathcal Z^*)^2 
\]
where $\eta = \min(\mu_f\tau, \frac{\sigma\tau \sigma_{\min}(A)^2}{\tau L_f+ \tau L_f' + \frac{1}{\lambda}})$, \revisionone{$\lambda$ is defined in Lemma \ref{lem:averaged_operator} and $a_2 \geq -1$ is defined in Lemma~\ref{lem:lag_ineq}}.
\end{proposition}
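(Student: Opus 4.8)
The plan is to follow the same skeleton as the proof of Proposition~\ref{prop:rate_strconvconc}, replacing the dual strong concavity—unavailable here since $y\mapsto L(x,y)$ is affine—by an argument that recovers an effective dual regularity through $\sigma_{\min}(A)$. Throughout, fix $z^*=P_{\mathcal Z^*}(z_k)$; because $f+f_2$ is $\mu_f$-strongly convex the primal minimizer is unique, so $\mathcal Z^*=\{x^*\}\times\mathcal Y^*$ with $\mathcal Y^*$ an affine subspace, and the projection decouples into $x^*$ and $P_{\mathcal Y^*}(y_k)$. As in Proposition~\ref{prop:rate_strconvconc}, I would start from Lemma~\ref{lem:lag_ineq} taken at $z=z^*$ and add $a$ copies of \eqref{eq:nonexpansive_with_vtilde} with $a\geq\max(0,a_2)$, obtaining
\begin{equation*}
L(\bar x_{k+1},y^*)-L(x^*,\bar y_{k+1})\leq \tfrac{1+a}{2}\|z^*-z_k\|_V^2-\tfrac{1+a}{2}\|z^*-z_{k+1}\|_V^2+(a_2-a)\tilde V(z_k,z^*).
\end{equation*}

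Next I would lower bound the left-hand side. Strong convexity of $x\mapsto L(x,y^*)$, minimized at $x^*$, gives $L(\bar x_{k+1},y^*)-L(x^*,y^*)\geq\frac{\mu_f}{2}\|\bar x_{k+1}-x^*\|^2$. The essential difference with the strongly convex-concave case is that $y\mapsto L(x^*,y)$ is constant, since $Ax^*=b$ makes the coupling term vanish; hence $L(x^*,\bar y_{k+1})=L(x^*,y^*)$ and the gap only controls the primal distance, $L(\bar x_{k+1},y^*)-L(x^*,\bar y_{k+1})\geq\frac{\mu_f}{2}\|\bar x_{k+1}-x^*\|^2$. The dual distance is therefore invisible to the gap, and recovering it is the main obstacle.

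To control the dual I would exploit the optimality condition of the primal prox step. Writing $h=f+f_2$, differentiability of $h$ and the characterization of $\bar x_{k+1}$ give $\nabla h(\bar x_{k+1})=\frac1\tau(x_k-\bar x_{k+1})+\nabla f_2(\bar x_{k+1})-\nabla f_2(x_k)-A^\top y_k$, while any $y^*\in\mathcal Y^*$ satisfies $\nabla h(x^*)=-A^\top y^*$. Subtracting yields
\begin{equation*}
A^\top(y_k-y^*)=\tfrac1\tau(x_k-\bar x_{k+1})+\big(\nabla f_2(\bar x_{k+1})-\nabla f_2(x_k)\big)-\big(\nabla h(\bar x_{k+1})-\nabla h(x^*)\big).
\end{equation*}
Since every dual increment $\bar y_{k+1}-y_k=\sigma(A\bar x_{k+1}-b)$ lies in $\im A$ and $y^*=P_{\mathcal Y^*}(y_k)$, the vector $y_k-y^*$ is orthogonal to $\ker A^\top$, so $\|A^\top(y_k-y^*)\|\geq\sigma_{\min}(A)\|y_k-y^*\|$. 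Bounding the right-hand side with $\|\nabla h(\bar x_{k+1})-\nabla h(x^*)\|\leq(L_f+L_f')\|\bar x_{k+1}-x^*\|$ and $\|\nabla f_2(\bar x_{k+1})-\nabla f_2(x_k)\|\leq L_f\|\bar x_{k+1}-x_k\|$, then passing from $y_k$ to $\bar y_{k+1}=y_{k+1}$ by the triangle inequality, gives $\frac1\sigma\|\bar y_{k+1}-y^*\|^2$ up to the primal distance and the increments $\|\bar x_{k+1}-x_k\|^2$, $\|\bar y_{k+1}-y_k\|^2$. These increments are reabsorbed through Lemma~\ref{lem:tildev}, namely $\tilde V(z_k,z^*)\geq\frac{\lambda}{2}\|z_{k+1}-z_k\|_V^2$; this is precisely where the factor $1/\lambda$ and the denominator $\tau L_f+\tau L_f'+\frac1\lambda$ of $\eta$ appear. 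I expect the careful Young-inequality splitting needed to land exactly on $\eta=\min\big(\mu_f\tau,\frac{\sigma\tau\sigma_{\min}(A)^2}{\tau L_f+\tau L_f'+1/\lambda}\big)$ to be the most delicate part of the argument.

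Finally I would assemble the pieces into a lower bound of the form $L(\bar x_{k+1},y^*)-L(x^*,\bar y_{k+1})\geq\frac{\eta}{2}\|\bar z_{k+1}-z^*\|_V^2$ and then proceed verbatim as in Proposition~\ref{prop:rate_strconvconc}: use Lemma~\ref{lem:bars2nobars} to replace $\bar z_{k+1}$ by $z_{k+1}$ at the cost of $(\alpha^{-1}-1)\frac1\sigma\|y_{k+1}-y_k\|^2$, bound the latter by $\tilde V$ via Lemma~\ref{lem:tildev}, and optimize the free parameters by taking $\alpha=\frac{\eta}{\lambda(a-a_2)+\eta}$ and $a=a_2+1$. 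This produces the announced contraction $(1+\frac{\eta}{(2+a_2)(1+\eta/\lambda)})\dist_V(z_{k+1},\mathcal Z^*)^2\leq\dist_V(z_k,\mathcal Z^*)^2$, where $\dist_V(z_k,\mathcal Z^*)=\|z_k-z^*\|_V$ by the choice $z^*=P_{\mathcal Z^*}(z_k)$.
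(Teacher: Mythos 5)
Your skeleton (Lemma~\ref{lem:lag_ineq} plus $a$ copies of \eqref{eq:nonexpansive_with_vtilde}, then Lemma~\ref{lem:bars2nobars}, Lemma~\ref{lem:tildev}, and the choices $a=a_2+1$, $\alpha=\eta/(\lambda(a-a_2)+\eta)$) matches the paper's conclusion, and your observation that $L(x^*,\cdot)$ is constant so the gap at the test point $(x^*,y^*)$ sees only the primal distance is correct. But the mechanism you propose to recover the dual distance is not the paper's, and it cannot produce the stated constant $\eta$. You bound $\sigma_{\min}(A)\,\|y_k-y^*\|\leq\|A^\top(y_k-y^*)\|\leq(\tfrac1\tau+L_f)\|\bar x_{k+1}-x_k\|+(L_f+L_f')\|\bar x_{k+1}-x^*\|$ from the prox optimality condition. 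The increment terms can indeed be absorbed by $\tilde V$, but the term $(L_f+L_f')\|\bar x_{k+1}-x^*\|$ is the problem: to generate $\tfrac{\eta}{2\sigma}\|y_{k+1}-y^*\|^2$ you must spend the strong-convexity budget $\tfrac{\mu_f}{2}\|\bar x_{k+1}-x^*\|^2$ at the exchange rate $(L_f+L_f')^2/\sigma_{\min}(A)^2$. No Young splitting avoids this, and the best constant your route yields is of the form $\eta'\leq\mu_f\big(\tfrac1\tau+\tfrac{C(L_f+L_f')^2}{\sigma\sigma_{\min}(A)^2}\big)^{-1}$, i.e.\ roughly $\min\big(\mu_f\tau,\;\tfrac{\mu_f\,\sigma\sigma_{\min}(A)^2}{C(L_f+L_f')^2}\big)$. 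Compared with the stated $\eta=\min\big(\mu_f\tau,\;\tfrac{\sigma\tau\sigma_{\min}(A)^2}{\tau L_f+\tau L_f'+1/\lambda}\big)$, the second term is smaller by a factor of order $\mu_f/(L_f+L_f')$, so in the ill-conditioned regime your bound degrades by the condition number. This is a structural feature of routing the dual distance through the primal distance (essentially a metric-subregularity argument, cf.\ Proposition~\ref{prop:rate_msr}), not a matter of ``careful Young splitting''; your approach proves \emph{a} linear rate, but not the proposition as stated.

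The missing idea is that the paper never fixes the primal test point at $x^*$: it keeps $z=(x,y^*)$ free in Lemma~\ref{lem:lag_ineq} and chooses the displaced point $x=x^*+aA^\top(y^*-\bar y_{k+1})$. Because $g\square g_2=\iota_{\{b\}}$ makes the coupling affine and $\nabla(f+f_2)(x^*)=-A^\top y^*$, smoothness of $f+f_2$ gives
\begin{equation*}
-L\big(x^*+aA^\top(y^*-\bar y_{k+1}),\,\bar y_{k+1}\big)\;\geq\;-L(x^*,y^*)+\Big(a-\tfrac{a^2(L_f+L_f')}{2}\Big)\|A^\top(\bar y_{k+1}-y^*)\|^2\,,
\end{equation*}
so the dual quantity $\|A^\top(\bar y_{k+1}-y^*)\|^2\geq\sigma_{\min}(A)^2\dist(\bar y_{k+1},\mathcal Y^*)^2$ is generated directly by the coupling term, paid for only by smoothness. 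The displacement also perturbs the right-hand side $\tfrac12\|z-z_k\|_V^2-\tfrac12\|z-z_{k+1}\|_V^2$, producing a cross term $\tfrac a\tau\langle x_k-x_{k+1},A^\top(y^*-\bar y_{k+1})\rangle$ which Young's inequality turns into $\tfrac{\lambda}{2\tau}\|x_k-x_{k+1}\|^2+\tfrac{a^2}{2\tau\lambda}\|A^\top(\bar y_{k+1}-y^*)\|^2$; this is the true origin of the $1/\lambda$ in the denominator, and the $\|x_k-x_{k+1}\|^2$ piece is what $\tilde V$ absorbs. Optimizing $a=\tau/(\tau L_f+\tau L_f'+1/\lambda)$ gives the coefficient $\tfrac{\sigma\tau\sigma_{\min}(A)^2}{2(\tau L_f+\tau L_f'+1/\lambda)}$ on $\tfrac1\sigma\dist(\bar y_{k+1},\mathcal Y^*)^2$ while leaving $\tfrac{\mu_f}{2}\|\bar x_{k+1}-x^*\|^2$ untouched for the primal part: the two resources are decoupled, which is exactly why $\eta$ is a minimum of the two terms rather than the harmonic-mean-type quantity your argument produces.
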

\revisionone{Note that this does not contradict the lower bound of \cite{ouyang2021lower}. In \cite{ouyang2021lower}, the authors consider the setup where the number of iterations is smaller than the dimension of the problem and showed that the convergence is necessarily sublinear in the worst case. On the other hand, our result becomes useful after a number of iterations that may be large for ill-conditioned problems but is more optimistic.}

Finally, we will show that if the Lagrangian's generalized gradient is metrically sub-regular then PDHG converges linearly. Compared to \cite[\revisionone{Theorem 5}]{latafat2019new}, we obtain a rate where the dependence in the norm is directly taken into account in the definition of metric sub-regularity and does not appear explicitly in the rate.

\begin{proposition}
\label{prop:rate_msr}
	If $\tilde \partial L$ is metrically subregular at $z^*$ for $0$ for all $z^* \in \mathcal Z^*$ with constant $\eta>0$ in the norm $\|\cdot\|_V$, then $(I-T)$ is metrically subregular at $z^*$ for 0 for all $z^* \in \mathcal Z^*$ with constant \revisionone{bounded below by} $\frac{\eta}{\sqrt 3 \eta + (2 + 2\sqrt 3 \max(\alpha_f, \alpha_g))}$ and PDHG converges linearly with rate $\bigg(1 -\frac{\eta^2 \revisionone{\lambda}}{\Big(\sqrt 3 \eta + \big(2 + 2\sqrt 3 \max(\alpha_f, \alpha_g)\big)\Big)^2} \bigg)$.
\end{proposition}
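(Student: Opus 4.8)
The plan is to split the argument into two stages: first transfer the metric subregularity of $\tilde\partial L$ to the fixed-point residual operator $I-T$, and then feed this into the averaging inequality of Lemma~\ref{lem:averaged_operator} to obtain the contraction. The starting observation is that the fixed points of $T$ are exactly the saddle points, so $(I-T)^{-1}(0)=\mathcal Z^*$, and since $I-T$ is single-valued the first target is simply $\|z-T(z)\|_V \ge \eta'\dist_V(z,\mathcal Z^*)$ with $\eta'=\frac{\eta}{\sqrt 3\eta+(2+2\sqrt 3\max(\alpha_f,\alpha_g))}$.

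For the transfer I would fix $z=(x,y)$ near some $z^*\in\mathcal Z^*$, run one step to produce $\bar z$ and $z^+=T(z)$, and read off from the optimality conditions of the two proximal steps an explicit element $w\in\tilde\partial L(\bar z)$. Writing $\prox_{\tau f}$ and $\prox_{\sigma g^*}$ in subdifferential form gives $\frac1\tau(x-\bar x)-\nabla f_2(x)-A^\top y\in\partial f(\bar x)$ and $\frac1\sigma(y-\bar y)-\nabla g_2^*(y)+A\bar x\in\partial g^*(\bar y)$; adding $\nabla f_2(\bar x)+A^\top\bar y$ and $\nabla g_2^*(\bar y)-A\bar x$ respectively produces a $w=(w_x,w_y)\in(B+C+M)(\bar z)=\tilde\partial L(\bar z)$ whose linear part collapses, after using $x-x^+=(x-\bar x)+\tau A^\top(\bar y-y)$ and $y-y^+=y-\bar y$, to $w_x=\frac1\tau(x-x^+)+(\nabla f_2(\bar x)-\nabla f_2(x))$ and $w_y=\frac1\sigma(y-y^+)+(\nabla g_2^*(\bar y)-\nabla g_2^*(y))$. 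The Lipschitz hypotheses on $\nabla f_2$ and $\nabla g_2^*$, together with $L_f=2\alpha_f/\tau$ and $L_{g^*}=2\alpha_g/\sigma$, bound the correction terms by $2\max(\alpha_f,\alpha_g)\|z-\bar z\|_V$, so that in the norm dual to $\|\cdot\|_V$ one gets $\|w\|\le\|z-T(z)\|_V+2\max(\alpha_f,\alpha_g)\|z-\bar z\|_V$. A short computation using the same identities and $\gamma=\sigma\tau\|A\|^2<1$ gives $\|z-\bar z\|_V\le\sqrt 3\,\|z-T(z)\|_V$. Metric subregularity of $\tilde\partial L$ at a nearest saddle point, $\eta\dist_V(\bar z,\mathcal Z^*)\le\dist(\tilde\partial L(\bar z),0)\le\|w\|$, combined with $\dist_V(z,\mathcal Z^*)\le\dist_V(\bar z,\mathcal Z^*)+\|z-\bar z\|_V$, then assembles into $\eta\dist_V(z,\mathcal Z^*)\le\big(\sqrt 3\eta+(2+2\sqrt 3\max(\alpha_f,\alpha_g))\big)\|z-T(z)\|_V$, which is the announced subregularity of $I-T$.

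For the linear rate I would apply the averaging inequality~\eqref{averaged} of Lemma~\ref{lem:averaged_operator} with $z'=P_{\mathcal Z^*}(z)$, a fixed point of $T$, so that $z'-T(z')=0$ and hence $\dist_V(T(z),\mathcal Z^*)^2\le\|T(z)-z'\|_V^2\le\dist_V(z,\mathcal Z^*)^2-\lambda\|z-T(z)\|_V^2$. Substituting the subregularity bound $\|z-T(z)\|_V^2\ge(\eta')^2\dist_V(z,\mathcal Z^*)^2$ yields $\dist_V(T(z),\mathcal Z^*)^2\le(1-\lambda(\eta')^2)\dist_V(z,\mathcal Z^*)^2$, i.e.\ exactly the rate $1-\frac{\eta^2\lambda}{(\sqrt 3\eta+(2+2\sqrt 3\max(\alpha_f,\alpha_g)))^2}$.

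The delicate points are twofold. First, the bookkeeping of the two norms (distances measured in $\|\cdot\|_V$, gradients in its dual) together with the cancellation of the $A^\top$ terms forced by the correction $x^+=\bar x-\tau A^\top(\bar y-y)$, which is precisely what keeps the constant free of $\|A\|$; tracking the exact numerical factor $2+2\sqrt 3\max(\alpha_f,\alpha_g)$ through the triangle inequalities is routine but error-prone and is where most of the care is needed. Second, metric subregularity is only a local property, so one must invoke the Fej\'er monotonicity $\dist_V(z_{k+1},\mathcal Z^*)\le\dist_V(z_k,\mathcal Z^*)$ from Lemma~\ref{lem:averaged_operator} to guarantee that the whole trajectory remains in the neighborhood where the residual estimate is valid before concluding global linear convergence.
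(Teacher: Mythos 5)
Your proof is correct and follows essentially the same route as the paper's appendix proof: you extract the same element of $\tilde \partial L(\bar z)$ from the two proximal optimality conditions, bound its $\|\cdot\|_{V*}$-norm by the fixed-point residual plus the Lipschitz correction $2\max(\alpha_f,\alpha_g)\|z-\bar z\|_V$, use $\|z-\bar z\|_V\le\sqrt 3\,\|(I-T)(z)\|_V$ and a triangle inequality on distances to get subregularity of $I-T$, and then combine with \eqref{averaged} at $z'=P_{\mathcal Z^*}(z)$ together with Fej\'er monotonicity, exactly as the paper does. The only substantive difference is that your exact identity $w_x=\frac1\tau(x-x^+)+\nabla f_2(\bar x)-\nabla f_2(x)$, $w_y=\frac1\sigma(y-y^+)+\nabla g_2^*(\bar y)-\nabla g_2^*(y)$ shows the linear part of $w$ has $\|\cdot\|_{V*}$-norm equal to $\|(I-T)(z)\|_V$, whereas the paper bounds the corresponding term through the operator norm $\|(H-M)(DH)^{-1}\|\le 2$; your steps therefore yield the slightly sharper denominator $\sqrt 3\eta+1+2\sqrt 3\max(\alpha_f,\alpha_g)$, which is fully consistent with the proposition since it only asserts that the subregularity constant is bounded below by the stated value.
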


\section{Coarseness of the analysis}
\label{sec:coarse}

\subsection{Strongly convex-concave Lagrangian}

\label{sec:strconv_coarse}

Suppose that $f$ is $\mu_f$ strongly convex and that $g^*$ is $\mu_{g^*}$ strongly convex. Then $L$ is $\mu_L$ strongly convex in the norm $\|\cdot\|_V$ with $\mu_L = \min(\mu_f \tau, \mu_{g^*}\sigma)$.
Note that in this case, the objective is the sum of the differentiable term $g(Ax)$ and the strongly convex proximable term $f(x)$. 
We have seen that this implies a linear rate of convergence for PDHG
with rate $(1-c\mu_L)$ with $c$ close to 1.
We may wonder what is the choice of $\tau$ and $\sigma$ that leads to the best rate.

We need $\mu_L = \min(\mu_f \tau, \mu_{g^*}\sigma)$ the largest possible and $\sigma \tau \|A\|^2 \leq 1$. Hence, we take $\tau = \sqrt{\tfrac{\mu_{g^*}}{\mu_f}}\tfrac{1}{\|A\|}$ and
$\sigma = \sqrt{\tfrac{\mu_f}{\mu_{g^*}}} \tfrac{1}{\|A\|}$. We do have $\sigma \tau \|A\|^2 \leq 1$ and also $\eta = \tfrac{\sqrt{\mu_f \mu_{g^*}}}{\|A\|}$. This rate is optimal for this class of problem~\cite{nesterov2018lectures}, which is noticeable.

We have seen in Proposition~\ref{prop:strconv2msr} that \revisionone{having
a strongly convex concave Lagrangian implies the
metric sub-regularity of the Lagrangian's gradient}. However, applying Proposition~\ref{prop:rate_msr} with $\eta = \mu_L$
leads to a rate equal to $(1-c \mu_L^2)$ which is much worse than what we can show using the more specialized assumption.
This means that metric sub-regularity applies to more problems but is not a more general assumption because it leads to a coarser analysis.

\subsection{Quadratic problem}
\label{sec:toy_quadratic}

We consider the toy problem
\begin{align*}
\min_{x\in \mathbb R} \;&\frac \mu 2 x^2 \\
& ax = b
\end{align*}
where $a, b \in \mathbb R$ and $\mu \geq 0$.

The Lagrangian is given by $L(x, y) = \frac \mu 2 x^2 + y (a x - b)$. Its gradient is
$\nabla L(x,y) = [\mu x + a y, a x -b]$. Since $\nabla L$ is affine, \revisionone{we can see using an eigenvalue decomposition} that $\nabla L$ is globally metrically sub-regular with constant $\frac{\sqrt{\mu^2\tau^2 + 4\sigma \tau a^2} - \mu \tau}{2}$ in the norm $\|\cdot\|_V$. \revisionone{We can also do a direct calculation. For all $\alpha > 0$ and the unique primal-dual optimal pair $x^*$, $y^*$, 
\begin{align*}
\|\nabla L(x,y)\|_{V*}^2 &= \tau \|\mu x + ay\|^2 + \sigma \|ax - b\|^2 = 
\tau \|\mu x - \mu x^* + ay - a y^*\|^2 + \sigma \|ax - ax^*\|^2 \\
& =(\tau \mu^2 + \sigma a^2) \|x - x^*\|^2 + \tau a^2 \|y-y^*\|^2 + 2 \tau \mu a\langle x - x^*, y-y^*\rangle \\
& \geq (\tau^2 \mu^2 + \sigma \tau a^2 - \tau^2\mu a \alpha) \frac 1\tau \|x - x^*\|^2 + (\sigma\tau a^2 - \mu \sigma \tau a \alpha^{-1}) \frac 1 \sigma\|y-y^*\|^2 \;.
\end{align*}
We choose $\alpha > 0$ such that $\tau^2 \mu^2 + \sigma \tau a^2 - \tau^2\mu a \alpha = \sigma\tau a^2 - \mu \sigma \tau a\alpha^{-1}$, that is $\alpha = \frac{\tau \mu + \sqrt{\tau^2\mu^2 + 4 \sigma \tau a^2}}{2 \tau a}$, which leads to
\[
\|\nabla L(x,y)\|_{V*}^2 \geq \big(\frac{\tau^2\mu^2}{2} + \sigma \tau a^2 - \frac{\tau\mu}{2} \sqrt{\tau^2\mu^2 + 4 \sigma \tau a^2}\Big) \|z - z^*\|^2 = \Big(\frac{\sqrt{\mu^2\tau^2 + 4\sigma \tau a^2} - \mu \tau}{2}\Big)^2\|z - z^*\|^2 \;.
\]
}

Let us now try to solve this (trivial) problem using PDHG:
\begin{align*}
&\bar x_{k+1} = x_k - \tau (\mu x_k + a y_k) \\
&\bar y_{k+1} = y_k - \sigma (b - a \bar x_{k+1}) \\
& x_{k+1} = \bar x_{k+1} - \tau a (\bar y_{k+1} - y_k) \\
& y_{k+1} = \bar y_{k+1}
\end{align*}
This can be written $z_{k+1} - z_* = R (z_{k}- z*)$ for 
\[
R = \begin{bmatrix}
(1-\sigma \tau a^2)(1-\tau \mu) &  - \tau a (1 - \sigma \tau a^2) \\
\sigma a(1 - \tau \mu) & (1 - \sigma \tau a^2)

\end{bmatrix}
\]
Hence, we can compute the exact rate of convergence, which is given by the largest eigenvalue of $R$ different from 1.

We shall compare this actual rate with what is predicted by Proposition \ref{prop:rate_msr}, that is
$\bigg(1 -\frac{\eta^2 \lambda}{\Big(\sqrt 3 \eta + \big(2 + 2\sqrt 3 \max(\alpha_f, \alpha_g)\big)\Big)^2} \bigg)$
where $\lambda$, $\gamma = \sigma \tau a^2$, $\alpha_g = 0$, $\alpha_f = \tau\mu/2$ and $\eta = \frac{\sqrt{\mu^2\tau^2 + 4\sigma \tau a^2} - \mu \tau}{2}$ and what is predicted by Proposition~\ref{prop:rate_strconv_equality}, \revisionone{that is $(1+\frac{\eta'}{(2+a_2)(1+\eta'/\lambda})^{-1}$ where $2+a_2 = \frac{1}{ 1 - \tau \mu_f / 2}$ and $\eta' = \min(\mu_f\tau, \frac{\sigma\tau \sigma_{\min}(A)^2}{\tau L_f+ \tau L_f' + \frac{1}{\lambda}})$. }
\begin{figure}
	\centering
	
\includegraphics[width=0.6\linewidth]{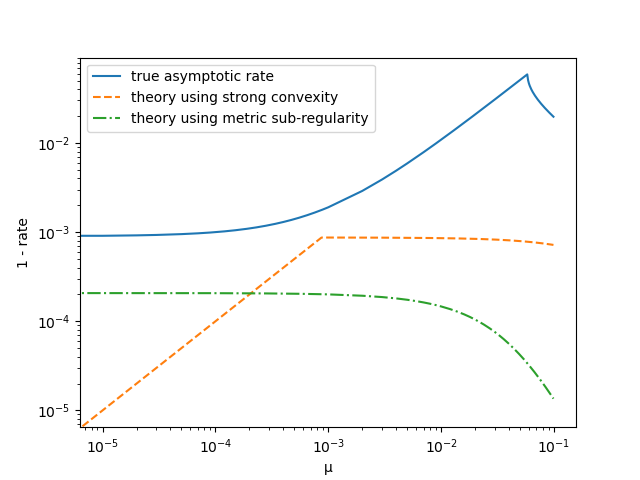}
\caption{Comparison of the true rate (line above) and what is predicted by theory (2 lines below) for $a = 0.03$, $\tau = \sigma=1$ and various values for $\mu$.}
\label{fig:comp_rates_theory}
\end{figure}
On Figure~\ref{fig:comp_rates_theory}, we can see that there can be a large difference between what is predicted and what is observed, even for the simplest problem. Moreover, although the actual rate improves when $\mu$ increases, metric sub-regularity decreases, so that theory suggests the opposite of what is actually observed.
On the other hand, using strong convexity explains the improvement of the rate when $\mu$ increases but does not manage to capture the linear convergence for $\mu = 0$.

\section{Quadratic error bound of the smoothed gap}
\label{sec:qebsm}

We now introduce a new regularity assumption that truly generalized strongly convex-concave Lagrangians and smooth strongly convex objectives with linear constraints and is as broadly applicable as metric subregularity of the Lagrangian's gradient.

\subsection{Main assumption}

\begin{definition}
	Given $\beta = (\beta_x, \beta_y) \in [0, +\infty]^2$, $z \in \mathcal Z$ and $\dot z \in \mathcal Z$, the smoothed gap $G_\beta$ is the function defined by
	\[
	G_\beta(z; \dot z) = \sup_{z' \in \mathcal Z} L(x, y') - L(x', y) - \frac{\beta_x}{2\tau} \|x' -\dot x\|^2 - \frac{\beta_y}{2\sigma} \|y' -\dot y\|^2 \;.
	\]
We call the function $(z \mapsto G_\beta(z, \dot z))$ the smoothed gap centered at $\dot z$.
\end{definition}

Although the smooth gap can be defined for any center $\dot z$, the next proposition shows that if $\dot z = z^* \in \mathcal Z^*$, then the smoothed gap is a measure of optimality.
\begin{proposition}
Let $\beta \in [0, +\infty)^2$. If $z^* \in \mathcal Z^*$, then $z \in \mathcal Z^* \Leftrightarrow G_\beta(z; z^*) = 0$.
\label{prop:sm_opt_meas}
\end{proposition}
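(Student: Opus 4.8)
The plan is to exploit the fact that the expression inside the supremum splits additively into a part depending on $x'$ and a part depending on $y'$, so that $G_\beta$ decomposes as a difference of two separate optimization values. Writing $\Phi = f + f_2$ and $\Psi = g^* + g_2^*$, so that $L(x,y) = \Phi(x) + \langle Ax, y\rangle - \Psi(y)$, I would first record
\[
G_\beta(z; z^*) = U(x) - D(y),
\]
where $U(x) = \sup_{y'}\big[L(x,y') - \tfrac{\beta_y}{2\sigma}\|y'-y^*\|^2\big]$ and $D(y) = \inf_{x'}\big[L(x',y) + \tfrac{\beta_x}{2\tau}\|x'-x^*\|^2\big]$. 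Since $z^*$ is a saddle point, evaluating at $y'=y^*$ and $x'=x^*$ gives the sandwich $U(x) \ge L(x,y^*) \ge v$ and $D(y) \le L(x^*,y) \le v$, where $v = L(x^*,y^*)$; in particular $G_\beta(z;z^*) \ge L(x,y^*) - L(x^*,y) \ge 0$ for every $z$.

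For the easy implication ($z \in \mathcal Z^* \Rightarrow G_\beta = 0$), I would use that a saddle point $z$ satisfies $L(x,y') \le L(x,y) \le L(x',y)$ for all $x',y'$, so after dropping the nonnegative penalties the bracket in the supremum is $\le L(x,y')-L(x',y) \le 0$; hence $G_\beta(z;z^*)\le 0$, and the sandwich above forces equality.

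For the converse, $G_\beta(z; z^*) = 0$ together with the sandwich forces $U(x) = D(y) = v$. From $U(x) = v$ I would extract two facts: evaluating at $y'=y^*$ gives $L(x,y^*) = v$, which, since $-A^\top y^* \in \partial\Phi(x^*)$ (optimality of $z^*$) and the subgradient inequality is then tight at $x$, propagates to $-A^\top y^* \in \partial\Phi(x)$; and the inequality $L(x,y') \le v + \tfrac{\beta_y}{2\sigma}\|y'-y^*\|^2$ shows the concave map $y'\mapsto L(x,y')$ is touched from above at $y^*$ by a parabola with zero gradient there, so $y^*$ maximizes it and $Ax \in \partial\Psi(y^*)$. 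Symmetrically, $D(y)=v$ yields $-A^\top y \in \partial\Phi(x^*)$ and $Ax^* \in \partial\Psi(y)$.

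The crux is then to upgrade these crossed optimality conditions into the genuine ones for $(x,y)$. Applying monotonicity of $\partial\Psi$ to $Ax \in \partial\Psi(y^*)$ and $Ax^* \in \partial\Psi(y)$ gives $\langle A(x-x^*), y-y^*\rangle \le 0$, while monotonicity of $\partial\Phi$ applied to $-A^\top y^* \in \partial\Phi(x)$ and $-A^\top y \in \partial\Phi(x^*)$ gives $\langle A(x-x^*), y-y^*\rangle \ge 0$; hence this cross term vanishes. Feeding this back into the two subgradient inequalities for $\Psi$ (resp. $\Phi$) shows each holds with equality at $y$ (resp. at $x$), which propagates the subgradient and yields $Ax \in \partial\Psi(y)$ and $-A^\top y \in \partial\Phi(x)$, i.e. exactly $0 \in \tilde\partial L(z)$, so $z \in \mathcal Z^*$. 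I expect this final monotonicity-plus-propagation step to be the main obstacle, since it is where the full strength of $G_\beta = 0$ is genuinely used: the mere equalities $L(x,y^*)=L(x^*,y)=v$ are \emph{not} sufficient (as the bilinear example $L(x,y)=xy$ shows), and only the supremum/infimum information closes the argument. The boundary cases $\beta_x=0$ or $\beta_y=0$ would need a brief separate check, where the parabola degenerates to a constant and the touching argument simply becomes the statement that $y^*$ (resp. $x^*$) is an exact maximizer (resp. minimizer).
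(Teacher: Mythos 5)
Your proof is correct, and its converse half takes a genuinely different route from the paper's --- one that is, in fact, more careful. The paper splits the supremum the same way and uses the $\beta$-strong concavity of the smoothed subproblems to deduce from $G_\beta(z;z^*)=0$ that $y_\beta(x)=y^*$ and $x_\beta(y)=x^*$, i.e.\ exactly your two \emph{crossed} inclusions $Ax\in\partial\Psi(y^*)$ and $-A^\top y\in\partial\Phi(x^*)$; it then concludes by asserting the equivalences $Ax\in\partial\Psi(y^*)\Leftrightarrow x\in\mathcal X^*$ and $-A^\top y\in\partial\Phi(x^*)\Leftrightarrow y\in\mathcal Y^*$. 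Only the ``$\Leftarrow$'' direction of those equivalences holds in general: take $f(x)=|x|$, $f_2=0$, $g^*=\iota_{\{0\}}$, $g_2^*=0$, $A=\mathrm{Id}$ on $\mathbb R$, so that $\mathcal Z^*=\{(0,0)\}$ and $\partial g^*(0)=\mathbb R$; then \emph{every} $x$ satisfies the crossed inclusion although $\mathcal X^*=\{0\}$, and the point $z=(1,0)$ satisfies both crossed inclusions while $G_\beta(z;z^*)=1$. So the crossed inclusions alone cannot close the argument, and your insistence that more of the strength of $G_\beta=0$ must be used is exactly right: the decisive extra information is the pair of equalities $L(x,y^*)=L(x^*,y)=v$, which the paper's own chain of inequalities produces and then discards (when it lower-bounds $L(x,y^*)$ by $L(x^*,y^*)$), and which give your two direct inclusions $-A^\top y^*\in\partial\Phi(x)$ and $Ax^*\in\partial\Psi(y)$. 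With all four inclusions in hand, your monotonicity-plus-tightness-propagation step is airtight, and your separate treatment of $\beta_x=0$ or $\beta_y=0$ also quietly fixes the fact that the paper's prox formula is meaningless there. Two remarks on economy: your final step can be shortened by noting that the direct-plus-crossed inclusions at $(x,y^*)$ say precisely that $(x,y^*)$ is a saddle point, and likewise for $(x^*,y)$, whence $z\in\mathcal X^*\times\mathcal Y^*=\mathcal Z^*$ by the standard product structure of saddle-point sets --- your monotonicity argument essentially re-proves that lemma inline; conversely, what the paper's (repairable) shortcut buys is brevity and a prox characterization of $y_\beta(x)$ that is reused elsewhere in the paper, while what yours buys is a proof that is actually complete.
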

\begin{proof}
We first remark that $G_0(z, z^*)$ is the usual duality gap and that $G_\infty(z; z^*) = L(x, y^*) - L(x^*, y) \geq 0$. 
Moreover, $G_0(z, z^*) \geq G_\beta(z, z^*) \geq G_\infty(z; z^*) \geq 0$. Since $z \in \mathcal Z^* \Rightarrow G_0(z; z^*) = 0$, we have the implication $z \in \mathcal Z^* \Rightarrow G_\beta(z; z^*) = 0$.

For the converse implication, we denote 
\begin{align*}
y_\beta(x) &= \arg\max_{y'} L(x, y') - \frac{\beta_y}{2 \sigma}\|y^* - y'\|^2 = 
\arg\max_{y'} \langle Ax, y'\rangle - g^*(y') - g_2^*(y') - \frac{\beta_y}{2 \sigma}\|y^* - y'\|^2  \\
& = \prox_{\sigma/\beta_y (g^* + g_2^*) }\big(y^* + \frac{\sigma}{\beta}Ax\big)
\end{align*}

By the strong convexity of the problem defining $G_\beta(\cdot; z^*)$, we know that 
\begin{align*}
\sup_{y'} L(x, y') - \frac{\beta_y}{2 \sigma}\|y^* - y'\|^2 &\geq L(x, y^*) - \frac{\beta_y}{2 \sigma}\|y^* - y^*\|^2 + \frac{\beta_y}{2\sigma}\|y_\beta(x) - y^*\|^2 \geq L(x^*, y^*) + \frac{\beta_y}{2\sigma}\|y_\beta(x) - y^*\|^2 \;.
\end{align*}
With a similar argument for $x_\beta(y)$, we get
\[
G_\beta(z; z^*) \geq \frac{\beta_y}{2 \sigma} \|y_\beta(x) - y^*\|^2 + \frac{\beta_x}{2 \tau} \|x_\beta(y) - x^*\|^2\;.
\]
Thus, if $G_\beta(z; z^*) = 0$, then $y_\beta(x) = y^*$ and $x_\beta(y) = x^*$.
\begin{align*}
y_\beta(x) = y^* &\Leftrightarrow y^* = \prox_{\sigma/\beta_y (g^* + g_2^*) }\big(y^* + \frac{\sigma}{\beta_y}Ax\big) \\
&\Leftrightarrow 0 \in y^* - (y^* + \frac{\sigma}{\beta_y}Ax ) + \frac{\sigma}{\beta_y}\partial g^*(y^*) + \frac{\sigma}{\beta_y}\nabla g_2^*(y^*) \\
& \Leftrightarrow 0 \in -Ax + \partial g^*(y^*) + \nabla g_2^*(y^*) \Leftrightarrow x \in \mathcal X^*
\end{align*}
and similarly $x_\beta(y) = x^* \Leftrightarrow y \in \mathcal Y^*$, which completes the proof of the proposition.
\end{proof}

\begin{assumption}
	\label{ass:qeb}
	There exists $\beta = (\beta_x, \beta_y) \in ]0, +\infty]^2$, $\eta > 0$ and a region $\mathcal R \subseteq \mathcal Z$ such that for all $z^* \in \mathcal Z^*$, $G_\beta(\cdot, z^*)$ has a quadratic error bound with constant $\eta$ in the region $\mathcal R$ and with the norm $\|\cdot\|_V$. Said otherwise, for all $z \in \mathcal R$,
\begin{align*}
	G_\beta(z;z^*) \geq \frac{\eta}{2} \dist_V(z, \mathcal Z^*)^2 \;.
\end{align*}
\end{assumption}

The next proposition, which is a simple consequence of \cite[Prop. 1]{fercoq2020restarting} says that even though QEB is a local concept, it can be extended to any compact set at the expense of degrading the constant.
\begin{proposition}
	If $G_\beta(\cdot, z^*)$ has a $\eta$-QEB on $\{z: \dist(z, \mathcal Z^*)_V < a\}$ then for all $M >1$, $G_\beta(\cdot, z^*)$ has a $\frac \eta M$-QEB on $\{z: \dist(z, \mathcal Z^*)_V < Ma\}$.
\end{proposition}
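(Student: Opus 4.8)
The plan is to recognize the claim as an instance of the general rescaling principle for quadratic error bounds of convex functions (this is exactly \cite[Prop.~1]{fercoq2020restarting}), applied to $h := G_\beta(\cdot; z^*)$, and then to verify that $h$ satisfies the hypotheses of that principle. So the first step is to record the structural properties of the smoothed gap. For each fixed $z' = (x',y')$ the map $z = (x,y) \mapsto L(x,y') - L(x',y) - \frac{\beta_x}{2\tau}\|x'-x^*\|^2 - \frac{\beta_y}{2\sigma}\|y'-y^*\|^2$ is convex in $z$, because $x \mapsto L(x,y')$ is convex and $y \mapsto -L(x',y)$ is convex while the remaining term does not depend on $z$; taking the supremum over $z'$ preserves convexity, so $h$ is convex. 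By Proposition~\ref{prop:sm_opt_meas} we also have $h \geq 0$ with $h(z)=0 \iff z \in \mathcal Z^*$, so $\mathcal Z^* = \arg\min h$ and $\min h = 0$. Finally $\mathcal Z^*$ is closed and convex (being the set of saddle points of a convex--concave function, it is a product of convex sets), which makes the metric projection $P_{\mathcal Z^*}$ well defined in the Hilbert norm $\|\cdot\|_V$.

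With these facts in hand the second step is a one-line scaling argument. Fix $z$ with $d := \dist_V(z, \mathcal Z^*) < Ma$; we may assume $d > 0$. Let $z^\dagger = P_{\mathcal Z^*}(z)$ and set $w = z^\dagger + \tfrac1M (z - z^\dagger)$, so $w$ lies on the segment $[z^\dagger, z]$ with $\|w - z^\dagger\|_V = d/M < a$. I would then establish $\dist_V(w, \mathcal Z^*) = d/M$, apply the assumed $\eta$-QEB at the interior point $w$ to get $h(w) \geq \tfrac\eta2 (d/M)^2$, and combine this with convexity together with $h(z^\dagger)=0$,
\[
h(w) = h\big(\tfrac1M z + (1-\tfrac1M) z^\dagger\big) \leq \tfrac1M h(z),
\]
to conclude $h(z) \geq M\, h(w) \geq \tfrac{\eta}{2M} d^2 = \tfrac{\eta/M}{2}\dist_V(z,\mathcal Z^*)^2$, which is the desired $\tfrac{\eta}{M}$-QEB on $\{z : \dist_V(z,\mathcal Z^*) < Ma\}$.

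The only step that is not pure bookkeeping is the identity $\dist_V(w, \mathcal Z^*) = d/M$, and this is where I expect the real content to sit. The inequality $\dist_V(w,\mathcal Z^*) \leq \|w - z^\dagger\|_V = d/M$ is immediate, but the QEB at $w$ is only useful together with the matching lower bound $\dist_V(w, \mathcal Z^*) \geq d/M$; otherwise $h(w) \geq \tfrac\eta2 \dist_V(w,\mathcal Z^*)^2$ could be vacuously small. I would get the lower bound from convexity of $\mathcal Z^*$: since $w - z^\dagger = \tfrac1M(z - z^\dagger)$, the projection inequality $\langle z - z^\dagger, c - z^\dagger\rangle_V \leq 0$ valid for all $c \in \mathcal Z^*$ immediately yields $\langle w - z^\dagger, c - z^\dagger\rangle_V \leq 0$ for all $c \in \mathcal Z^*$, so $z^\dagger = P_{\mathcal Z^*}(w)$ as well and hence $\dist_V(w, \mathcal Z^*) = d/M$ exactly. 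Were $\mathcal Z^*$ not convex this step would fail, so the convexity of the saddle-point set is the one structural ingredient that makes the rescaling clean.
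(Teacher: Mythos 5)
Your proof is correct and takes exactly the route the paper intends: the paper gives no argument of its own, deferring to the rescaling principle of \cite[Prop.~1]{fercoq2020restarting}, and your segment argument (convexity of $G_\beta(\cdot;z^*)$, $G_\beta(\cdot;z^*)=0$ on $\mathcal Z^*$, plus the observation that the projection onto the closed convex set $\mathcal Z^*$ is preserved along the segment so the distance scales exactly by $1/M$) is precisely that principle's proof instantiated for the smoothed gap. The only added value is that you verify the hypotheses (convexity of the supremum and the identification $\arg\min G_\beta(\cdot;z^*)=\mathcal Z^*$ via Proposition~\ref{prop:sm_opt_meas}) which the paper leaves implicit; this is a faithful filling-in, not a different method.
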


\subsection{Problems with strong convexity}

We now give a few examples to show that Assumption \ref{ass:qeb} is often satisfied.
\begin{proposition}
	\label{prop:qebsm4strconvconc}
If $L$ is $\mu$-strongly convex-concave in the norm $\|\cdot\|_V$, then $\forall z \in \mathcal Z$, $G_\infty(z;z^*) \geq \frac{\mu}{2}\|z - z^*\|^2_V$.
\end{proposition}
\begin{proof}
$G_\infty(z;z^*) = L(x, y^*) - L(x^*, y) \geq \frac{\mu}{2}\|z - z^*\|^2_V$.
\end{proof}

\begin{proposition}
	If $f + f_2$ has a $L_f'+L_f$-Lipschitz gradient, $g \square g_2 = \iota_{\{b\}}$, the primal function $(x \mapsto f(x) + f_2(x) + g \square g_2(Ax) )$ has a $\bar \mu$-QEB and $f+f_2$ is $\mu_f$-strongly convex, then the smoothed gap has a QEB:
	\[
G_\beta(z, z^*) \geq \min\Big(\max\big(\frac{\tau\mu_f}{2}, \frac{\bar \mu^2}{(L_f + L_f')^2}\frac{\sigma\tau \sigma_{\min}(A)^2}{16 \beta_y}\big), \frac{\sigma \sigma_{\min}(A)^2}{2(L_f + L_f' + \beta_x / \tau)} \Big) \dist_V(z, \mathcal Z^*)^2 \;.
	\]
	\label{prop:qeb4strconv_aff}
\end{proposition}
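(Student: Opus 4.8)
The plan is to exploit the affine structure. Since $g\square g_2=\iota_{\{b\}}$ gives $g^\ast+g_2^\ast=\langle b,\cdot\rangle$, the Lagrangian reduces to $L(x,y)=(f+f_2)(x)+\langle Ax-b,y\rangle$, and the smoothed gap centered at a saddle point $z^\ast$ separates completely. First I would carry out the two inner optimizations in $G_\beta(\cdot;z^\ast)$. Maximizing over $y'$ the concave quadratic $\langle Ax-b,y'\rangle-\frac{\beta_y}{2\sigma}\|y'-y^\ast\|^2$ and using $\nabla(f+f_2)(x^\ast)=-A^\top y^\ast$ together with $Ax^\ast=b$, the primal contribution becomes
\[
P(x):=D_{f+f_2}(x,x^\ast)+\frac{\sigma}{2\beta_y}\|Ax-b\|^2,
\]
where $D_{f+f_2}$ is the Bregman divergence of $f+f_2$ at $x^\ast$; minimizing the strongly convex inner problem over $x'$ gives $D(y):=L(x^\ast,y^\ast)-\inf_{x'}\big[L(x',y)+\frac{\beta_x}{2\tau}\|x'-x^\ast\|^2\big]$, so that $G_\beta(z;z^\ast)=P(x)+D(y)$ with both terms nonnegative. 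It then suffices to bound $P$ by a multiple of $\frac1\tau\dist(x,\mathcal X^\ast)^2$ and $D$ by a multiple of $\frac1\sigma\dist(y,\mathcal Y^\ast)^2$, reading off the stated constant as the minimum.

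For the dual term I would apply the descent lemma to $h_y(x')=L(x',y)+\frac{\beta_x}{2\tau}\|x'-x^\ast\|^2$, whose gradient is $(L_f+L_f'+\beta_x/\tau)$-Lipschitz, at the feasible point $x^\ast$. Since $h_y(x^\ast)=L(x^\ast,y^\ast)$ and $\nabla h_y(x^\ast)=A^\top(y-P_{\mathcal Y^\ast}y)$, this yields $D(y)\geq\frac{1}{2(L_f+L_f'+\beta_x/\tau)}\|A^\top(y-P_{\mathcal Y^\ast}y)\|^2$. Because $\mathcal Y^\ast=y^\ast+\ker A^\top$, the vector $y-P_{\mathcal Y^\ast}y$ lies in $(\ker A^\top)^\perp=\overline{\im A}$, so $\|A^\top(y-P_{\mathcal Y^\ast}y)\|\geq\sigma_{\min}(A)\dist(y,\mathcal Y^\ast)$ and the dual coefficient $\frac{\sigma\sigma_{\min}(A)^2}{2(L_f+L_f'+\beta_x/\tau)}$ falls out cleanly.

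For the primal term I would produce two lower bounds and keep the larger. The first is immediate: strong convexity makes $D_{f+f_2}(x,x^\ast)\geq\frac{\mu_f}{2}\|x-x^\ast\|^2$, and since $\mathcal X^\ast=\{x^\ast\}$ this gives the coefficient $\frac{\tau\mu_f}{2}$. The second combines the two pieces of $P$. Decomposing $x-x^\ast=u+v$ with $u\in\im A^\top$ and $v\in\ker A$, the penalty controls $u$ via $\frac{\sigma}{2\beta_y}\|Ax-b\|^2\geq\frac{\sigma\sigma_{\min}(A)^2}{2\beta_y}\|u\|^2$, while the QEB of the primal function, turned into a subgradient bound via Theorem 3.3 of \cite{drusvyatskiy2018error} and evaluated at the feasible projection $x^\ast+v$, controls $v$: the normal cone to $\{Ax=b\}$ being $\im A^\top$, this reads $\|P_{\ker A}\nabla(f+f_2)(x^\ast+v)\|\geq\frac{\bar\mu}{2}\|v\|$. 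Transporting this to $x$ through the $(L_f+L_f')$-Lipschitz gradient (error $\leq(L_f+L_f')\|u\|$) and invoking co-coercivity $D_{f+f_2}(x,x^\ast)\geq\frac{1}{2(L_f+L_f')}\|P_{\ker A}\nabla(f+f_2)(x)\|^2$, a case split according to whether $(L_f+L_f')\|u\|$ is dominated by $\tfrac{\bar\mu}{2}\|v\|$ delivers the product constant $\frac{\bar\mu^2}{(L_f+L_f')^2}\frac{\sigma\tau\sigma_{\min}(A)^2}{16\beta_y}$ after rescaling by $\tau$.

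The hard part is this second primal bound and, more precisely, the bookkeeping of constants in the case split: one must trade the penalty term (strong for $\|u\|$ but blind to $v$) against the QEB-driven gradient term (strong for $\|v\|$ but polluted by the transport error $(L_f+L_f')\|u\|$), and the factor $16$ together with the square $\bar\mu^2/(L_f+L_f')^2$ is exactly what the balancing costs. The elementary inequalities in the case analysis close precisely because $\bar\mu\leq L_f+L_f'$, i.e.\ the QEB constant never exceeds the smoothness constant. I would also pay attention to the regime of validity of the second bound: it is the operative estimate only when it stays below the intrinsic value of $P$ on feasible points (moderate $\beta_y$); for very small $\beta_y$ it is the dual coefficient that becomes binding, so the outer $\min$ in the statement is essential rather than cosmetic in guaranteeing a correct overall constant.
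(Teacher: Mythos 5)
Your skeleton coincides with the paper's in every place except the crucial one: the paper also splits $G_\beta(\cdot;z^*)$ into a primal term (your $P(x)$, with $\langle y^*,Ax-b\rangle$ written in place of the Bregman divergence) and a dual term, obtains the dual coefficient $\frac{\sigma\sigma_{\min}(A)^2}{2(L_f+L_f'+\beta_x/\tau)}$ by restricting $x'$ to the line $x^*+aA^\top(y^*-y)$ (equivalent to your descent-lemma argument on $h_y$), uses the same decomposition of $x-x^*$ into $\ker A$ and $(\ker A)^\perp$ components, and gets the $\frac{\tau\mu_f}{2}$ branch from strong convexity exactly as you do. Where you diverge is in extracting the $\bar\mu$-dependent coefficient, and there your route has a genuine gap: it cannot produce the stated constant $\frac{\bar\mu^2}{(L_f+L_f')^2}\frac{\sigma\tau\sigma_{\min}(A)^2}{16\beta_y}$. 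Writing $L=L_f+L_f'$ and $d$ for the distance of the feasible point to $\mathcal X^*$, your chain (error bound on the minimal-norm subgradient, transport by $L\|u\|$, co-coercivity) gives in the favorable case of the split at most $\frac{(1-t)^2\bar\mu^2}{8L}\,d^2$, a quantity that does not depend on $\beta_y$ at all, while the unfavorable case gives at most $\frac{t^2\bar\mu^2}{16L^2}\frac{\sigma\sigma_{\min}(A)^2}{\beta_y}(d^2+\|u\|^2)$. No choice of the threshold $t$ repairs this: your final coefficient is the minimum of a $\beta_y$-independent term of order $\bar\mu^2/L$ and a degraded copy of the target, so whenever $\frac{\sigma\sigma_{\min}(A)^2}{2\beta_y}>L$ and the $\bar\mu$-branch is the binding one in the outer $\min$ (take $\mu_f=0$ and $\beta_x$ small, so the dual coefficient is large), the proposition's constant is strictly out of reach. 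The $1/(2L)$ co-coercivity bottleneck is intrinsic to your mechanism, and your closing remark that the outer $\min$ covers this regime is incorrect, since the dual coefficient is independent of $\beta_y$ and can exceed both your cap and the claimed primal coefficient simultaneously.

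The paper avoids the bottleneck by never leaving the function-value level: it applies the QEB directly at the feasible projection $x_{A^\perp}$ as a value bound $\frac{\bar\mu}{2}\dist(x_{A^\perp},\mathcal X^*)^2$, bounds the cross term $\langle\nabla(f+f_2)(x_{A^\perp})-\nabla(f+f_2)(x^*),x-x_{A^\perp}\rangle$ by Cauchy--Schwarz and Lipschitz continuity, and then takes a convex combination (weight $\lambda$) of this estimate with the strong-convexity estimate, together with Young's inequality (parameter $\alpha$). The specific choices $\alpha=\frac{\bar\mu}{2L}$ and $\lambda=\frac{\bar\mu}{4L^2}\frac{\sigma\sigma_{\min}(A)^2}{\beta_y}$ make the full-strength penalty $\frac{\sigma\sigma_{\min}(A)^2}{2\beta_y}\|x_A\|^2$ absorb only the $\lambda$-fraction of the Young error, and it is exactly this $\lambda$-weighting that yields the product $\frac{\lambda\bar\mu}{4}=\frac{\bar\mu^2}{16L^2}\frac{\sigma\sigma_{\min}(A)^2}{\beta_y}$ with no loss of the form $\bar\mu^2/L$. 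A secondary, fixable imprecision in your write-up: Theorem 3.3 of Drusvyatskiy--Lewis gives $\|P_{\ker A}\nabla(f+f_2)(x^*+v)\|\geq\frac{\bar\mu}{2}\dist(x^*+v,\mathcal X^*)$, not $\frac{\bar\mu}{2}\|v\|$; these coincide only when $\mathcal X^*$ is a singleton (i.e.\ $\mu_f>0$), and otherwise you must carry $\dist(x^*+v,\mathcal X^*)$ through the argument and conclude with the triangle inequality $\dist(x,\mathcal X^*)\leq\dist(x^*+v,\mathcal X^*)+\|u\|$.
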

Note that we require either $\mu_f >0$ or $\bar \mu>0$.
\begin{proof}
The proof is a generalization of Proposition~\ref{prop:rate_strconv_equality} and reuses most of the argument.
	\begin{align*}
\sup_{y' \in \mathcal Y} L(x, y') - \frac{\beta_y}{2\sigma}\|y' - y^*\|^2
= f(x) \bl{+ f_2(x)} + \langle y^*, A x - b\rangle + \frac{\sigma}{2 \beta_y}\|Ax - b\|^2 \;.
	\end{align*}
We decompose $x = x_A + x_{A^\perp}$ with 
$x_{A^\perp} = P_{\{x' : Ax' = b\}}(x)$ and $x_A = x - x_{A^\perp} \in (\ker A)^\perp$.
We have $Ax - b = A x_A$, so that $\|Ax - b\| \geq \sigma_{\min}(A) \|x_A\|$. Moreover by convexity of $f\bl{+f_2}$ and optimality condition $\nabla f(x^*)\bl{+\nabla f_2(x^*)} = -A^\top y^*$,
\begin{align*}
f(x)& \bl{+f_2(x)} + \langle y^*, A x - b\rangle + \frac{\sigma}{2 \beta_y}\|Ax - b\|^2 \\
&\geq f(x_{A^\perp})\bl{+f_2(x_{A^\perp})} + \langle \nabla (f\bl{+f_2})(x_{A^\perp}), x - x_{A^\perp} \rangle- \langle \nabla ( f\bl{+ f_2})(x^*), x - x_{A^\perp} \rangle + \frac{\sigma}{2\beta_y}\sigma_{\min}(A)^2 \|x_A\|^2 \\
& \geq  f(x^*) \bl{+f_2(x^*)} + \frac{\bar \mu}{2} \dist(x_{A^\perp}, \mathcal X^*)^2 - (L_f + L_f') \|x_{A^\perp} - x^*\| \|x_A\|+ \frac{\sigma}{2\beta_y}\sigma_{\min}(A)^2 \|x_A\|^2
\end{align*}
where the last inequality comes from the assumption on the primal function and smoothness of $\nabla (f+f_2)$. We combine this with 
\begin{align*}
f(x)\bl{+f_2(x)} + \langle y^*, A x - b\rangle \geq f(x^*) \bl{+f_2(x^*)} + \frac{\mu_f}{2} \dist(x, \mathcal X^*)^2
\end{align*}
to get for all $\lambda \in [0,1]$ and $\alpha > 0$,
\begin{align*}
f(x)& \bl{+f_2(x)} + \langle y^*, A x - b\rangle + \frac{\sigma}{2 \beta_y}\|Ax - b\|^2 \\
& \geq  f(x^*) \bl{+f_2(x^*)} + \big(\frac{\lambda \bar\mu}{2} - \frac{\lambda \alpha (L_f + L_f')}{2} + \frac{(1-\lambda)\mu_f}{2}\big) \dist(x_{A^\perp}, \mathcal X^*)^2  \\ 
& \qquad\qquad+\big( \frac{\sigma}{2\beta_y}\sigma_{\min}(A)^2 - \frac{\lambda(L_f + L_f')}{2\alpha} + \frac{(1-\lambda)\mu_f}{2} \big)\|x_A\|^2
\end{align*}
We take $\alpha = \frac{\bar \mu}{2(L_f + L_f')}$, $\lambda = \frac{\bar \mu}{4 (L_f + L_f')^2}\frac{\sigma \sigma_{\min}(A)^2}{\beta_y}$ to get
\begin{align}
f(x) \bl{+f_2(x)} + \langle y^*, A x - b\rangle + \frac{\sigma}{2 \beta_y}\|Ax - b\|^2 \geq  f(x^*) \bl{+f_2(x^*)} + \max\big(\frac{\mu_f}{2}, \frac{\bar \mu^2}{(L_f + L_f')^2}\frac{\sigma \sigma_{\min}(A)^2}{16 \beta_y}\big) \dist(x, \mathcal X^*)^2 \;.
\label{eq:etax_in_strconv_lin}
\end{align}
	
	For the dual vector, we use the smoothness of the objective, the equality $\nabla f(x^*)\bl{+\nabla f_2(x^*)} = -A^\top y^*$ and $Ax^* = b$.
	\begin{align*}
	-L(x', y) &= -f(x') \bl{-f_2(x')} -\langle Ax' - b, y\rangle \\
	&\geq -f(x^*) \bl{-f_2(x^*)} - \langle \nabla f(x^*) - \nabla f_2(x^*) , x' - x^*\rangle - \frac{L_f + L_f'}{2}\|x' - x^*\|^2 -\langle Ax' - b, y\rangle \\
	& = -L(x^*, y^*) + \langle A^\top y^*, x' - x^*\rangle  - \langle x' - x^*, A^\top  y\rangle- \frac{L_f + L_f'}{2}\|x' - x^*\|^2
	\end{align*} 	
	For $a\in \mathbb R$, we restrict ourselves to $x' = x^* + a A^\top (y^*-y)$ so that
	\begin{align*}
\sup_{x' \in \mathcal X} -L(x', y) - \frac{\beta_x}{2 \tau}\|x' - x^*\|^2
& \geq 
\sup_{a \in \mathbb R}	-L(x^* + a A^\top (y^*- y),  y)- \frac{\beta_xa^2}{2 \tau}\|A^\top (y^* - y)\|^2 \\
&\geq \sup_{a \in \mathbb R} -L(x^*, y^*) + (a - a^2\frac{L_f + L_f' + \beta_x / \tau}{2})\|A^\top ( y - y^*)\|^2 \\
&= - L(x^*, y^*) + \frac{1}{2(L_f + L_f' + \beta_x / \tau)} \|A^\top ( y - y^*)\|^2
	\end{align*}
	
	Moreover, as in Proposition~\ref{prop:rate_strconv_equality}, we know that $\|A^\top y - A^\top y^*\| \geq \sigma_{\min(A)} \dist( y, \mathcal Y^*)$, where $\sigma_{\min(A)}$ is the smallest singular value of $A$.
	
Combining this with \eqref{eq:etax_in_strconv_lin} yields the result of the proposition.
\end{proof}

\begin{proposition}
Suppose that $\mathcal X$ and $\mathcal Y$ are finite-dimensional. Suppose that $f, f_2, g, g_2$ are convex piecewise linear-quadratic, which means that their domain is a union of polyhedra and on each of these polyhedra, they are quadratic functions. Then for all $\beta \in [0, +\infty[^2$, there exists $\eta(\beta)$ and $\mathcal R(\beta)$ such that $G_\beta(z;z^*) \geq \frac{\eta(\beta)}{2} \dist_V(z, \mathcal Z^*)^2$ for all $z \in \mathcal R(\beta)$ and $z^* \in \mathcal Z^*$.
\end{proposition}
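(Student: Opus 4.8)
The plan is to show that, for a fixed center $z^* \in \mathcal Z^*$, the function $\phi := G_\beta(\cdot\,; z^*)$ is a proper, closed, convex, piecewise linear-quadratic function on the finite-dimensional space $\mathcal Z$, and then to invoke the fact that such functions satisfy a quadratic error bound relative to their minimizer set on every bounded region. By Proposition~\ref{prop:sm_opt_meas} we already know $\phi \geq 0$ and $\{z : \phi(z) = 0\} = \mathcal Z^*$, so once the PLQ structure is established we have $\mathcal Z^* = \arg\min \phi$ with $\min \phi = 0$, and the QEB assumption reduces to a growth property of convex PLQ functions.

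The core step is to establish the PLQ structure of $\phi$ by piecewise linear-quadratic calculus. Recall that the class of convex PLQ functions is closed under addition, under composition with an affine map, and under conjugation \cite[Thm 11.14]{rockafellar2009variational}, hence under partial maximization against a quadratic penalty (which is exactly a conjugation). Since the objective defining $G_\beta$ is separable in $x'$ and $y'$, I would split
\[
\phi(z) = \underbrace{\sup_{y'}\Big(L(x,y') - \frac{\beta_y}{2\sigma}\|y'-y^*\|^2\Big)}_{=:\,\phi_1(x)} + \underbrace{\sup_{x'}\Big(-L(x',y) - \frac{\beta_x}{2\tau}\|x'-x^*\|^2\Big)}_{=:\,\phi_2(y)} \;.
\]
Because $g,g_2$ are convex PLQ, so are their conjugates $g^*,g_2^*$, and therefore so is $h := g^* + g_2^* + \frac{\beta_y}{2\sigma}\|\cdot - y^*\|^2$. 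Then $\phi_1(x) = f(x) + f_2(x) + h^*(Ax)$ is the sum of the PLQ function $f+f_2$ with the PLQ function $h^*$ precomposed with the linear map $A$, hence $\phi_1$ is convex PLQ in $x$; a symmetric computation (with $\tilde h := f + f_2 + \frac{\beta_x}{2\tau}\|\cdot - x^*\|^2$ and $\phi_2(y) = g^*(y) + g_2^*(y) + \tilde h^*(-A^\top y)$) shows $\phi_2$ is convex PLQ in $y$. Viewed on $\mathcal Z = \mathcal X \times \mathcal Y$, the sum $\phi = \phi_1 + \phi_2$ is again convex PLQ: its domain is a finite union of polyhedra and it coincides with a convex quadratic on each of them.

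It then remains to invoke a quadratic error bound for convex PLQ functions on bounded sets. Such a function is semialgebraic, so it satisfies the Kurdyka--{\L}ojasiewicz inequality, and since each of its finitely many pieces is quadratic the relevant exponent is $1/2$; for a convex function this is equivalent to quadratic growth away from $\arg\min$ on bounded regions \cite{bolte2017error}. Concretely, for every bounded $\mathcal R(\beta)$ there is $\eta(\beta) > 0$ with $\phi(z) \geq \frac{\eta(\beta)}{2}\dist_V(z,\mathcal Z^*)^2$ on $\mathcal R(\beta)$, which is exactly Assumption~\ref{ass:qeb} localized. This can also be seen directly: on each polyhedral piece $P_i$, $\phi$ equals a convex quadratic that is nonnegative on $P_i$ and vanishes exactly on $\mathcal Z^* \cap P_i$, yielding a per-piece quadratic growth bound relative to $\mathcal Z^* \cap P_i$; since $\dist_V(z,\mathcal Z^*) \leq \dist_V(z,\mathcal Z^* \cap P_i)$ for $z \in P_i$, the finitely many per-piece constants can be aggregated over the bounded region into a single $\eta(\beta)$.

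The main obstacle is the PLQ calculus step, where one must verify that partial maximization with the quadratic penalty is genuinely a conjugation so that \cite[Thm 11.14]{rockafellar2009variational} applies, and handle the value $+\infty$ correctly when a component of $\beta$ vanishes or when the penalized conjugate has restricted domain. A second, more delicate point lies in the growth argument: the minimizer set $\mathcal Z^*$ is in general strictly smaller than the zero set of any single quadratic piece, and a piece need not be globally nonnegative, so the per-piece quadratic growth must be taken relative to $\mathcal Z^* \cap P_i$ on the polyhedron $P_i$ rather than to the unconstrained minimizer set of that quadratic, and a compactness argument over the bounded region is needed to combine the pieces with a uniform constant.
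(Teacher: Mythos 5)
Your first step---establishing that $\phi := G_\beta(\cdot\,;z^*)$ is convex piecewise linear-quadratic via the separable structure, writing $\phi_1(x) = f(x)+f_2(x)+h^*(Ax)$ and $\phi_2(y) = g^*(y)+g_2^*(y)+\tilde h^*(-A^\top y)$---is a more explicit (and correct) version of what the paper does; the paper simply invokes closure of the convex PLQ class under addition, conjugation and Moreau envelope from \cite{rockafellar2009variational}. Where you genuinely diverge is in the second step. The paper passes from PLQ-ness to the quadratic error bound through the subgradient: a convex PLQ function has a piecewise polyhedral subdifferential, which is metrically subregular at any zero by Robinson's theorem on polyhedral multifunctions (cited through \cite{dontchev2009implicit}), and metric subregularity of the subgradient converts into a QEB for convex functions by Proposition~\ref{prop:msr2qeb}. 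You instead try to obtain quadratic growth directly, either through a Kurdyka--{\L}ojasiewicz argument or through a per-piece analysis.

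This second step is where your proof has a gap. The assertion ``it is semialgebraic, so it satisfies the KL inequality, and since each of its finitely many pieces is quadratic the relevant exponent is $1/2$'' is not a derivation: semialgebraicity yields a KL inequality with \emph{some} unknown exponent, and the claim that piecewise quadratic structure forces the exponent $1/2$ is exactly the statement to be proved---it cannot be read off from the pieces. Your ``direct'' variant hides the same difficulty: the per-piece claim, namely that a convex quadratic $q_i$ which is nonnegative on a polyhedron $P_i$ grows quadratically (on bounded sets) away from its zero set $\mathcal Z^* \cap P_i$ within $P_i$, is precisely the error-bound theorem for convex quadratic programs; the zero set is $\arg\min_{P_i} q_i$, it may lie on the boundary of $P_i$, no constraint qualification is available, and the result is a genuine theorem (of Robinson/Li type), not an evident fact. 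Your aggregation step (using $\dist_V(z,\mathcal Z^*) \le \dist_V(z,\mathcal Z^*\cap P_i)$ and the finiteness of the pieces) is fine. To close the argument you must either cite the error bound for piecewise convex quadratic functions, or follow the paper's route: observe that $\partial \phi$ is piecewise polyhedral, apply Robinson's metric subregularity theorem via \cite{dontchev2009implicit}, and conclude with Proposition~\ref{prop:msr2qeb}.
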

\begin{proof}
The proof follows the lines of~\cite{latafat2019new}.
The  class  of piecewise linear-quadratic  functions  is  closed  under  scalar  multiplication,  addition,  conjugation  and  Moreau  envelope~\cite{rockafellar2009variational}. Hence for all $\beta \in [0, +\infty[^2$, $G_\beta(\cdot, z^*)$ is piecewise linear quadratic. As a consequence, its subgradient $\partial_z G_\beta(\cdot, z^*)$ is piecewise polyhedral and thus there exists $\eta>0$ such that it satisfies metric sub-regularity with constant $\eta$ at all $z^* \in \mathcal Z^*$ for 0 \cite{dontchev2009implicit}.
Since $G_\beta(\cdot, z^*)$ is a convex function, this implies the result by Proposition~\ref{prop:msr2qeb}.
\end{proof}

\subsection{Linear programs}

In the rest of the section, we are going to show that linear programs do satisfy Assumption~\ref{ass:qeb} and give the constant as a function of the Hoffman constant~\cite{hoffman1952approximate}.

We consider the linear optimization problem
\begin{align}
\label{eq:lp}
&\min_{x \in \mathbb R^n} c^\top x\\
&A_{E,:} x = b_E \notag \\
&A_{I,:} x \leq b_I \notag \\
& x_N \geq 0 \notag
\end{align}
where $A$ is a $m \times n$ matrix, $b \in \mathbb R^m$, $E$ and $I$ are disjoint sets of indices such that $E \cup I = \{1,\ldots, m\}$ and $N$, $F$ are disjoint sets of indices such that $N \cup F = \{1,\ldots, n\}$.

A dual of this problem is given by
\begin{align*}
&\max_{y \in \mathbb R^m} -b^\top y\\
&(A_{:,F})^\top y + c_F = 0 \\
&(A_{:,N})^\top y + c_N \geq 0 \\
& y_I \geq 0
\end{align*}

It happens that the set of primal-dual solution of an LP is characterized by a system of linear equalities and inequalities. This holds true because a feasible primal-dual pair with equal values is necessarily optimal. We get the following system
\begin{align}
\begin{cases}
 c^\top x + b^\top y = 0 \qquad\qquad& (A_{:,F})^\top y + c_F = 0 \\
A_{E,:} x = b_E &(A_{:,N})^\top y + c_N \geq 0\\
A_{I,:} x \leq b_I & y_I \geq 0 \\
 x_N \geq 0 
\end{cases}
\label{sys:lp}
\end{align}
Let us denote the Hoffman constant~\cite{hoffman1952approximate} of this system by $\theta$.
This constant \revisionone{is the smallest positive number such that for all $z \in \mathbb R^{m+n}$}
\begin{align}
\dist(z, \mathcal Z^*) \leq \theta \big(&|c^\top x + b^\top y|^2 + \|A_{E,:} x - b_E\|^2
+ \dist(A_{I,:} x - b_I, \mathbb R^I_-)^2 \notag\\
&+\dist(x_N, \mathbb R^N_+)^2 
 + \|(A_{:,F})^\top y + c_F\|^2 \notag\\
 &+ \dist((A_{:,N})^\top y + c_N, \mathbb R^N_+)^2 + \dist(y_I, \mathbb R^I_+)^2\big)^{1/2}
 \label{eq:defhoffman}
\end{align}

It is known that the Lagrangian's subgradient of an LP satisfies metric sub-regularity with a constant proportional to $\theta$~\cite{lu2020adaptive}. We shall show that the same holds for the QEB of the smoothed gap centered at $z^*$.

\begin{proposition}
	\label{prop:rate_qebsm}
For any $\beta \geq 0$, $R>0$ and $z^* \in \mathcal Z^*$, the linear program \eqref{eq:lp} satisfies the quadratic error bound:
for all $z$ such that $G_\beta(z; z^*) \leq R$, we have
	\begin{align*}
	G_\beta(z; z^*) \geq\frac{\dist(z, \mathcal Z^*)^2}{\theta^2 \Big(
		\sqrt{\frac{2\beta}{\tau}}(\sqrt 2 + \|x_F^*\| +\|x_N^*\| ) + \sqrt{\frac{2\beta}{\sigma}}( \sqrt 2 + \|y_E^*\| +\|y_{I}^*\|)+ 3 \sqrt R \Big)^2} \;.
	\end{align*}
Hence, for $R$ of the order of $\frac 1 \theta$, $G_{\frac 1 \theta}(\cdot, z^*)$ has a $\frac c \theta$-QEB with $c$ independent of $\theta$. 
\end{proposition}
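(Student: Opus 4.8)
The plan is to bound every residual of the Hoffman system~\eqref{sys:lp} by a multiple of $\sqrt{G_\beta(z;z^*)}$ and then invoke the Hoffman inequality~\eqref{eq:defhoffman}, which gives $\dist(z,\mathcal Z^*)\le \theta C\sqrt{G_\beta(z;z^*)}$ and hence the claimed bound. For~\eqref{eq:lp} we have $f(x)=c^\top x+\iota_{\{x_N\ge 0\}}(x)$ and $g^*(y)=b^\top y+\iota_{\{y_I\ge 0\}}(y)$, so $L(x,y)=c^\top x+\langle Ax-b,y\rangle$ carries the sign constraints through the indicators. Consequently $G_\beta(z;z^*)=+\infty$ unless $x_N\ge 0$ and $y_I\ge 0$, so the two sign residuals $\dist(x_N,\mathbb R^N_+)$ and $\dist(y_I,\mathbb R^I_+)$ vanish and the statement is trivial otherwise. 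Under these sign constraints the smoothed gap separates as $G_\beta(z;z^*)=c^\top x+P_y+P_x$, where $P_y=\sup_{y'_I\ge 0}\big[\langle r,y'\rangle-\tfrac{\beta_y}{2\sigma}\|y'-y^*\|^2\big]$ and $P_x=\sup_{x'_N\ge 0}\big[-c^\top x'-\langle Ax'-b,y\rangle-\tfrac{\beta_x}{2\tau}\|x'-x^*\|^2\big]$, with $r=Ax-b$ and $d=-(c+A^\top y)$ denoting the primal and dual residual vectors.

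First I would control the four feasibility residuals. Since $y^*$ is dual feasible, the point $y'=y^*+\tfrac{\sigma}{\beta_y}v$ with $v_E=r_E$ and $v_I=(r_I)_+$ keeps $y'_I\ge y^*_I\ge 0$, hence is admissible in $P_y$; inserting it and optimizing the resulting scalar gives $P_y\ge \langle r,y^*\rangle+\tfrac{\sigma}{2\beta_y}\big(\|A_{E,:}x-b_E\|^2+\dist(A_{I,:}x-b_I,\mathbb R^I_-)^2\big)$. The symmetric choice in $P_x$, perturbing $x^*$ along $d_F$ and $(d_N)_+$, yields $P_x\ge \langle b,y\rangle+\langle d,x^*\rangle+\tfrac{\tau}{2\beta_x}\big(\|(A_{:,F})^\top y+c_F\|^2+\dist((A_{:,N})^\top y+c_N,\mathbb R^N_+)^2\big)$. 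Adding the two bounds, the affine leftover $c^\top x+\langle r,y^*\rangle+\langle b,y\rangle+\langle d,x^*\rangle$ is exactly $G_\infty(z;z^*)=L(x,y^*)-L(x^*,y)$, which is nonnegative by (the proof of) Proposition~\ref{prop:sm_opt_meas}. Dropping it leaves
\[
\tfrac{\sigma}{2\beta_y}\big(\|A_{E,:}x-b_E\|^2+\dist(A_{I,:}x-b_I,\mathbb R^I_-)^2\big)+\tfrac{\tau}{2\beta_x}\big(\|(A_{:,F})^\top y+c_F\|^2+\dist((A_{:,N})^\top y+c_N,\mathbb R^N_+)^2\big)\le G_\beta(z;z^*),
\]
so these four residuals are bounded by $\sqrt{2\beta_y/\sigma}\,\sqrt{G_\beta}$ and $\sqrt{2\beta_x/\tau}\,\sqrt{G_\beta}$ respectively.

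It remains to control the duality-gap residual $|c^\top x+b^\top y|$, which is the crux. Using strong duality $c^\top x^*+b^\top y^*=0$ I would write it as $c^\top(x-x^*)+b^\top(y-y^*)$, and then, substituting $c=-A^\top y^*+s_N$ and exploiting complementary slackness at $z^*$, reduce it to the identity $c^\top x+b^\top y=G_\infty(z;z^*)-\langle y^*,r\rangle-\langle x^*,d\rangle$. Here $G_\infty\le G_\beta\le R$ contributes an $O(\sqrt R)$ coefficient since $G_\infty\le\sqrt R\,\sqrt{G_\beta}$, while the ``active'' pieces $\langle y_E^*,r_E\rangle$, $\langle y_I^*,(r_I)_+\rangle$, $\langle x_F^*,d_F\rangle$, $\langle x_N^*,(d_N)_+\rangle$ are each bounded by Cauchy--Schwarz by the corresponding feasibility residual times $\|y_E^*\|$, $\|y_I^*\|$, $\|x_F^*\|$, $\|x_N^*\|$, hence by $\sqrt{2\beta_y/\sigma}\,\|y_\bullet^*\|\sqrt{G_\beta}$ or $\sqrt{2\beta_x/\tau}\,\|x_\bullet^*\|\sqrt{G_\beta}$, reproducing exactly the cross terms of the announced constant.

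The main obstacle is the leftover ``strict slack'' contributions $\langle y_I^*,(r_I)_-\rangle$ and $\langle x_N^*,(d_N)_-\rangle$, which measure how far $z$ sits inside the feasible region and are \emph{not} penalized quadratically by $P_y,P_x$ in the deep-slack regime where the constrained maximizer is clipped to the boundary. I would handle them by returning to the exact value of $P_y$ and $P_x$ rather than the crude test-point lower bound: on coordinates where the maximizer stays interior one recovers the quadratic control $\tfrac{\sigma}{2\beta_y}\|(r_I)_-\|^2\le G_\beta$, whereas on the clipped coordinates the slack term is absorbed using $G_\infty\le G_\beta\le R$, which is precisely where the factor $3\sqrt R$ enters. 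Assembling the pieces by subadditivity of the square root, $\sqrt{\sum_i\rho_i^2}\le |c^\top x+b^\top y|+\sqrt{\rho_{\mathrm{pe}}^2+\rho_{\mathrm{pi}}^2}+\sqrt{\rho_{\mathrm{de}}^2+\rho_{\mathrm{di}}^2}\le C\sqrt{G_\beta}$ with $C$ as stated, and combining with~\eqref{eq:defhoffman} finishes the proof; I expect the only delicate bookkeeping to be tracking the numerical factors ($\sqrt 2$ and $3$) through this last collection step.
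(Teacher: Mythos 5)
Your overall route is the paper's route in light disguise: the paper also splits $G_\beta(z;z^*)$ into a part depending only on $(x,y^*)$ and a part depending only on $(x^*,y)$, reads off the feasibility residuals from nonnegative quadratic terms, splits the inequality coordinates into ``interior'' and ``clipped'' sets (called $I_+$ and $I_-$ in the paper's proof), and feeds everything into the Hoffman bound \eqref{eq:defhoffman}. Your cosmetic variations --- test-point lower bounds instead of the explicit closed form of the suprema, a single application of Hoffman to $(x,y)$ with the gap $c^\top x+b^\top y$ instead of two applications to $(x,y^*)$ and $(x^*,y)$, and bookkeeping via the identity $c^\top x+b^\top y=G_\infty(z;z^*)-\langle y^*,r\rangle-\langle x^*,d\rangle$ with $r=Ax-b$, $d=-(c+A^\top y)$ --- are all sound, and would if anything produce a slightly smaller constant, which still implies the stated bound since the constant appears in a denominator.

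The one step that would fail as written is precisely the one you call the crux: the clipped-coordinate slacks $\langle y_I^*,(r_I)_-\rangle$ and $\langle x_N^*,(d_N)_-\rangle$ are \emph{not} ``absorbed using $G_\infty\le G_\beta\le R$''. These slacks enter $G_\infty$ with a \emph{negative} sign ($G_\infty$ contains $+\langle r_I,y_I^*\rangle$, and $r_j\le 0$ on clipped coordinates), so $G_\infty$ can be tiny while the slack is huge; smallness of $G_\infty$ gives no upper bound on them. The correct absorption --- and this is exactly what the paper's computation does --- uses the clipped coordinates' own nonnegative contribution to $G_\beta$. On a clipped coordinate $j$, i.e.\ one with $y_j^*+\frac{\sigma}{\beta_y}r_j\le 0$, the exact value of the $j$-th term of $P_y-\langle r,y^*\rangle$ is
\[
T_j \;=\; -\frac{\beta_y}{2\sigma}(y_j^*)^2-r_jy_j^* \;=\; y_j^*|r_j|-\frac{\beta_y}{2\sigma}(y_j^*)^2 \;\ge\; 0,
\]
and the clipping condition $y_j^*\le\frac{\sigma}{\beta_y}|r_j|$ gives $\frac{\beta_y}{2\sigma}(y_j^*)^2\le\frac12 y_j^*|r_j|$, hence $T_j\ge\frac12 y_j^*|r_j|$. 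Summing over clipped coordinates and using $\sum_j T_j\le G_\beta(z;z^*)\le R$ yields $\langle y_I^*,(r_I)_-\rangle\le 2\,G_\beta(z;z^*)\le 2\sqrt R\sqrt{G_\beta(z;z^*)}$, and similarly on the dual side; this, not $G_\infty$, is where the $O(\sqrt R)$ term of the constant comes from (in the paper it appears as the ``$I_-$'' case, producing the $3\epsilon$). With this two-line replacement your argument closes; everything else you wrote is correct.
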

\begin{proof}
	See Appendix~\ref{app:qeb_lp}.
\end{proof}

\section{Analysis of PDHG under quadratic error bound of the smoothed gap}
\label{sec:qebsmlinear}

In this section, we show that under the new regularity assumption, PDHG converges linearly. Moreover, we give an explicit value for the rate. This result is central to the paper because it shows that the quadratic error bound of the smoothed gap is a fruitful assumption: not only it
is as broadly applicable as the metric subregularity of the Lagrangian's generalized gradient, but also the rates it predicts reach the state of the art in all subcases of interest.

\begin{theorem}
\label{thm:rate_pdhg_qebsm}
	Under Assumption~\ref{ass:qeb}, if $\mathcal R$ contains $\{z \;:\; \|z - P_{\mathcal Z^*}(z_0) \| \leq \dist_V(z_0, \mathcal Z^*)\}$, then PDHG converges linearly at a rate 
	\[
\Big(1 +  \Lambda\frac{\eta}{1+\eta/\lambda}\Big)\dist_V(z_{k+1}, \mathcal Z^*)^2 \leq \dist_V(z_k, \mathcal Z^*)^2 
	\]
where \revisionone{
	\[\Lambda = \frac{\lambda}{\max((1+a_2) \lambda + 1/\beta_x, (2+a_2)\lambda + 1/\beta_y),}\]$\lambda$ is defined in Lemma \ref{lem:averaged_operator} and $a_2 = \max(\frac{2 \alpha_f - 1}{1-\alpha_f}, \frac{2\alpha_g-1+\gamma}{1-\alpha_g - \gamma}) \geq -1$ is defined in Lemma~\ref{lem:lag_ineq}}.
\end{theorem}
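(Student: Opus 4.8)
The plan is to follow the proof of Proposition~\ref{prop:rate_strconvconc}, replacing strong convexity by Assumption~\ref{ass:qeb}. Fix $z^*=P_{\mathcal Z^*}(z_k)$, which is a fixed point of $T$, so that $\|z_k-z^*\|_V=\dist_V(z_k,\mathcal Z^*)$ and $\tilde V(z_k,z^*)$ does not depend on the particular choice of $z^*$. Starting from Lemma~\ref{lem:lag_ineq} with a \emph{free} test point $z=(x,y)$, I would subtract the smoothing penalties $\frac{\beta_x}{2\tau}\|x-x^*\|^2+\frac{\beta_y}{2\sigma}\|y-y^*\|^2$ from both sides and take the supremum over $z$. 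The supremum of the left-hand side is exactly $G_\beta(\bar z_{k+1};z^*)$, while the right-hand side is concave quadratic in $z$ and maximizes coordinatewise in closed form, yielding
\[
G_\beta(\bar z_{k+1};z^*)\le \tfrac12\|z_k-z^*\|_V^2-\tfrac12\|z_{k+1}-z^*\|_V^2+\tfrac{\|x_{k+1}-x_k\|^2}{2\tau\beta_x}+\tfrac{\|y_{k+1}-y_k\|^2}{2\sigma\beta_y}+a_2\tilde V(z_k,z^*)\,.
\]

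Next I would add $\tfrac a2$ times the nonexpansiveness inequality \eqref{eq:nonexpansive_with_vtilde} (taken at $z_k$ and $z^*$), with a tunable parameter $a\ge\max(0,a_2)$, which turns the two distance terms into $\tfrac{1+a}2$ and replaces $a_2\tilde V$ by $(a_2-a)\tilde V$. I would then bound the left-hand side below by $\tfrac\eta2\dist_V(\bar z_{k+1},\mathcal Z^*)^2$ via Assumption~\ref{ass:qeb} (the iterates remaining in $\mathcal R$ by the stability $\|z_k-P_{\mathcal Z^*}(z_0)\|_V\le\dist_V(z_0,\mathcal Z^*)$ that nonexpansiveness provides), and use Lemma~\ref{lem:bars2nobars} to replace $\dist_V(\bar z_{k+1},\mathcal Z^*)^2$ by $(1-\alpha)\dist_V(z_{k+1},\mathcal Z^*)^2$ minus $(\alpha^{-1}-1)\tfrac1\sigma\|y_{k+1}-y_k\|^2$. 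Together with $\|z_{k+1}-z^*\|_V\ge\dist_V(z_{k+1},\mathcal Z^*)$ this gives
\[
\Big(\tfrac{\eta(1-\alpha)}2+\tfrac{1+a}2\Big)\dist_V(z_{k+1},\mathcal Z^*)^2\le\tfrac{1+a}2\dist_V(z_k,\mathcal Z^*)^2+R\,,
\]
with $R$ gathering the residual square terms.

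To finish, I would use Lemma~\ref{lem:tildev}, $\tilde V(z_k,z^*)\ge\tfrac\lambda2\|z_{k+1}-z_k\|_V^2$, to absorb the residual $\|x_{k+1}-x_k\|^2$ and $\|y_{k+1}-y_k\|^2$ into the negative $(a_2-a)\tilde V$ term. This reduces $R\le0$ to two conditions: the coefficient $\tfrac1{\beta_x}+(a_2-a)\lambda$ of $\tfrac1\tau\|x_{k+1}-x_k\|^2$ and the coefficient $\tfrac1{\beta_y}+(a_2-a)\lambda+\eta(\alpha^{-1}-1)$ of $\tfrac1\sigma\|y_{k+1}-y_k\|^2$ must both be nonpositive. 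Choosing $\alpha=\tfrac{\eta}{\lambda+\eta}$, so that $\eta(\alpha^{-1}-1)=\lambda$ and $1-\alpha=\tfrac\lambda{\lambda+\eta}$, together with $a=a_2+\max\big(\tfrac1{\lambda\beta_x},\,1+\tfrac1{\lambda\beta_y}\big)$ makes both coefficients nonpositive (and ensures $a\ge0$), and gives $\lambda(1+a)=\max\big((1+a_2)\lambda+\tfrac1{\beta_x},(2+a_2)\lambda+\tfrac1{\beta_y}\big)$, i.e. $\Lambda=\tfrac1{1+a}$. Dropping $R$ then leaves the contraction with improvement $\tfrac{\eta(1-\alpha)}{1+a}=\tfrac{\eta\lambda}{(\lambda+\eta)(1+a)}=\Lambda\tfrac{\eta}{1+\eta/\lambda}$, exactly the claimed rate.

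The delicate step is the coupled choice of $\alpha$ and $a$: decreasing $\alpha$ to sharpen the contraction inflates $\eta(\alpha^{-1}-1)$ and forces $a$ up, and the asymmetric appearance of $\beta_x$ and $\beta_y$ in $\Lambda$ reflects that only the $y$-block inherits the Lemma~\ref{lem:bars2nobars} correction while both blocks carry a smoothing residual. Verifying that the balanced choice above is optimal, and that $\bar z_{k+1}$ genuinely stays in $\mathcal R$ (controlling the gap $\|\bar z_{k+1}-z_{k+1}\|_V^2=\tau\|A^\top(\bar y_{k+1}-y_k)\|^2$ against the stability ball), are the points that require care; the closed-form supremum and the remaining algebra are routine.
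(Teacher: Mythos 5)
Your proposal is correct and follows essentially the same route as the paper: the same supremum computation turning Lemma~\ref{lem:lag_ineq} into a bound on $G_\beta(\bar z_{k+1};z^*)$, the same use of Assumption~\ref{ass:qeb}, Lemma~\ref{lem:bars2nobars} and Lemma~\ref{lem:tildev}, and the same choice $\alpha=\eta/(\lambda+\eta)$. Your additive mixing parameter $a$ is just the paper's convex-combination weight in disguise ($\Lambda = 1/(1+a)$), and your choice $a=a_2+\max\big(\tfrac{1}{\lambda\beta_x},1+\tfrac{1}{\lambda\beta_y}\big)$ reproduces exactly the paper's $\Lambda$ and hence the claimed rate.
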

\begin{proof} In this proof, we will use the notation $\beta \odot z = (\beta_x x, \beta_y y)$ and $\|z\|_{\beta V}^2 = \frac {\beta_x}{\tau} \|x\|^2 + \frac{\beta_y}{\sigma} \|y\|^2$.
By Lemma~\ref{lem:lag_ineq}, we have
	\begin{align*}
L(\bar x_{k+1}, y) - L(x, \bar y_{k+1}) \leq 
\frac 12 \|z - z_k\|^2_{V} -\frac 12 \|z - z_{k+1}\|^2_V
\revisionone{+a_2 \tilde V(\bar z_{k}, z^*)} \;.
	\end{align*}

For $z^* = P_{\mathcal Z^*}(z_k)$, \revisionone{the projection of $z_k$ onto the set of saddle points using norm $\|\cdot\|_V$,}
	\begin{align*}
	G_\beta(\bar z_{k+1}; z^*) &= \sup_x \sup_y L(\bar x_{k+1}, y) - \frac{\beta_y}{2} \|y -y^*\|^2_{\sigma^{-1}} - L(x, \bar y_{k+1}) - \frac{\beta_x}{2} \|x -x^*\|^2_{\tau^{-1}} \\
	&\leq 
	\sup_z \frac 12 \|z - z_k\|^2_{V} -\frac 12 \|z - z_{k+1}\|^2_V  - \frac{1}{2} \|z - z^*\|_{\beta V}^2 \revisionone{+a_2 \tilde V(\bar z_{k}, z^*)}
	\end{align*}
	For the right hand side, \revisionone{we are looking for $z$ such that } $\beta \odot (z - z^*) + (z - z_{k+1}) - (z - z_k) = 0$
	so that $\beta \odot z = \beta \odot z^* + z_{k+1}  - z_k$ and
	\begin{align*}
	\frac 12 \|z - z_k\|^2_{V}& -\frac 12 \|z - z_{k+1}\|^2_V  - \frac{1}{2} \|z - z^*\|_{\beta V}^2 \\&=  \frac 12 \|z^* - z_k\|^2_{V} -\frac 12 \|z^* - z_{k+1}\|^2_V + \frac{1}{2} \|z_{k+1} - z_k \|_{\beta^{-1}V}^2 \\
 &\leq \frac 12 \dist_{V 
}(z_k, \mathcal Z^*)^2 -\frac 12 \dist_{V
 }(z_{k+1}, \mathcal Z^*)^2 + \frac{1}{2} \| z_{k+1} - z_k\|_{\beta
 ^{-1}V
 }^2
	\end{align*}
	where the last inequality comes from our choice of $z^*$. We also have by Lemma \ref{lem:averaged_operator}
	\begin{multline*}
	\frac 12 \dist_V(z_k, \mathcal Z^*)^2 -\frac 12 \dist_V(z_{k+1}, \mathcal Z^*)^2  - \tilde V(z_k, z^*) 
	\geq \frac 12 \|z^* - z_k\|^2_V -\frac 12 \|z^* - z_{k+1}\|^2_V - \tilde V(z_k, z^*) \geq 0 \;.
	\end{multline*}
	
	Using Assumption \ref{ass:qeb}, this leads to: $\forall \revisionone{\Lambda} \in [0,1]$,
	\begin{align*}
	\frac 12 \dist_V(z_k, \mathcal Z^*)^2 -\frac 12 \dist_V(z_{k+1}, \mathcal Z^*)^2 + \frac{\Lambda}{2} \|z_k - z_{k+1}\|_{\beta^{-1}V}^2 \revisionone{+(\Lambda a_2 - (1-\Lambda)) }\tilde V(z_k, z^*) \geq \frac{\Lambda \eta}{2} \dist_V(\bar z_{k+1}, \mathcal Z^*)^2 \;.
	\end{align*}

Using Lemma \ref{lem:bars2nobars} and Lemma~\ref{lem:tildev}, we get, as soon as $\Lambda a_2 - (1-\Lambda) \leq 0$, 
	\begin{align*}
	\frac 12 \dist_V(&z_k, \mathcal Z^*)^2 -\frac 12 \dist_V(z_{k+1}, \mathcal Z^*)^2 + \Big(\frac{\Lambda}{\beta_x} + (\Lambda a_2 - (1-\Lambda))\lambda  \Big) \frac{1}{2\tau}\|x_k - x_{k+1}\|^2 \\
	& \qquad + \Big(\frac{\Lambda}{\beta_y} + (\alpha^{-1} - 1)\Lambda \eta + (\Lambda a_2 - (1-\Lambda))\lambda \Big)\frac{1}{2\sigma} \|y_k - y_{k+1}\|^2  \\
	&\geq \frac{(1-\alpha)\Lambda \eta}{2} \dist_V(z_{k+1}, \mathcal Z^*)^2 
	\end{align*}

So, taking $\alpha = \frac{\eta}{\lambda+\eta}$ and \revisionone{$\Lambda = \frac{\lambda}{\max((1+a_2) \lambda + 1/\beta_x, (2+a_2)\lambda + 1/\beta_y)} \leq 1$} leads to $\frac{\Lambda}{\beta_y} + (\alpha^{-1} - 1)\Lambda \eta + (\Lambda a_2 - (1-\Lambda))\lambda = \frac{\Lambda}{\beta_y} + \lambda\Lambda + (a_2+1) \lambda\Lambda - \lambda\leq 0$ and $\frac{\Lambda}{\beta_x} + (\Lambda a_2 - (1-\Lambda))\lambda \leq 0$, so that
	\begin{align*}
	\dist_V(z_k, \mathcal Z^*)^2 & \geq \big(1 + \Lambda \frac{\eta}{1+\eta/\lambda }\big) \dist_V(z_{k+1}, \mathcal Z^*)^2
	\end{align*}
	and thus the algorithm enjoys a linear rate of convergence.
\end{proof}

\paragraph*{Strongly convex-concave Lagrangian}

If the Lagrangian is strongly convex concave, then we can take $\beta = (+\infty, +\infty)$ and $\eta = \mu$ (Proposition~\ref{prop:qebsm4strconvconc}), so that we recover the rate of Proposition~\ref{prop:rate_strconvconc}.

\revisionone{Note that in that case, the rate of order $1-c \mu$ given by Proposition~\ref{prop:rate_strconvconc}, and so by its generalized version Theorem~\ref{thm:rate_pdhg_qebsm}, is much better than what Proposition~\ref{prop:rate_msr} tells us: a rate of order $1-c\mu^2$. Hence, we can see that for this important particular case, the rate predicted using the quadratic error bound of the smoothed gap is more informative than using the metric subregularity of the Lagrangian's gradient. Moreover, the new assumption applies to all piecewise-linear quadratic problems, making it at the same time accurate and general.
}

\paragraph*{Back to the toy problem}

We consider again the linearly constrained 1D problem $\min_{x\in \mathbb R} \{ \frac \mu 2 x^2 \;:\; ax = b\}$
where $a, b \in \mathbb R$ and $\mu \geq 0$ introduced in Section~\ref{sec:toy_quadratic} and we calculate the quadratic error bound of the smoothed gap.
\begin{align*}
G_{\beta}(\bar z, z^*)& = \sup_y \frac \mu 2 \bar x^2 + y (a\bar x -b) - \frac{\beta_y}{2\sigma}(y - y^*)^2 + \sup_x - \frac \mu 2 x^2 - \bar y (a x -b)- \frac{\beta_x}{2 \tau}(x - x^*)^2 \\
& = \frac \mu 2 \bar x^2 + y^* (a\bar x -b) + \frac{\sigma}{2\beta_y}(a\bar x - b)^2 + b \bar y + \frac{1}{2(\frac{\beta_x}{\tau}+\mu)}(\frac{\beta_x}{\tau} x^*+a \bar y)^2 - \frac{\beta_x}{2\tau} (x^*)^2 \\
& \geq \frac{\mu \tau + \frac{\sigma \tau a^2}{\beta_y}}{2\tau}  (\bar x - x^*)^2 + \frac{\sigma \tau a^2}{2\sigma (\beta_x +\mu \tau)} (\bar y - y^*)^2 \\
& \geq \frac 12 \min\Big(\mu \tau + \frac{\sigma \tau a^2}{\beta_y}, \frac{\sigma \tau a^2}{\beta_x +\mu \tau} \Big) \|\bar z - z^*\|_V^2
\end{align*}

\revisionone{As we have seen in Proposition~\ref{prop:qeb4strconv_aff}, we can leverage the strong convexity of the objective. But also the smoothed gap may enjoy a quadratic error bound even if the objective is not strongly convex.}

According to Theorem~\ref{thm:rate_pdhg_qebsm}, since $2+a_2 = \frac{1}{ 1 - \tau \mu_f / 2}$, the rate is $(1+\rho)^{-1}$ where 
\[
\rho =  \Lambda \frac{\eta}{1+\eta/\lambda} = \frac{\lambda}{\max(\lambda \frac{\tau \mu_f}{ 1 - \tau \mu_f / 2} + 1/\beta_x, \lambda\frac{1+\tau \mu_f / 2}{ 1 - \tau \mu_f / 2} + 1/\beta_y)} \frac{\min\Big(\mu \tau + \frac{\sigma \tau a^2}{\beta_y}, \frac{\sigma \tau a^2}{\beta_x +\mu \tau} \Big)}{1+\min\Big(\mu \tau + \frac{\sigma \tau a^2}{\beta_y}, \frac{\sigma \tau a^2}{\beta_x +\mu \tau} \Big)/\lambda} \;.
\]
with $\lambda = (1-\mu\tau/2)(1-\sqrt{\sigma \tau a^2})$.
Since the algorithm does not depend on $\beta_x$ or $\beta_y$ we can choose them so that they minimize the rate (or maximize $\rho$).
On Figure~\ref{fig:comp_rates_theory2}, we can see that the rate of convergence explained using the quadratic error bound of the smoothed gap is as good as the rate using strong convexity (Assumption~\ref{ass:strconv}) when $\mu$ is large and does not vanish when $\mu$ goes to 0. On top of this, for small values of $\mu$, we obtain a much better rate than what is predicted using metric sub-regularity.

\revisionone{
In Appendix~\ref{sec:strconv+smqeb}, Proposition~\ref{prop:strconv+smqeb}, we derive a finer analysis in the case where we solve a linearly constrained problem whose objective function is strongly convex. 
Indeed, we can show that the largest singular value of the matrix $R$ described in Section~\ref{sec:toy_quadratic} is $1-\gamma$. Yet, its spectral radius is much smaller. This implies that a contraction on $\dist_V(z_{k} - z^*)^2$ is not enough to account for the actual rate. We propose to combine it with a contraction on $\|z_{k+1} - z_k\|^2_V$. The rationale for this addition is that for large strong convexity parameters, the primal sequence will behave as if it were tracking $\arg\min_{x'} L(x', y_k)$. This is a kind of slow-fast system where the dual variable is slowly varying and the primal variable is fast.
}

When we plot the curve of the rate as a function of $\mu_f$ (with the legend ``slow-fast double concentration rate'') we can see that this more complex analysis manages to explain the improvement of the rate for an increasing strong convexity parameter, together with its degradations when the parameter becomes too large.

\begin{figure}[htb]
	\centering
	
	\includegraphics[width=0.7\linewidth]{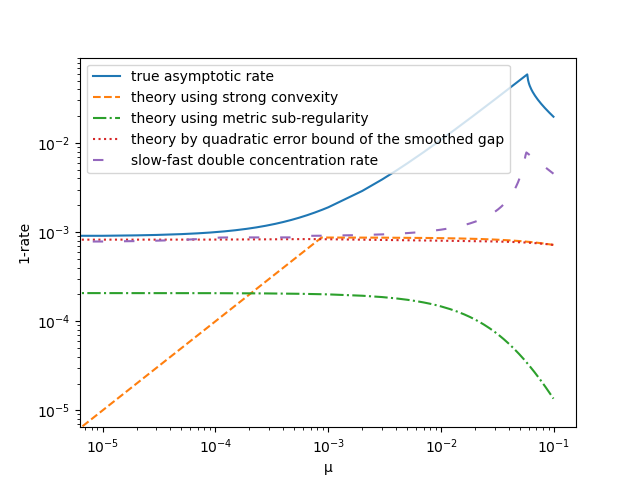}
	\caption{Comparison of the true rate $\rho$ (line above), what is predicted by theory using previous theories and what is predicted by using quadratic error bound of the smoothed gap  for $a = 0.03$, $\tau = \sigma=1$ and various values for $\mu$. We plot $1-\rho$ in logarithmic scale.}
	\label{fig:comp_rates_theory2}
\end{figure}

\section{Restarted averaged primal-dual hybrid gradient}
\label{sec:restart}

\subsection{Restarted Averaged Primal-Dual Hybrid Gradient}

In this section we will see how our new understanding of the rate of convergence of PDHG can help us design a faster algorithm.

Let averaged PDHG be given by Algorithm \ref{APDHG}. On the class of convex functions, averaged PDHG has an improved convergence speed in $O(1/k)$ in the worst case while PDHG has a convergence in $O(1/\sqrt k)$~\cite{davis2016convergence}.
\begin{algorithm}[htbp]
\flushleft	For $k \in \{0, \ldots, K-1\}$:
	\begin{align*}
	&\bar x_{k+1} = \prox_{\tau f}(x_k \bl{-\tau \nabla f_2(x_k)}- \tau A^\top y_k) \\
	&\bar y_{k+1} = \prox_{\sigma g^*}(y_k \bl{- \sigma\nabla g_2^*(y_k)} + \sigma A \bar x_{k+1}) \\
	& x_{k+1} = \bar x_{k+1} - \tau A^\top (\bar y_{k+1} - y_k) \\
	& y_{k+1} = \bar y_{k+1} \\
	&\tilde x_{k+1} = \tfrac{1}{k+1} \sum_{l=0}^{k} \bar x_{l+1}  \qquad \tilde y_{k+1} = \tfrac{1}{k+1} \sum_{l=0}^{k} \bar y_{l+1}
	\end{align*}
	Return $(\tilde x_{K}, \tilde y_{K})$
	\caption{Averaged Primal Dual Hybrid Gradient -- $\mathrm{APDHG}(x_0, y_0, K)$}
	\label{APDHG}	
\end{algorithm}

However, when averaging, we loose the linear convergence for well behaved problems.
We thus propose to restart the algorithm as in Algorithm \ref{RAPDHG}. The following proposition shows that RAPDHG enjoys an improved rate of convergence where the product $\beta \eta$ is replaced by $\max(\beta, \eta)$. Hence for problems where $\eta(\beta)$ is a decreasing function of $\beta$, like linear programs, we will expect an improved convergence rate by averaging and restarting.
\begin{algorithm}[htbp]
\flushleft	Let $K \in \mathbb N$ and $z_0 = (x_0, y_0)$.
	
	For $s \geq 0$:
	\begin{align*}
	z_{s+1} = \mathrm{APDHG}(z_s, K)
	\end{align*}
	\caption{Restarted Averaged Primal Dual Hybrid Gradient -- $\mathrm{RAPDHG}(x_0, y_0)$}
	\label{RAPDHG}	
\end{algorithm}

\begin{proposition}
	\label{prop:conv_rapdhg}
	Under Assumption~\ref{ass:qeb} with $\beta_x = \beta_y =\beta$,  if the restart frequency $K$ satisfies $K\beta \geq 2$ and $K\eta \geq \revisionone{2(2+a_2^+)}/\eta$, where \revisionone{$a_2^+ = \max(0, a_2)$ and $a_2$ is defined in Lemma~\ref{lem:lag_ineq}}, then RAPDHG converges linearly at a rate
$2^{-1/K}$.
	Moreover, if $K = \lceil \max(2/\beta, \revisionone{2(2+a_2^+)}/\eta/\eta) \rceil$, then the rate is $\exp\Big(- \frac{1}{\lceil \max(2/\beta, \revisionone{2(2+a_2^+)}/\eta/\eta) \rceil}\ln(2) \Big)\approx \exp\Big(- \min(\beta/2, \eta/\revisionone{(2(2+a_2^+))}) \ln(2) \Big)$.
\end{proposition}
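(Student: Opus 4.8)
The plan is to reduce everything to a single statement: one outer step of RAPDHG, i.e. one call to $\mathrm{APDHG}(z_s, K)$, contracts the squared distance to the solution set by a factor of at least one half, $\dist_V(z_{s+1}, \mathcal Z^*)^2 \le \tfrac12 \dist_V(z_s, \mathcal Z^*)^2$. Once this is established, a trivial induction over $S$ restarts gives $\dist_V(z_S, \mathcal Z^*)^2 \le 2^{-S}\dist_V(z_0, \mathcal Z^*)^2$, and since one restart consumes $K$ inner iterations, after $n = SK$ iterations the squared distance is multiplied by $2^{-S} = (2^{-1/K})^{n}$; this is the claimed linear rate $2^{-1/K}$. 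The ``moreover'' part is then just a matter of taking the smallest $K$ compatible with the two thresholds, $K = \lceil \max(2/\beta,\, 2(2+a_2^+)/\eta) \rceil$, and writing $2^{-1/K} = \exp(-\tfrac{\ln 2}{K})$ with $1/K \approx \min(\beta/2,\, \eta/(2(2+a_2^+)))$.

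For the contraction I fix the anchor $z^* = P_{\mathcal Z^*}(z_s)$ and reset the inner counter so that $z_0 = z_s$. I start from the one-step inequality of Lemma~\ref{lem:lag_ineq}, sum it over $k = 0, \dots, K-1$, telescope the terms $\tfrac12\|z'-z_k\|_V^2 - \tfrac12\|z'-z_{k+1}\|_V^2$, and pass to the average via Jensen, using convexity of $x \mapsto L(x, y')$ and concavity of $y \mapsto L(x', y)$ to get, for every test point $z' = (x', y')$, the bound $L(\tilde x_K, y') - L(x', \tilde y_K) \le \tfrac1{2K}\|z' - z_s\|_V^2 - \tfrac1{2K}\|z' - z_K\|_V^2 + \tfrac{a_2}{K}\sum_{k} \tilde V(z_k, z^*)$. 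The only term that is not immediately harmless is the $a_2$ correction; I control it by applying the averaged-operator inequality \eqref{eq:nonexpansive_with_vtilde} with $z = z_k$ and $z' = z^*$ (a fixed point of $T$), which telescopes into $2\sum_{k} \tilde V(z_k, z^*) \le \|z_0 - z^*\|_V^2 = \dist_V(z_s, \mathcal Z^*)^2$. Whether $a_2$ is positive or negative this yields $\tfrac{a_2}{K}\sum_{k} \tilde V(z_k,z^*) \le \tfrac{a_2^+}{2K}\dist_V(z_s, \mathcal Z^*)^2$, and after dropping the nonpositive $-\tfrac1{2K}\|z'-z_K\|_V^2$ I obtain, uniformly in $z'$,
\[
L(\tilde x_K, y') - L(x', \tilde y_K) \le \tfrac1{2K}\|z' - z_s\|_V^2 + \tfrac{a_2^+}{2K}\dist_V(z_s, \mathcal Z^*)^2 .
\]

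The decisive step is to turn this estimate on the unrestricted gap into one on the smoothed gap $G_\beta(\tilde z_K; z^*)$, since that is the quantity Assumption~\ref{ass:qeb} controls. Subtracting $\tfrac\beta2\|z'-z^*\|_V^2$ from both sides and taking the supremum over $z'$ produces on the right a supremum of a difference of two quadratics centered at $z_s$ and $z^*$; this supremum is finite exactly when $K\beta > 1$, and completing the square evaluates it to $\tfrac{\beta}{2(K\beta - 1)}\dist_V(z_s, \mathcal Z^*)^2$. Hence
\[
G_\beta(\tilde z_K; z^*) \le \Big(\tfrac{\beta}{2(K\beta-1)} + \tfrac{a_2^+}{2K}\Big)\dist_V(z_s, \mathcal Z^*)^2 .
\]
Invoking the quadratic error bound $\tfrac\eta2\dist_V(\tilde z_K, \mathcal Z^*)^2 \le G_\beta(\tilde z_K; z^*)$ and using $K\beta \ge 2$, which gives $\tfrac{\beta}{K\beta-1} \le \tfrac2K$, the one-epoch contraction factor is at most $\tfrac{2+a_2^+}{\eta K}$, and this is $\le \tfrac12$ as soon as $K\eta \ge 2(2+a_2^+)$. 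This is precisely the promised improvement over Theorem~\ref{thm:rate_pdhg_qebsm}: here the contraction is governed by $\max(1/\beta, 1/\eta)$, whereas the nonrestarted rate degrades with the product $\beta\eta$.

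I expect the main difficulty to be a matter of legitimacy rather than computation, and it has two facets. First, applying the QEB at $\tilde z_K$ requires $\tilde z_K \in \mathcal R$; I would handle this by an induction showing that no restart increases $\dist_V(\cdot, \mathcal Z^*)$, so the entire run --- and in particular each averaged iterate --- stays inside the region determined by the initial distance, exactly as the region hypothesis of Theorem~\ref{thm:rate_pdhg_qebsm} is phrased, letting the single constant $\eta$ serve every epoch. Second, the correction term $a_2$ is now summed over a whole epoch rather than appearing once, so the bookkeeping only closes if the anchor telescoping the $\tilde V$ terms through \eqref{eq:nonexpansive_with_vtilde} is the same point $z^*=P_{\mathcal Z^*}(z_s)$ used to center the smoothed gap; keeping these two roles of $z^*$ aligned, and verifying via Lemma~\ref{lem:tildev} that the residual $\|z_{k+1}-z_k\|_V^2$ contributions are absorbed, is where the argument must be handled with care.
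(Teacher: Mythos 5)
Your proof is correct and follows essentially the same route as the paper's: both sum the per-iteration inequality of Lemma~\ref{lem:lag_ineq} over one epoch, absorb the $a_2$ correction via \eqref{eq:nonexpansive_with_vtilde} anchored at $z^* = P_{\mathcal Z^*}(z_s)$ (you telescope that inequality separately and then combine, the paper combines it per-iteration before telescoping --- an immaterial reordering), evaluate the same supremum $\frac{\beta}{2(K\beta-1)}\dist_V(z_s,\mathcal Z^*)^2$ to bound $G_\beta(\tilde z_K;z^*)$, and then invoke the quadratic error bound with the thresholds $K\beta \geq 2$ and $K\eta \geq 2(2+a_2^+)$ to obtain the halving of $\dist_V(\cdot,\mathcal Z^*)^2$ per restart. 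Your additional remark about keeping the averaged iterates inside the region $\mathcal R$ via the non-increase of $\dist_V(\cdot,\mathcal Z^*)$ is a point the paper glosses over, but it does not alter the argument.
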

\begin{proof}
Let us denote by $(z_s^R)_{s \in \mathbb N}$ the iterates of RAPDHG. We keep the notation $z_k, \bar z_k$ for the iterates of the inner loop.

\revisionone{Consider $z^* \in \mathcal Z^*$ and denote $a_2^+ = \max(0, a_2)$. We combine \eqref{ineq_on_lagrangians} with $a_2^+/2$ times \eqref{eq:nonexpansive_with_vtilde} to get
\begin{align*}
L(\bar x_{k+1}, y) - L(x, \bar y_{k+1})&\leq 
\frac 12 \|z - z_k\|^2_{V} -\frac 12 \|z - z_{k+1}\|^2_V + \frac {a_2^+} 2 \|z^* - z_k\|^2_{V} -\frac {a_2^+}2 \|z^* - z_{k+1}\|^2_V  +(a_2-a_2^+) \tilde V(z_k, z^*) \;.
\end{align*}
}
Summing this inequality for $k$ between $0$ and $K-1$, using the fact that the Lagrangian is convex-concave, \revisionone{and that $a_2-a_2^+ \leq 0$,} we get
	\begin{align*}
	L(\tilde x_{K}, y) - L(x, \tilde y_{K}) \leq 
	\frac 1{2K} \|z - z_0\|^2_V -\frac 1{2K} \|z - z_{K}\|^2_V \revisionone{+ \frac {a_2^+}{2K} \|z^* - z_0\|^2_V -\frac {a_2^+}{2K} \|z^* - z_{K}\|^2_V}
	\end{align*}
	which leads to
	\begin{align*}
	L(\tilde x_{K}, y) - L(x, \tilde y_{K}) - \frac{\beta}{2}\|z -z^*\|^2_V  \leq 
	\frac 1{2K} \|z - z_0\|^2_V - \frac{\beta}{2}\|z -z^*\|^2_V \revisionone{+ \frac {a_2^+}{2K} \|z^* - z_0\|^2_V }
	\end{align*}
	and so, as soon as $K\beta > 1$, since the maximum of the right hand side is attained at $z = \frac{K\beta z^* - z_0}{K\beta -1}$,
	\begin{align*}
	G_{\beta}(\tilde z_K, z^*) \leq \frac 1{2K}\Big(\frac{K\beta}{K\beta - 1}\revisionone{+a_2^+}\Big) \|z^* - z_0\|^2_V  \;.
	\end{align*}
	We now use Assumption~\ref{ass:qeb} to get
	\begin{align*}
	\frac 1{K}\Big(\frac{K\beta}{K\beta - 1}\revisionone{+a_2^+}\Big) \|z^* - z_0\|^2_V \geq \eta \|z^* - \tilde z_K\|^2 \;.
	\end{align*}
	We choose $z^* = P_{\mathcal Z^*}(z_0)$ and $K$ such that $K\beta \geq 2$ and $K\eta \geq \revisionone{2(2+a_2^+)}$ in order to get
	\[
	\dist_V( z_1^R, \mathcal Z^*)^2 = \dist_V(\tilde z_K, \mathcal Z^*)^2 \leq \frac{1}{2} \dist_V(z_0, \mathcal Z^*)^2 \;.
	\]
If we choose $K = \lceil \max(2/\beta, \revisionone{2(2+a_2^+)}/\eta) \rceil$ we thus get a linear convergence
	\begin{align*}
\dist_V(z_s^R, \mathcal Z^*)^2 &\leq \frac{1}{2^s} 
\dist_V(\tilde z_0^R, \mathcal Z^*)^2	\\
& \leq
\exp\Big(- \frac{1}{\lceil \max(2/\beta, \revisionone{2(2+a_2^+)}/\eta) \rceil}\ln(2) \Big)^{sK}
\dist_V( z_0, \mathcal Z^*)^2
\end{align*}
where $sK$ is the total number of iterations.
\end{proof}

\revisionone{
The rate of convergence of RAPDHG has two nice features as compared to plain PDHG. Indeed, there is a factor $\Lambda$ in Theorem~\ref{thm:rate_pdhg_qebsm} in front of the quadratic error bound constant $\eta$, which is of order $\lambda \beta$ when $\beta$ is small. On the other hand, the rate of RAPDHG has no direct dependence on $\lambda$, which means that it will behave well even if $\sigma \tau \|A\|^2$ is close to 1. Moreover, it replaces $\beta \eta$ by $\min(\beta, \eta)$, which will be orders of magnitude better in the case of linear programs where $\eta = O(\beta)$ for $\beta = 1/\theta$ (Proposition~\ref{prop:rate_qebsm})
}

\subsection{Self-centered smoothed gap}

In this paper, we have shown that the smoothed gap is a convenient quantity for the analysis of PDHG and that assuming that it satisfies a quadratic error bound condition explains well its behaviour. However, since computing it requires the knowledge of a saddle point, one cannot use the smoothed gap for algorithmic design, and in particular for the tuning of RAPDHG.

We thus propose the following approximation, that we call the self-centered smoothed gap. 
\begin{definition}
	Given $\beta = (\beta_x, \beta_x) \in [0, +\infty[^2$, and $z \in \mathcal Z$, the self-centered smoothed gap is given by $G_\beta(z,z)$.
\end{definition}
The motivation for this definition is the following lemma.
\begin{lemma}
	\label{lem:self-centered}
	For all $z,\dot z \in \mathcal Z$ and $z^*$ equal to the projection of $\dot z$ onto $\mathcal Z^*$,  
	\begin{equation}
	G_\beta(z, \dot z) \geq G_{2\beta}(z, z^*) - \beta \dist_V(\dot z, \mathcal Z^*)^2_V\;.\label{eq:approximate_gap}
	\end{equation}	
\end{lemma}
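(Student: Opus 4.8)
The plan is to bound the quadratic penalty centered at $\dot z$ from below by a doubled penalty centered at $z^*$, and then recognize the resulting supremum as $G_{2\beta}(z;z^*)$. Since the lemma is stated with a scalar $\beta$ (as in Proposition~\ref{prop:conv_rapdhg}), the penalty in the definition of the smoothed gap is $\frac{\beta_x}{2\tau}\|x'-\dot x\|^2 + \frac{\beta_y}{2\sigma}\|y'-\dot y\|^2 = \frac{\beta}{2}\|z'-\dot z\|_V^2$. Writing $\ell(z,z') = L(x,y') - L(x',y)$ for the gap integrand, the definition reads $G_\beta(z;\dot z) = \sup_{z'} \ell(z,z') - \frac{\beta}{2}\|z'-\dot z\|_V^2$, and similarly $G_{2\beta}(z;z^*) = \sup_{z'}\ell(z,z') - \beta\|z'-z^*\|_V^2$.

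First I would apply the elementary inequality $\|a+b\|_V^2 \leq 2\|a\|_V^2 + 2\|b\|_V^2$ with $a = z'-z^*$ and $b = z^* - \dot z$, giving $\|z'-\dot z\|_V^2 \leq 2\|z'-z^*\|_V^2 + 2\|z^*-\dot z\|_V^2$. Multiplying by $\beta/2$ and negating yields $-\frac{\beta}{2}\|z'-\dot z\|_V^2 \geq -\beta\|z'-z^*\|_V^2 - \beta\|z^*-\dot z\|_V^2$. Adding $\ell(z,z')$ to both sides and taking the supremum over $z' \in \mathcal Z$, the constant $-\beta\|z^*-\dot z\|_V^2$ factors out of the supremum, so that $G_\beta(z;\dot z) \geq \sup_{z'}\big[\ell(z,z') - \beta\|z'-z^*\|_V^2\big] - \beta\|z^*-\dot z\|_V^2$.

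The observation that closes the argument is that the remaining supremum is precisely $G_{2\beta}(z;z^*)$: the penalty $\beta\|z'-z^*\|_V^2$ is exactly the smoothing penalty for parameter $2\beta$ centered at $z^*$, so the doubling of $\beta$ compensates the factor $2$ produced by the quadratic-splitting inequality. Finally, since $z^*$ is the $\|\cdot\|_V$-projection of $\dot z$ onto $\mathcal Z^*$, we have $\|z^*-\dot z\|_V = \dist_V(\dot z,\mathcal Z^*)$, which converts the constant into $\beta\dist_V(\dot z,\mathcal Z^*)^2$ and gives the claim. There is essentially no obstacle beyond matching the constants: the only point to check is that the factor $2$ in $\|a+b\|_V^2 \leq 2\|a\|_V^2 + 2\|b\|_V^2$ lines up with the change from $\beta$ to $2\beta$, which is exactly what makes this particular pairing of the two arguments work, rather than any other split.
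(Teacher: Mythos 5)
Your proof is correct and follows essentially the same route as the paper's: the paper's one-line argument is exactly the splitting $\frac{\beta}{2}\|z'-\dot z\|_V^2 \leq \beta\|z'-z^*\|_V^2 + \beta\|z^*-\dot z\|_V^2$ inside the supremum, followed by recognizing $\beta\|z'-z^*\|_V^2$ as the $2\beta$-penalty and using that $z^*$ is the projection of $\dot z$. Nothing is missing.
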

\begin{proof}
	\begin{align}
	G_\beta(z,\dot z) &=\max_{z'} L(x, y') - L(x',y) -\frac{\beta}{2}\|\dot z-z'\|^2_V \notag \\
	&\geq \max_{z'} L(x, y') - L(x',y) -\beta\|z^*-z'\|^2_V-\beta\|\dot z-z^*\|^2_V \notag \\
	&= G_{2\beta}(z, z^*) - \beta \|\dot z - z^*\|^2_V = G_{2\beta}(z, z^*) - \beta \dist_V(\dot z, \mathcal Z^*)^2_V \qedhere
	\end{align}
\end{proof}
This shows that $G_\beta(z,\dot z)$ is a good approximation to the measure of optimality $G_{2\beta}(z, z^*)$ as soon as $\beta$ is small enough or $\dot z$ is close enough to $z^*$.
It happens that for $\dot z = z$, we can prove even more.
\begin{proposition}
	\label{prop:qebscsm}
	The self-centered smoothed gap is a measure of optimality. Indeed, $\forall z \in \mathcal Z$, $\forall \beta \in [0, +\infty[^2$:
	\begin{enumerate}[\bf i]
		\item $G_\beta(z,z) \geq 0$.
		
		\item $G_\beta(z,z) = 0 \Leftrightarrow z \in \mathcal Z^*$.
		
		\item For $z^* = P_{\mathcal Z^*}(z)  \in \mathcal Z^*$, if $G_\beta(z, z^*) \geq \frac{\eta}{2}\dist_V(z, \mathcal Z^*)^2$, then we have $G_{\beta'}(z,z) \geq \frac{\eta'}{2}\dist_V(z, \mathcal Z^*)^2$ where $\beta' = \min(\beta/2, \eta/4)$ and $\eta' = \eta/2$.
	\end{enumerate}
\end{proposition}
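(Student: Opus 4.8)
The plan is to treat the three claims separately, exploiting the fact that for a \emph{self}-centered gap the supremum defining $G_\beta(z,z)$ decouples into a primal and a dual part. Claim \textbf{i} is immediate: evaluating the supremum in the definition of $G_\beta(z,z)$ at the feasible point $z' = z$ makes the Lagrangian terms cancel and both quadratic penalties vanish, so $G_\beta(z,z) \geq 0$.

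For claim \textbf{ii}, the key observation is that, for fixed $z = (x,y)$, the objective $L(x,y') - L(x',y) - \frac{\beta_x}{2\tau}\|x'-x\|^2 - \frac{\beta_y}{2\sigma}\|y'-y\|^2$ is separately concave in $y'$ and in $x'$ with no cross term, so the supremum splits as $G_\beta(z,z) = D(z) + P(z)$, where
\[
D(z) = \sup_{y'}\Big(L(x,y') - \tfrac{\beta_y}{2\sigma}\|y'-y\|^2\Big) - L(x,y), \qquad P(z) = L(x,y) - \inf_{x'}\Big(L(x',y)+\tfrac{\beta_x}{2\tau}\|x'-x\|^2\Big).
\]
Evaluating each inner problem at $y'=y$ (resp. $x'=x$) shows $D(z)\geq 0$ and $P(z)\geq 0$. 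For the forward implication I would use that a saddle point $z=z^*$ satisfies $L(x,y') \leq L(x,y) \leq L(x',y)$ for all $z'$, so the objective is nonpositive and $G_\beta(z,z)\leq 0$, which combined with claim \textbf{i} gives equality. For the converse, $G_\beta(z,z)=0$ forces $D(z)=P(z)=0$; since each quadratic penalty has vanishing gradient at its center, the first-order optimality conditions of the two strongly concave/convex inner problems reduce to $0 \in \partial_y L(x,y)$ and $0 \in \partial_x L(x,y)$, i.e. $x$ minimizes $L(\cdot,y)$ and $y$ maximizes $L(x,\cdot)$. This is exactly the saddle-point condition $0 \in \tilde\partial L(z)$, hence $z \in \mathcal Z^*$.

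Claim \textbf{iii} is where Lemma~\ref{lem:self-centered} does the work. Applying it with $\dot z = z$ gives $G_{\beta'}(z,z) \geq G_{2\beta'}(z,z^*) - \beta' \dist_V(z,\mathcal Z^*)^2$. Because $\beta' = \min(\beta/2,\eta/4)$ yields $2\beta' \leq \beta$, the monotonicity of $\beta \mapsto G_\beta$ (the subtracted penalty only grows with $\beta$, as already recorded in the proof of Proposition~\ref{prop:sm_opt_meas}) gives $G_{2\beta'}(z,z^*) \geq G_\beta(z,z^*) \geq \frac{\eta}{2}\dist_V(z,\mathcal Z^*)^2$. Combining this with $\beta' \leq \eta/4$ leaves $G_{\beta'}(z,z) \geq \big(\frac{\eta}{2}-\frac{\eta}{4}\big)\dist_V(z,\mathcal Z^*)^2 = \frac{\eta'}{2}\dist_V(z,\mathcal Z^*)^2$ with $\eta'=\eta/2$.

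I expect the main obstacle to be the converse of claim \textbf{ii}: carefully justifying that the decoupled inner suprema being attained exactly at the center translates into subgradient optimality conditions for $L$ itself rather than for the penalized objective, and that these two one-sided conditions together characterize membership in $\mathcal Z^*$. The remaining claims reduce to the monotonicity of the smoothed gap and a direct application of Lemma~\ref{lem:self-centered}.
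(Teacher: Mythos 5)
Your proposal is correct and follows essentially the same route as the paper: claim \textbf{i} by evaluating the supremum at the center, claim \textbf{ii} by using strong concavity of the penalized inner problem to force its maximizer to coincide with the center and then reading off the first-order conditions $0 \in \tilde\partial L(z)$, and claim \textbf{iii} by combining Lemma~\ref{lem:self-centered} (with $\dot z = z$) with the monotonicity $G_{2\beta'}(z,z^*) \geq G_{\beta}(z,z^*)$ for $2\beta' \leq \beta$. Your explicit primal/dual decoupling of $G_\beta(z,z)$ in claim \textbf{ii} is only a cosmetic rewriting of the paper's joint argument (the paper's function $\Phi$ is exactly the sum of your two inner objectives), so no genuinely different ideas are involved.
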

\begin{proof}
	The function $\Phi: z' \mapsto L(x, y') - L(x',y) -\frac{\beta}{2}\|z-z'\|^2_V$ is $\beta$-strongly concave in the norm $\|\cdot\|_V$ so for $z^*_\beta(z) = \arg\max \Phi$, we have
	\begin{align*}
	G_\beta(z,z) = \max_{z'} \Phi(z') \geq \Phi(z) + \frac{\beta}{2} \|z^*_\beta(z) - z\|^2_V
	\;.\end{align*}
	Using the fact that $\Phi(z) = 0$ gives point $\mathbf i$.
	
	For the second point, it is clear by Proposition~\ref{prop:sm_opt_meas} that $G_\beta(z^*, z^*) = 0$. For the converse implication, we shall do the proof only for $\beta>0$ because $G_0(z,z)$ is the usual duality gap.
	\begin{align*}
	G_\beta(z,z) = 0 \; \Rightarrow \;  \frac{\beta}{2} \|z^*_\beta(z) - z\|^2_V = 0 \; \Rightarrow \; z^*_\beta(z) = z  \;  \Rightarrow \; 
	\begin{cases} 0 \in - \partial_x L(x,y) - \frac{\beta}{\tau}(x-x) \\
	0 \in - \partial_y (-L)(x,y) -   \frac{\beta}{\sigma}(y-y)\end{cases}
	\Rightarrow z \in \mathcal Z^*
	\end{align*}
	so that point $\mathbf{ii}$ holds.
	
	Finally, suppose that $G_\beta(z, z^*) \geq \frac{\eta}{2}\dist_V(z, \mathcal Z^*)^2$. 
	Since $\beta' =  \min(\beta/2, \eta(\beta)/4) \leq \beta/2$, we have $G_{2\beta'}(z,z^*) \geq G_{\beta}(z,z^*)$.
	Using Lemma~\ref{lem:self-centered}, we have
	\begin{align*}
	G_{\beta'}(z,z)& \geq G_{2\beta'}(z, z^*) - \beta' \dist_V(z, \mathcal Z^*)^2 \geq G_{\beta}(z,z^*) - \beta' \dist_V(z, \mathcal Z^*)^2 \geq \big( \frac{\eta}{2} - \beta'\big) \dist_V(z, \mathcal Z^*)^2 \\
	&\geq  \frac{\eta}{4} \dist_V(z, \mathcal Z^*)^2 \;. \qedhere
	\end{align*}
	
\end{proof}

In the numerical experiment section, we shall use the self-centered smoothed gap as a stopping criterion with $\beta = (0, \delta)$ where $\delta$ is the dual infeasibility.

\subsection{Adaptive restart}


We now modify RAPDHG so that instead of using unknown quantities $\beta$ and $\eta$ to set the restart period $K$, we monitor the self-centered smoothed gap and restart when this quantity has been halved.
In order to take into account cases where averaging is detrimental, we then compare $\tilde z_{k}$ and $\bar z_k$ and restart at the best of these in terms of smoothed gap. This adaptive restart is formalized in Algorithm~\ref{alg:adaptive} and justified by the following proposition.
\begin{proposition} 
Suppose that Assumption~\ref{ass:qeb} holds, i.e., there exists $\beta, \eta$ such that for all $z^*\in \mathcal Z^*$ and $z$ verifying $\dist_V(z, \mathcal Z^*)\leq \dist_V(z_0, \mathcal Z^*)$ we have
$G_\beta(z; z^*) \leq \frac{\eta}{2}\dist_V(z, \mathcal Z^*)$. Denote $\eta'(\beta') = 0$ if $\beta' \geq \min(\beta/2, \eta/4)$ and $\eta'(\beta') = \eta$ otherwise. Then, as soon as $\beta_s \leq \min(\beta/2, \eta/4)$
the iterates of Algorithm~\ref{alg:adaptive} satisfy for all $\beta' \in ]0, +\infty[$,
\begin{align*}
G_{\beta'}(\tilde z_k, \tilde z_k) \leq \frac{2}{(k-s) \eta'(\beta_s)} \big(2+a_2^+ + \frac{2}{(k-s) \beta'}\big) G_{\beta_s}(z_s, z_s) \;.
\end{align*}
\revisionone{where $a_2^+ = \max(0, a_2)$ and $a_2$ is defined in Lemma~\ref{lem:lag_ineq}}.
\end{proposition}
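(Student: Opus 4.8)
The plan is to transport the ergodic analysis of Proposition~\ref{prop:conv_rapdhg} to the inner loop of the adaptive scheme, but to smooth the gap around the running average $\tilde z_k$ itself rather than around an unknown saddle point; this makes the self-centered gap $G_{\beta'}(\tilde z_k,\tilde z_k)$ appear directly on the left-hand side. The distances to $\mathcal Z^*$ that are produced are then converted into the monitored quantity $G_{\beta_s}(z_s,z_s)$ through the self-centered error bound. Write $K=k-s$, fix $z^*=P_{\mathcal Z^*}(z_s)$ and $D=\dist_V(z_s,\mathcal Z^*)^2=\|z_s-z^*\|_V^2$, and use that the restart scheme keeps $\dist_V(z_s,\mathcal Z^*)\le\dist_V(z_0,\mathcal Z^*)$, so that the error bound of Assumption~\ref{ass:qeb} is available at $z_s$.

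First I would reproduce the ergodic inequality used in Proposition~\ref{prop:conv_rapdhg}: combining \eqref{ineq_on_lagrangians} with $\tfrac{a_2^+}2$ times \eqref{eq:nonexpansive_with_vtilde}, summing over the inner iterations and using that $L$ is convex-concave, one gets for every $z'=(x',y')$
\[
L(\tilde x_k,y')-L(x',\tilde y_k)\le\frac1{2K}\|z'-z_s\|_V^2-\frac1{2K}\|z'-z_k\|_V^2+\frac{a_2^+}{2K}D.
\]
Subtracting the self-centered penalty $\tfrac{\beta'}2\|z'-\tilde z_k\|_V^2$ and taking the supremum over $z'$ turns the left-hand side into $G_{\beta'}(\tilde z_k,\tilde z_k)$ by definition, while the right-hand side is a concave quadratic in $z'$ with leading coefficient $-\beta'/2$, hence has a finite maximum for every $\beta'>0$. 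When $K\beta'>1$ one may drop the nonpositive $-\tfrac1{2K}\|z'-z_k\|_V^2$ term and complete the square, obtaining
\[
G_{\beta'}(\tilde z_k,\tilde z_k)\le\frac{\beta'\,\|\tilde z_k-z_s\|_V^2}{2(K\beta'-1)}+\frac{a_2^+}{2K}D;
\]
for the remaining small-$\beta'$ range one keeps the $z_k$ term (which alone guarantees finiteness of the maximum) at the cost of extra cross terms, which are controlled because the $\tfrac2{K\beta'}$ appearing in the target is then large.

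Next, since $\beta_s\le\min(\beta/2,\eta/4)$, Lemma~\ref{lem:self-centered} together with point (iii) of Proposition~\ref{prop:qebscsm} (whose argument extends verbatim to any $\beta_s\le\min(\beta/2,\eta/4)$) yields $G_{\beta_s}(z_s,z_s)\ge\tfrac\eta4\,D$, i.e. $D\le\tfrac4\eta\,G_{\beta_s}(z_s,z_s)$, and in this regime $\eta'(\beta_s)=\eta$. Using $\tfrac{K\beta'}{K\beta'-1}\le1+\tfrac2{K\beta'}$ for $K\beta'\ge2$, a displacement bound $\|\tilde z_k-z_s\|_V^2\le C\,D$ would give $G_{\beta'}(\tilde z_k,\tilde z_k)\le\tfrac{D}{2K}\big(C+a_2^++\tfrac{2C}{K\beta'}\big)$, and substituting $D\le\tfrac4\eta G_{\beta_s}(z_s,z_s)$ produces exactly $\tfrac2{K\eta}\big(C+a_2^++\tfrac{2C}{K\beta'}\big)G_{\beta_s}(z_s,z_s)$. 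Since the statement carries $2+a_2^++\tfrac2{K\beta'}$, this closes the proof once $C\le1$; keeping the sharper factor $\tfrac{K\beta'}{K\beta'-1}$ instead of its crude bound shows that $C\le3/2$ already suffices on $K\beta'\ge2$.

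The main obstacle is precisely this displacement estimate $\|\tilde z_k-z_s\|_V^2\le C\,D$ with $C$ close to $1$. The operator $T$ is nonexpansive in $\|\cdot\|_V$ (Lemma~\ref{lem:averaged_operator}), so $\|z_l-z^*\|_V\le\sqrt D$ for all $l$; combined with Jensen's inequality for $\tilde z_k=\tfrac1K\sum_l\bar z_{l+1}$ and the control of $\bar z_{l+1}-z_{l+1}$ in Lemma~\ref{lem:bars2nobars}, this is the natural route to bounding the average. The delicate point is that nonexpansiveness only confines $z_l$, and through the triangle inequality $\tilde z_k$, to the ball $B_V(z^*,\sqrt D)$, which by itself yields merely $\|\tilde z_k-z_s\|_V\le2\sqrt D$, i.e. $C=4$; reaching the sharp constant requires showing that the averaged (and the intermediate $\bar z$) sequence does not drift to the far side of $z^*$, together with a uniform treatment of all $\beta'$, in particular the small-$\beta'$ regime where the $z_k$-term must be retained. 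I expect this quantitative displacement bound, rather than any of the gap manipulations, to be the crux of the argument.
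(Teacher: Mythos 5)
Your opening steps coincide with the paper's own proof: the ergodic inequality with the $a_2^+$ correction, then subtracting $\frac{\beta'}{2}\|z'-\tilde z_k\|_V^2$ and taking the supremum so that $G_{\beta'}(\tilde z_k,\tilde z_k)$ appears on the left. The divergence --- and the gap --- is in how you evaluate that supremum. You discard the term $-\frac{1}{2K}\|z'-z_k\|_V^2$ (with your notation $K=k-s$) and complete the square in what remains; this forces the restriction $K\beta'>1$ (otherwise the remaining quadratic is not concave), leaves the small-$\beta'$ regime as an unproven sketch, and, worse, makes the bound degenerate into $\frac{\beta'\|\tilde z_k-z_s\|_V^2}{2(K\beta'-1)}$, so that you must prove a displacement bound $\|\tilde z_k-z_s\|_V^2\le C\,\dist_V(z_s,\mathcal Z^*)^2$ with $C\le 3/2$. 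That bound is not available: nonexpansiveness only confines the iterates, and hence their average, to the ball of radius $\dist_V(z_s,\mathcal Z^*)$ around $z^*=P_{\mathcal Z^*}(z_s)$, and since $z_s$ sits on the boundary of that ball the generic constant is $C=4$ and cannot be improved by these tools (the average may in principle drift to the far side of $z^*$). You correctly flagged this as the crux, but it is a crux of your route only; since it is not provable with what is at hand, the proof does not close.

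The idea you are missing is that the term you dropped is precisely what saves the argument. Keeping both terms, $\frac{1}{2K}\|z'-z_s\|_V^2-\frac{1}{2K}\|z'-z_k\|_V^2$ is an \emph{affine} function of $z'$: the quadratics cancel and it equals $\frac{1}{2K}\langle z_k-z_s,\,2z'-z_k-z_s\rangle_V$. Hence the supremum of this affine function minus $\frac{\beta'}{2}\|z'-\tilde z_k\|_V^2$ is finite for \emph{every} $\beta'>0$, with no case distinction, and is attained at $z'=\tilde z_k+\frac{1}{\beta'K}(z_k-z_s)$. Substituting the maximizer gives
\begin{align*}
\frac{1}{2K}\bigl(\|\tilde z_k-z_s\|_V^2-\|\tilde z_k-z_k\|_V^2\bigr)+\frac{1}{2\beta' K^2}\|z_k-z_s\|_V^2
\end{align*}
plus the $a_2^+$ terms, and from there the crude estimates you already have --- Young's inequality $\|\tilde z_k-z_s\|_V^2\le 2\|\tilde z_k-z^*\|_V^2+2\|z_s-z^*\|_V^2$ (similarly for $\|z_k-z_s\|_V^2$) together with $\|z_k-z^*\|_V\le\|z_s-z^*\|_V$ and $\|\tilde z_k-z^*\|_V\le\|z_s-z^*\|_V$ --- yield exactly $\frac{1}{K}\bigl(2+a_2^++\frac{2}{\beta'K}\bigr)\dist_V(z_s,\mathcal Z^*)^2$. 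The quadratic error bound of the self-centered smoothed gap (Proposition~\ref{prop:qebscsm}, applicable since $\beta_s\le\min(\beta/2,\eta/4)$) then converts the squared distance into $G_{\beta_s}(z_s,z_s)$, which is the claimed inequality. In short, no sharp displacement estimate is needed: the exact maximization absorbs the looseness that your completed square cannot, and it simultaneously handles all $\beta'>0$.
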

\begin{proof}
	\revisionone{As in Proposition~\ref{prop:conv_rapdhg}, we have $\forall z$,
	\begin{align*}
L(\tilde x_{k}, y) - L(x, \tilde y_{k}) \leq 
\frac 1{2(k-s)} \|z - z_s\|^2_V -\frac 1{2(k-s)} \|z - z_{k}\|^2_V + \frac {a_2^+}{2(k-s)} \|z^* - z_s\|^2_V -\frac {a_2^+}{2(k-s)} \|z^* - z_{k}\|^2_V
\end{align*}	
}
Summing \eqref{ineq_on_lagrangians} for $l$ between $s$ and $k-1$ and using the fact that the Lagrangian is convex-concave, we get for all $z$,
We go on with
\begin{align*}
&L(\tilde x_{k}, y) - L(x, \tilde y_{k}) -\frac{\beta'}{2}  \|z - \tilde z_{k}\|^2_V \leq 
\frac 1{2(k-s)} \|z - z_s\|^2_V -\frac 1{2(k-s)} \|z - z_{k}\|^2_V -\frac{\beta'}{2}  \|z - \tilde z_{k}\|^2_V \\
& \hspace{20em}\revisionone{+ \frac {a_2^+}{2(k-s)} \|z^* - z_s\|^2_V -\frac {a_2^+}{2(k-s)} \|z^* - z_{k}\|^2_V}\\
& G_{\beta'}(\tilde z_k, \tilde z_k) \leq \sup_z \frac 1{2(k-s)} \|z - z_s\|^2_V -\frac 1{2(k-s)} \|z - z_{k}\|^2_V -\frac{\beta'}{2}  \|z - \tilde z_{k}\|^2_V\\
& \hspace{20em}\revisionone{+ \frac {a_2^+}{2(k-s)} \|z^* - z_s\|^2_V -\frac {a_2^+}{2(k-s)} \|z^* - z_{k}\|^2_V}
\end{align*}
This supremum is attained at $z = \tilde z_k + \frac{1}{\beta'(k-s)}(z_k - z_s)$ so that, denoting $z^* = P_{\mathcal Z^*}(z_s)$, 
\begin{align*}
G_{\beta'}(\tilde z_k, \tilde z_k) &\leq \frac{1}{2(k-s)} \big\langle z_k - z_s, 2\tilde z_k + \frac{1}{\beta'(k-s)}(z_k - z_s) - z_k - z_s\big\rangle_V - \frac{1}{2\beta'(k-s)^2}\|z_k - z_s\|^2_V \\
& \hspace{20em}\revisionone{+ \frac {a_2^+}{2(k-s)} \|z^* - z_s\|^2_V -\frac {a_2^+}{2(k-s)} \|z^* - z_{k}\|^2_V}\\
& \leq \frac{1}{2(k-s)}\|\tilde z_k - z_s\|^2_V - \frac{1}{2(k-s)}\|\tilde z_k - z_k\|^2_V + \frac{1}{2\beta'(k-s)^2}\|z_k - z_s\|^2_V \\
& \hspace{20em}\revisionone{+ \frac {a_2^+}{2(k-s)} \|z^* - z_s\|^2_V -\frac {a_2^+}{2(k-s)} \|z^* - z_{k}\|^2_V}\\
& \leq \frac{1}{k-s}\|\tilde z_k - z^*\|^2_V +  \frac{1}{k-s}\|z_s - z^*\|^2_V - 0 + \frac{1}{\beta'(k-s)^2} \|z_k - z^*\|^2_V + \frac{1}{\beta'(k-s)^2} \|z_s - z^*\|^2_V \\
& \hspace{20em}\revisionone{+ \frac {a_2^+}{2(k-s)} \|z^* - z_s\|^2_V -\frac {a_2^+}{2(k-s)} \|z^* - z_{k}\|^2_V}\\
& \leq \frac{1}{k-s}\|\tilde z_k - z^*\|^2_V +  \Big(\frac{1}{k-s}+ \frac{1}{\beta'(k-s)^2}+\frac{a_2^+}{2(k-s)}\Big)\|z_s - z^*\|^2_V  + \Big(\frac{1}{\beta'(k-s)^2} - \frac{a_2^+}{2(k-s)}\Big) \|z_k - z^*\|^2_V \\
& \leq \frac{1}{k-s}\Big( 2 + \revisionone{a_2^+} + \frac{2}{\beta'(k-s)}\Big) \dist_V(z_s, \mathcal Z^*)^2
\end{align*}
because Lemma \ref{lem:averaged_operator} implies that $\|z_k - z^*\| \leq \|z_s - z^*\|$ for all $k \geq s$, and thus also $\|\tilde z_k - z^*\| \leq \|z_s - z^*\|$.
We now use the quadratic error bound of the self-centered smoothed gap, which holds thanks to Proposition~\ref{prop:qebscsm}.
\begin{align*}
G_{\beta'}(\tilde z_k, \tilde z_k) &\leq \frac{2}{\eta'(\beta_s)(k-s)}\Big( 2+a_2^+ + \frac{2}{\beta'(k-s)}\Big) G_{\beta_s}(z_s, z_s) \;. \qedhere
\end{align*}
\end{proof}
Hence, choosing $\beta' = \frac{1}{k-s}$, as soon as $k-s \geq \frac{4(4+a_2^+)}{\eta'(\beta_s)}$, we have $G_{\beta'}(\tilde z_k, \tilde z_k) \leq 0.5 G_{\beta_s}(z_s, z_s)$. We have added additional safeguards -- $\beta' = \min(\frac{1}{k-s+1}, \beta_{s}/2)$ and $G_{\beta_s}(z_s,z_s) \leq 0.01 \min( G_{\beta'}(\tilde z_{k+1}, \tilde z_{k+1}),  G_{\beta'}(\bar z_{k+1}, \bar z_{k+1}))$ -- for cases where a precipitous restart may lead to $\beta' > \min(\beta/2, \eta/4)$ and thus slow down the algorithm afterwards because we have lost control on $\eta(\beta')$.

\begin{algorithm}[htb]
	\begin{algorithmic}
		\STATE $s=0$, $\beta_{0} > 0$
		\FOR{$k \in \mathbb N$}
		\STATE $z_{k+1} = T(z_k)$  \qquad -- {\em see} \eqref{eq:defT}
		\STATE $\tilde z_{k+1} = \frac{1}{k-s+1}\sum_{l=s+1}^{k+1} \bar z_l$
		\STATE $\beta' = \min(\frac{1}{k-s+1}, 2\beta_{s})$
		\STATE $G^{\mathrm curr} = \min( G_{\beta'}(\tilde z_{k+1}, \tilde z_{k+1}),  G_{\beta'}(\bar z_{k+1}, \bar z_{k+1}))$
		\IF{$G^{\mathrm curr} \leq 0.5 \; G_{\beta_s}(z_{s}, z_s)$  {\bf or} $G_{\beta_s}(z_s,z_s) \leq 0.01 \; G^{\mathrm curr}$ }
		\IF{$G_{\beta'}(\tilde z_{k+1}, \tilde z_{k+1}) \leq G_{\beta'}(\bar z_{k+1}, \bar z_{k+1})$}
		\STATE Reassign $z_{k+1} \leftarrow \tilde z_{k+1}$
		\ELSE
		\STATE Keep current iterate
		\ENDIF
		\STATE $z_s = z_{k+1}$
		\STATE $\beta_s = \beta'$
		\STATE $s = k$
		\ENDIF
		\ENDFOR
	\end{algorithmic}
	\caption{RAPDHG with adaptive restart}
	\label{alg:adaptive}
\end{algorithm}

\section{Numerical experiments}
\label{sec:experiments}

In the last section, we present some numerical experiments to illustrate the linear convergence behaviour of PDHG and RAPDHG\footnote{The code is available on 
\url{https://perso.telecom-paristech.fr/ofercoq/Software.html}}. We will first look at a two linear program to show that the linear rate of RAPDHG can be much faster than PDHG's. Then, we will exemplify the limits of the methods with a ridge regression problem where restarted averaging does not help and a non-polyhedral problem where we do not observe a linear rate of convergence.

\subsection{Small linear program}

The first experiment is on a small LP where the dual optimal set is known:
\begin{align*}
\min_{x \in \mathbb R^4, x\geq 0} -7x_1 -9x_2 - 18 x_3 - 17 x_4\\
2 x_1 + 4 x_2 + 6 x_3 + 7 x_4 \leq 41 \\
x_1 + x_2 + 2 x_3 + 2 x_4 \leq 17 \\
x_1 + 2 x_2 + 3 x_3 + 3 x_4 \leq 24 
\end{align*}

To give an estimate the quadratic error bound constant, we compute for several values of $\beta$ the quantity $\hat\eta(\beta) = \min_k \frac{G_{\beta}(z_k; z^*)}{0.5 \dist(z_k, \mathcal Z^*)^2}$. We can do it because $\mathcal Z^*$ is known for this small problem. Using a similar idea we can also get an estimate of the metric subregularity constant of the Lagrangian's gradient, here $\eta \approx 0.0187$.

On Figure~\ref{tissu_rapdhg}, we can see that the actual rate of convergence is rather close to what is predicted by theory. Moreover, RAPDHG is much faster than PDHG. Yet, note that thousands of iterations for a LP with 4 variables and 3 constraints is not competitive with the state of the art.

\begin{figure}
\begin{minipage}{0.3\linewidth}
\begin{tabular}{|l|l|}
\hline 
$\beta$ & $\hat \eta(\beta)$ \\
\hline
1 & 0.00018 \\
0.1 &  0.00183 \\
0.01 & 0.01829 \\
0.001 & 0.14474 \\
\hline
\end{tabular}
\end{minipage}
\begin{minipage}{0.6\linewidth}
\includegraphics[width=\linewidth]{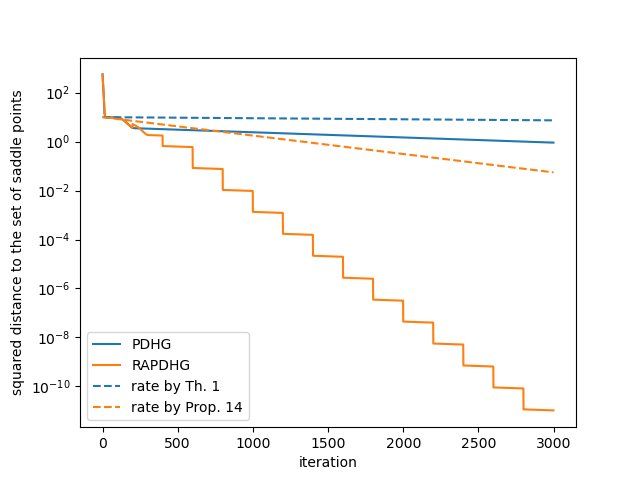}
\end{minipage}
\caption{Table: Estimates of the quadratic error bound of the smoothed gap for several smoothing parameters. Figure: Comparison of PDHG and RAPDHG on the small linear program. The restart period of 200 was chosen because for $\beta = 1/100$, we have $\hat \eta(\beta) \approx 2/100$, so that $K = \lceil \max(2/\beta, 4/\eta) \rceil = 200$.}
\label{tissu_rapdhg}
\end{figure}

\subsection{Larger polyhedral problem}

We then run an experiment on a more realistic problem.
We run PDHG and RAPDHG with adaptive restart on the following sparse SVM problem:
\begin{align*}
\min_{w \in \mathbb R^d} \sum_{i=1}^n \max(0, 1-y_i x_{i,:} w) + \|w\|_1
\end{align*}
where $(y_i,x_{i,:})_{1 \leq i \leq n}$ are the data points from the a1a dataset \cite{chang2011libsvm} ($d=119$ and $n=1,605$). We normalized the data matrix so that $\|x_{:,j}\|_2 = 1$.


The convergence profile is given in Figure~\ref{fig:a1a}. The behaviour of the algorithms is similar to what was seen in the small size problem.
Here however, we can see clearly two phases. In the beginning, we observe a sublinear convergence, where restart and averaging does not help. Then the linear rate kicks in after a nonnegligible time. We believe that it comes from something related to the condition $G_\beta(z; z^*) \leq R$ in Proposition~\ref{prop:rate_qebsm}.
Note that this cold start phase is quite long. On our laptop computer with 4 Intel(R) Core(TM) i5-7200U CPU @ 2.50GHz it took 5.7s while the adaptive proximal point method of \cite{lu2020adaptive} took 0.93s to solve the problem.

\begin{figure}[htb]
	\centering
\includegraphics[width=0.6\linewidth]{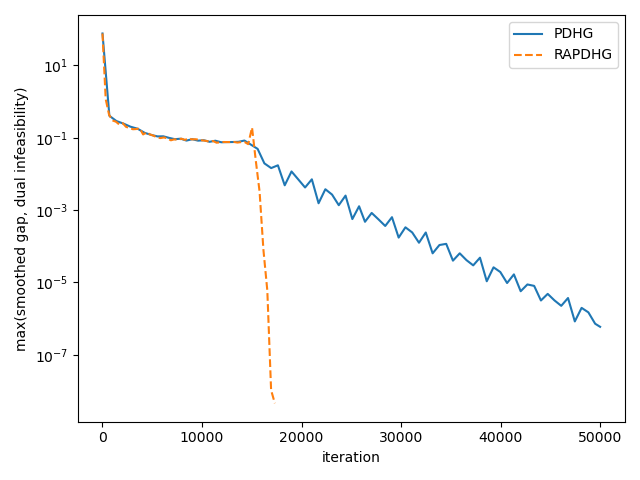}
\caption{Comparison of PDHG and RAPDHG: sparse SVM on the a1a dataset. We are plotting the optimality measure for the last iterate}
\label{fig:a1a}
\end{figure}

\subsection{Ridge regression}

In this experiment, we test on a problem where restarting does not help.
We consider least squares with $\ell_2$ regularization
\[
\min_x \frac 12 \|A x - b\|^2 + 50 \|x\|^2
\]
where $A$ and $b$ are given by the real-sim dataset \cite{chang2011libsvm}. Since we know the strong convexity-concavity parameter of the Lagrangian, we choose the step sizes $\sigma$ and $\tau$ as in Section~\ref{sec:strconv_coarse}. As a consequence, PDHG has a convergence rate that matches the theoretical lower bound for this class problem and cannot be improved.

We can see on Figure~\ref{fig:leastsquares} that, as expected, restart and averaging does not help: $\bar z_k$ is consistently better than $\tilde z_k$ so that RAPDHG with adaptive restart selects the same sequence as PDHG and the the two curves match. We added a comparison with restarted-FISTA~\cite{fercoq2019adaptive} to show that the choice of step sizes indeed suffices to get an algorithm with accelerated rate.

\begin{figure}[htbp]
\centering
\includegraphics[width=0.6\linewidth]{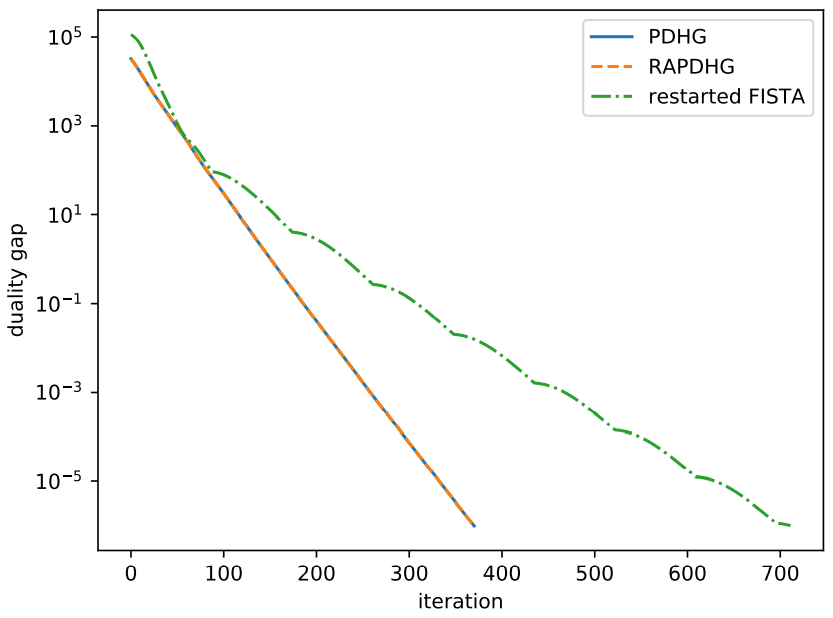}
\caption{Solving $\ell_2$ regularized least squares on the real-sim dataset.}
\label{fig:leastsquares}
\end{figure}

\subsection{TV-L1}

We consider the minimization of the following non-polyhedral function
\begin{align*}
\min_x \lambda \|x - I\|_1 + \|Dx\|_{2,1}
\end{align*}
where $I$ is the Cameraman image, $D$ is the 2D discrete gradient, $\|z\|_{2,1} = \sum_{p \in P} \sqrt{z_{p,1}^2 + z_{p,2}^2}$ and $\lambda = 1.9$.
This problem is not piecewise linear-quadratic, so that our linear convergence result does not hold. Yet is rather structured: it is equivalent to a second order cone program. We can see in Figure \ref{fig:L1_rof} that this is a difficult problem for PDHG but that RAPDHG does improve the convergence speed significantly. The solution we obtain is shown in Figure~\ref{fig:solution}.

\begin{figure}
\centering
\includegraphics[width=0.6\linewidth]{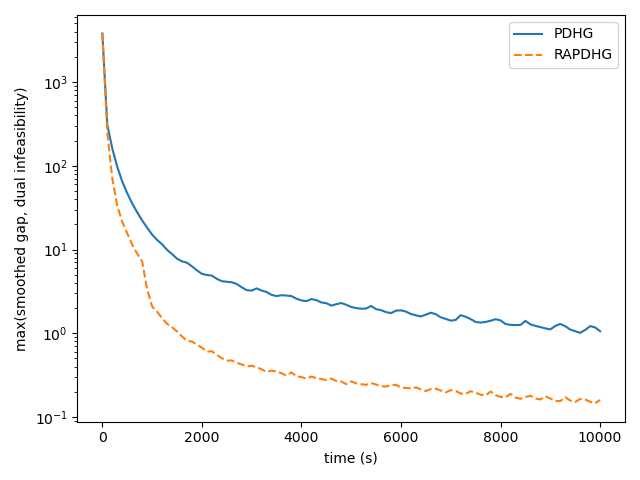}
\caption{Comparison of PDHG and RAPDHG on the $\ell_1$ ROF problem.}
\label{fig:L1_rof}
\end{figure}
\begin{figure}
\centering
\includegraphics[width=0.4\linewidth]{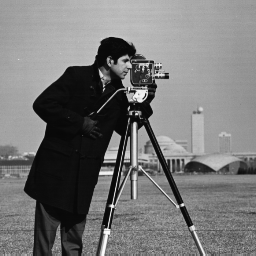} \quad \includegraphics[width=0.4\linewidth]{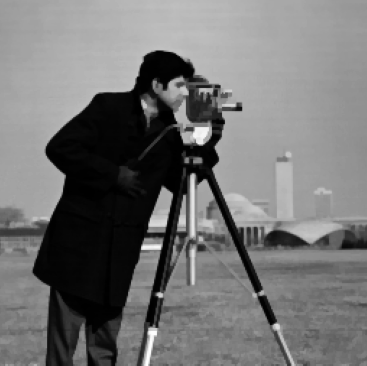}
\caption{Left:original image -- Right: solution, 59\% of the pixels are unchanged}
\label{fig:solution}
\end{figure}

\section{Conclusion}

In this paper, we have tried to understand the linear rate of convergence of primal-dual hybrid gradient. Even on a very simple problem, we have seen that current regularity assumptions are not sufficient to explain the behavior of the algorithm. 
We have then introduced the quadratic error bound of the smoothed gap and argue that this new condition is more widely applicable and more precise than previous ones.
Finally, we showed how this new knowledge can be used to improve the algorithm.

This work opens several perspectives:
\begin{itemize}
\item Can the quadratic error bound of the smooth gap be used to understand better the convergence rate of other primal-dual algorithms? Interesting cases would be the ADMM, the augmented Lagrangian method and coordinate update methods to cite a few.
\item We have seen in \eqref{eq:approximate_gap} that the smoothed gap at a non-optimal point can approximate the smoothed gap at an optimal point. Considering it as a stopping criterion would be an alternative to the KKT error, which implicitly requires metric sub-regularity to make sense, and duality gap, which is $+\infty$ nearly everywhere for linearly constrained problems.
\item Our first attempt for the design of a primal-dual algorithm with an improved linear rate of convergence has shown the usefulness of our regularity assumption. Would we be able to design an optimal algorithm for the class of problems with a given quadratic error bound of the smoothed gap function?

\end{itemize}

\appendix

\section{Proofs of Section~\ref{sec:basic}}

\noindent{$\triangleright$ \bf Lemma \ref{lem:prox}\;}
{\it Let $p = \prox_{\tau f}(x)$ and  $p' = \prox_{\tau f}(x')$ where $f$ is \ora{$\mu_f$-strongly} convex. For all $x$ and $x'$,
	\[
	f(p) + \frac{1}{2\tau} \|p - x\|^2 \leq f(x') + \frac{1}{2\tau} \|x' - x\|^2 - \frac {1 \ora{+\tau \mu_f}}{2\tau}\|p - x'\|^2 
	\]
	\[
	\ora{(1+ 2\tau\mu_f)} \|p - p'\|^2 \leq \|x' - x\|^2 - \|p - x - p'+x'\|^2 
	\]}
\begin{proof}
$p = \arg\min_z f(z) + \frac{1}{2\tau} \|z - x\|^2$

Yet, $h:z \mapsto f(z) + \frac{1}{2\tau} \|z - x\|^2 - \frac {1 \ora{+\tau \mu_f}}{2\tau}\|p - z\|^2$ is convex and $0 \in \partial h(p)$. This implies the first inequality by Fermat's rule.

We now apply the first inequality at $(x, p')$ and at $(x', p)$ and then sum.
\begin{multline*}
f(p) + \frac{1}{2\tau} \|p - x\|^2 + f(p') + \frac{1}{2\tau} \|p' - x'\|^2\leq f(p') + \frac{1}{2\tau} \|p' - x\|^2 - \frac {1 \ora{+\tau \mu_f}}{2\tau}\|p - p'\|^2 + f(p) \\ + \frac{1}{2\tau} \|p - x'\|^2 - \frac {1 \ora{+\tau \mu_f}}{2\tau}\|p' - p\|^2
\end{multline*}
Rearranging the squared norm terms we get
\[
\ora{(1+\tau \mu_f)}\|p' - p\|^2 \leq \langle p - p', x - x'\rangle
\]
\[
\|p - x - p'+x'\|^2 = \|p-p'\|^2 + \|x - x'\|^2 - 2 \langle p - p', x-x'\rangle \leq \|x - x'\|^2 - \ora{(1 + 2\tau \mu_f)}\|p - p'\|^2
\]
\end{proof}

\noindent{$\triangleright$ \bf Lemma \ref{lem:averaged_operator}\;}
{\it	Let $T: \mathcal X \times \mathcal Y \to \mathcal X \times \mathcal Y$ be defined for any $(x,y)$ \revisionone{by \eqref{eq:defT}}.
	\bl{Suppose that $\nabla f_2$ is $L_f$-Lipschitz continuous and $\nabla g_2^*$ is $L_{g^*}$-Lipschitz continuous.}
	If the step sizes satisfy $\gamma = \sigma \tau \|A\|^2 < 1$, \bl{$\tau L_f/2 \leq \alpha_f < 1$, $\alpha_g = \sigma L_{g^*}/2 \leq 1$ and $\sigma L_{g^*}/2 \leq \alpha_f (1 -  \sigma \tau \|A\|^2)$} then
	$T$ is  nonexpansive in the norm $\|\cdot\|_V$, 
	\revisionone{
		\begin{align}
		\|T(z) - T(z')\|^2_V \ora{+2\mu_f  \|\bar x - \bar x'\|^2 + 2\mu_{g^*} \|\bar y - \bar y'\|^2}& \leq 
		\|z - z'\|_V^2 - 2\tilde V(z, z')
		\end{align}}
	and $T$ is $\frac{1}{1+ \lambda}$-averaged where 
	\begin{align*}
	\lambda& = \bl{1 - \alpha_f - \frac{\alpha_g - (1-\gamma)\alpha_f}{2} - \sqrt{(1-\alpha_f)^2\gamma + ((1-\gamma)\alpha_f -\alpha_g)^2/4}} \geq (1-\sqrt \gamma)\bl{( 1-\alpha_f)}\;,
	\end{align*}
	which means for $z=(x,y)$ and $z'=(x',y')$
	\begin{equation}
	\|T(z) - T(z')\|^2_V \ora{+2\mu_f  \|\bar x - \bar x'\|^2 + 2\mu_{g^*} \|\bar y - \bar y'\|^2}
	\leq 
	\|z - z'\|_V^2 - \lambda \|z - T(z) - z' + T(z')\|^2_V \;.
	\end{equation}
	As a consequence, $(z_k)$ converges to a saddle point of the Lagrangian.
}
\begin{proof}
\revisionone{In the appendix, we will improve slightly the result in the case where $f$ or $g^*$ is strongly convex. Note that all what follows works even if $\mu_f = \mu_{g^*} = 0$. }

	Since the proximal operator of a convex function is firmly nonexpansive, for $(x,y)$, $(x',y') \in \mathcal Z$,
	\begin{align*}
	\ora{(1+2\mu_f\tau)}	\|\bar x -\bar x'\|^2& \leq \|x \bl{-\tau \nabla f_2(x)}- \tau A^\top y - x' \bl{+\tau \nabla f_2(x')} + \tau A^\top y'\|^2 \\
	& \qquad \qquad- \|x  \bl{-\tau \nabla f_2(x)}- \tau A^\top y - \bar x - x' \bl{+\tau \nabla f_2(x')} + \tau A^\top y' + \bar x'\|^2 \\
	& = \|x \bl{-\tau \nabla f_2(x)} - x'  \bl{+\tau \nabla f_2(x')}\|^2 + \tau^2 \|A^\top(y - y')\|^2 \\
	& \qquad \qquad- 2\tau\langle x  \bl{-\tau \nabla f_2(x)}- x' \bl{+\tau \nabla f_2(x')}, A^\top (y - y')\rangle \\
	& \qquad \qquad- \|x  \bl{-\tau \nabla f_2(x)}- \bar x - x' \bl{+\tau \nabla f_2(x')}+ \bar x'\|^2 -  \tau^2 \|A^\top(y - y')\|^2 \\
	& \qquad \qquad + 2\tau\langle x  \bl{-\tau \nabla f_2(x)}-\bar x - x' \bl{+\tau \nabla f_2(x')}+\bar x', A^\top (y - y')\rangle \\
	& = \|x  \bl{-\tau \nabla f_2(x)}- x' \bl{+\tau \nabla f_2(x')}\|^2 - \|x \bl{-\tau \nabla f_2(x)} - \bar x - x' \bl{+\tau \nabla f_2(x')}+ \bar x'\|^2 \\
& \qquad \qquad	- 2\tau\langle\bar x - \bar x', A^\top (y - y')\rangle
	\end{align*}
	\bl{We also have 
		\begin{align*}
		\|x  \bl{-\tau \nabla f_2(x)}- x' \bl{+\tau \nabla f_2(x')}\|^2& = \|x  - x' \|^2 
		+ \tau^2 \|\nabla f_2(x) - \nabla f_2(x') \|^2 - 2\tau \langle \nabla f_2(x) - \nabla f_2(x') , x -x'\rangle \\
		& \leq  \|x  - x' \|^2  - \big(\frac{2\tau}{L_f} - \tau^2\big)\|\nabla f_2(x) - \nabla f_2(x') \|^2
		\end{align*}
		\begin{align*}
		\|x -\tau \nabla f_2(x)&- \bar x- x' +\tau \nabla f_2(x')+ \bar x'\|^2  = \|x - \bar x - x' + \bar x'\|^2 
		+ \tau^2 \|\nabla f_2(x) - \nabla f_2(x') \|^2 \\
		&\qquad \qquad - 2\tau \langle \nabla f_2(x) - \nabla f_2(x') , x -x'- \bar x +\bar x'\rangle \\
		&\geq (1 - \alpha_f) \|x - \bar x - x'- \bar x' \|^2 +\tau^2 (1 - \alpha_f^{-1})\|\nabla f_2(x) - \nabla f_2(x') \|^2
		\end{align*}
		for all $\alpha_f>0$. Hence, 
		\begin{align*}
		\ora{(1+2\mu_f\tau)}	\|\bar x -\bar x'\|^2 \leq  \|x  - x' \|^2& - (1-\alpha_f)\|x - \bar x - x'+ \bar x'\|^2 - 2\tau\langle\bar x - \bar x', A^\top (y - y')\rangle \\
		& - \big(\frac{2\tau}{L_f} - \alpha_f^{-1}\tau^2\big)\|\nabla f_2(x) - \nabla f_2(x') \|^2
		\end{align*}
	}
	
	Similarly,
	\begin{align*}
	\ora{(1+2\mu_{g^*}\sigma)}	\|\bar y -\bar y'\|^2 \leq \|y - y'\|^2& - \bl{(1-\alpha_g)} \|y - \bar y - y' + \bar y'\|^2 + 2 \sigma \langle \bar y - \bar y', A (\bar x - \bar x')\rangle
	\\
	& \bl{- \big(\frac{2\sigma}{L_{g^*}} - \alpha_g^{-1}\sigma^2\big)\|\nabla g_2(y) - \nabla g_2(y') \|^2}
	\end{align*}
	We then proceed to
	\begin{align*}
	\|T(x,y) - T(x',y')\|^2_V& = \frac{1}{\tau} \|\bar x - \tau A^\top (\bar y - y) - \bar x' + \tau A^\top (\bar y' - y')\|^2 + \frac{1}{\sigma} \|\bar y - \bar y'\|^2 \\
	& =  \frac{1}{\tau} \|\bar x - \bar x'\|^2 +  \tau \|A^\top (\bar y - y)- A^\top (\bar y' - y')\|^2 \\
	& \qquad  - 2 \langle \bar x -\bar x', A^\top (\bar y - y)- A^\top (\bar y' - y')\rangle + \frac{1}{\sigma} \|\bar y - \bar y'\|^2 \\
	& \leq  \frac{1}{\tau}\|x - x'\|^2 -  \frac{1 \bl{-\alpha_f}}{\tau}\|x - \bar x - x' + \bar x'\|^2 - 2\langle\bar x - \bar x', A^\top (y - y')\rangle \\
	& \qquad  + \tau \| A^\top (\bar y - y -\bar y' + y')\|^2   - 2 \langle \bar x -\bar x', A^\top (\bar y - y)- A^\top (\bar y' - y')\rangle \\
	& \qquad  +  \frac{1}{\sigma} \|y - y'\|^2 -  \frac{1 \bl{-\alpha_g}}{\sigma} \|y  - \bar y - y' + \bar y'\|^2 + 2 \langle \bar y - \bar y', A (\bar x - \bar x')\rangle \\
	& \qquad   \bl{- \big(\frac{2\tau}{L_f} - \alpha_f^{-1} \tau^2\big)\|\nabla f_2(x) - \nabla f_2(x') \|^2} \ora{-2\mu_f  \|\bar x - \bar x'\|^2} \\
	& \qquad  \bl{- \big(\frac{2\sigma}{L_{g^*}} - \alpha_g^{-1}\sigma^2\big)\|\nabla g_2(y) - \nabla g_2(y') \|^2} \ora{ - 2\mu_{g^*} \|\bar y - \bar y'\|^2}
	\end{align*}
		\bl{We choose $\alpha_f = \tau L_f/2 < 1$ and $\alpha_g = \sigma L_{g^*}/2 < 1$} \revisionone{and we note that $- 2\langle\bar x - \bar x', A^\top (y - y')\rangle- 2 \langle \bar x -\bar x', A^\top (\bar y - y)- A^\top (\bar y' - y')\rangle + 2 \langle \bar y - \bar y', A (\bar x - \bar x')\rangle=0$. This leads to
	\begin{align*}		
\|T(x,y) - T(x',y')\|^2_V&  \leq \|z - z'\|^2_V -  \frac{1 -\bl{\alpha_f}}{\tau}\|x - \bar x - x' + \bar x'\|^2 - \frac{1 - \bl{\alpha_g} - \tau\sigma\|A\|^2}{\sigma}\|y -\bar y -y' + \bar y'\|^2\\
&\qquad \ora{-2\mu_f  \|\bar x - \bar x'\|^2} \ora{ - 2\mu_{g^*} \|\bar y - \bar y'\|^2}
	\end{align*}
	which proves \eqref{eq:nonexpansive_with_vtilde}. Now, we shall prove that $V(z, z') \geq \frac{\lambda}{2} \|z - T(z) - z' + T(z')\|^2_V
$. For any  $\lambda \in [0,1\bl{-\alpha_f}]$ and $\alpha > 0$,
}
	\begin{align*}
	\|T(x,y) - T(x',y')\|^2_V	& \leq \frac{1}{\tau}\|x - x'\|^2  -  \frac{1\bl{-\alpha_f}-\lambda}{\tau}\|x - \bar x - x'+ \bar x'\|^2 \\
	&\qquad  - \frac{\lambda}{\tau}\|x - \bar x + \tau A^\top (\bar y- y)- x'+ \bar x' - \tau A^\top (\bar y' - y')\|^2  \\
	& \qquad  + \lambda\tau \| A^\top (\bar y - y -\bar y' + y')\|^2 \\
	&\qquad  + 2\lambda \langle x -  \bar x - x' +\bar x', A^\top (\bar y - y)- A^\top (\bar y' - y')\rangle \\
	& \qquad  +  \frac{1}{\sigma} \|y - y'\|^2 -  \frac{1 \bl{-\alpha_g} - \sigma\tau\|A\|^2}{\sigma} \|y - \bar y - y'+ \bar y'\|^2 \\
	&\qquad \ora{-2\mu_f  \|\bar x - \bar x'\|^2} \ora{- 2\mu_{g^*} \|\bar y - \bar y'\|^2}
	\end{align*}
	\begin{align*}
	\|T(x,y) - T(x',y')\|^2_V	&\leq 
	\frac{1}{\tau}\|x - x'\|^2 +  \frac{1}{\sigma} \|y - y'\|^2 \\
	&\qquad - \frac{\lambda}{\tau}\|x - \bar x + \tau A^\top (\bar y- y)- x'+ \bar x' - \tau A^\top (\bar y' - y')\|^2 \\
	& \qquad 
	-  \frac{\lambda}{\sigma} \|y - \bar y - y'+ \bar y'\|^2
	+(\frac{\lambda}{\tau \alpha} -\frac{1\bl{-\alpha_f}-\lambda}{\tau})\|x - \bar x - x'+ \bar x'\|^2 \\
	& \qquad  + \Big((1+\lambda+\lambda \alpha)\tau \|A\|^2 - \frac{1\bl{-\alpha_g}-\lambda}{\sigma}\Big)\|(\bar y - y -\bar y' + y')\|^2  \\
	& \qquad   \ora{-2\mu_f  \|\bar x - \bar x'\|^2} \ora{- 2\mu_{g^*} \|\bar y - \bar y'\|^2} 
	\end{align*}
	where $\lambda \in [0,1\bl{-\alpha_f}]$ and $\alpha > 0$ are arbitrary.
	We choose $\lambda$ and $\alpha$ such that
	\begin{align*}
	&\frac{\lambda}{\alpha} = 1\bl{-\alpha_f}-\lambda \\
	&(1+\lambda+\lambda \alpha)\gamma = 1\bl{-\alpha_g}-\lambda
	\end{align*}
	that is $\lambda = 1 -\sqrt \gamma$ and $\alpha = \frac{\lambda}{1-\lambda} = \frac{1 -\sqrt \gamma}{\sqrt \gamma}$ when $f_2=0$ and $g_2=0$.
	\bl{In the case $f_2$ and $g_2$ non zero, we take 
		\[
		\lambda = 1 - \alpha_f - \frac{\alpha_g - (1-\gamma)\alpha_f}{2} - \sqrt{(1-\alpha_f)^2\gamma + ((1-\gamma)\alpha_f -\alpha_g)^2/4}\;,\qquad \alpha = \frac{\lambda}{1-\alpha_f -\lambda}\;.
		\]
		Note that as soon as $\alpha_g \leq (1-\gamma) \alpha_f$, we have $(1-\alpha_f)(1-\sqrt \gamma) \leq \lambda \leq 1 - \alpha_f$.
	}
	We continue as
	\begin{multline*}
	\|T(x,y) - T(x',y')\|^2_V \leq 
	\frac{1}{\tau}\|x - x'\|^2 +  \frac{1}{\sigma} \|y - y'\|^2 - \frac{\lambda}{\tau}\|x - \bar x + \tau A^\top (\bar y- y)- x'+ \bar x' - \tau A^\top (\bar y' - y')\|^2 \\
		-  \frac{\lambda}{\sigma} \|y - \bar y - y'+ \bar y'\|^2  \ora{-2\mu_f  \|\bar x - \bar x'\|^2 - 2\mu_{g^*} \|\bar y - \bar y'\|^2} \;.
	\end{multline*}
	We get that $T$ is $\beta$-averaged with $\frac{1-\beta}{\beta} = \lambda$, that is $\beta = \frac{1}{\lambda + 1}$.
	
		For the convergence, we use Krasnosels'kii Mann theorem \cite{bauschke2011convex}.
\end{proof}

\noindent{$\triangleright$ \bf Lemma \ref{lem:tildev}\;}
{\it  \revisionone{For any $z^* \in \mathcal Z^*$,}	$\tilde V$ satisfies
	\begin{equation*}
	\revisionone{\tilde V(z_k, z^*)} = \frac{1-\revisionone{\alpha_f}}{2\tau}  \|\bar x_{k+1} - x_k\|^2 + (\frac{1- \revisionone{\alpha_g}}{2\sigma} - \frac{\tau \|A\|^2}{2}) \|\bar y_{k+1} - y_k\|^2 \geq \frac{\lambda}{2} \|z_{k+1} - z_k\|^2_V\;.
	\end{equation*}
	
}
\begin{proof}
	\revisionone{
		The last part of the proof of Lemma~\ref{lem:averaged_operator} shows that for any $z, z' \in \mathcal Z$,
		\begin{equation*}
		V(z, z') \geq \frac{\lambda}{2} \|z - T(z) - z' + T(z')\|^2_V
		\end{equation*}
		Since $T(z^*) = z^*, T(z_k) = z_{k+1}$, we get the desired result.
	}
\end{proof}

\noindent{$\triangleright$ \bf Lemma \ref{lem:lag_ineq}\;}	
{\it Suppose that $\gamma = \sigma \tau \|A\|^2 < 1$, \bl{$\tau L_f/2 \leq \alpha_f < 1$, $\alpha_g = \sigma L_{g^*}/2 \leq 1$ and $\sigma L_{g^*}/2 \leq \alpha_f (1 -  \sigma \tau \|A\|^2)$}. For all $k \in \mathbb N$ and for all $z \in \mathcal Z$, 
	\begin{align*}
L(\bar x_{k+1}, y) - L(x, \bar y_{k+1})& \ora{+ \frac 12 \|\bar z_{k+1} - z\|^2_{\mu}}\leq 
\frac 12 \|z - z_k\|^2_{V\ora{-\mu_2}} -\frac 12 \|z - z_{k+1}\|^2_V\\
& +\bl{a_2} \tilde V(z_k, z^*)
\end{align*}
	where $	\tilde V(z_k, z^*) = (\frac{1}{2\tau} -\frac{L_f}{2}) \|\bar x_{k+1} - x_k\|^2 + (\frac{1}{2\sigma} - \frac{\tau \|A\|^2}{2} - \frac{L_{g^*}}{2}) \|\bar y_{k+1} - y_k\|^2 $ and \bl{$a_2 = \max(\frac{2 \alpha_f - 1}{1-\alpha_f}, \frac{2\alpha_g-1+\gamma}{1-\alpha_g - \gamma})$. $a_2 \geq -1$ may be positive or negative}.
}
\begin{proof}
	\bl{By Taylor-Lagrange inequality and convexity of $f_2$ and $g_2^*$,
		\begin{align*}
		&f_2(\bar x_{k+1}) \leq f_2(x_k) + \langle \nabla f_2(x_k) , \bar x_{k+1} - x_k\rangle + \frac{L_f}{2} \|\bar x_{k+1} - x_k\|^2 \\
		& \qquad \qquad \leq f_2(x) + \langle \nabla f_2(x_k) , \bar x_{k+1} - x\rangle + \frac{L_f}{2} \|\bar x_{k+1} - x_k\|^2 \ora{-\frac{\tau \mu_{f_2}}{\tau}\|x_k - x\|^2} \\
		& g_2^*(\bar y_{k+1}) \leq g_2^*(y_k) + \langle \nabla g_2^*(y_k) , \bar y_{k+1} - y_k\rangle + \frac{L_{g^*}}{2} \|\bar y_{k+1} - y_k\|^2 \\
		& \qquad \qquad \leq g_2^*(y) + \langle \nabla g_2^*(y_k) , \bar y_{k+1} - y\rangle + \frac{L_{g^*}}{2} \|\bar y_{k+1} - y_k\|^2 \ora{-\frac{\sigma \mu_{g_2^*}}{\sigma}\|y_k - y\|^2}
		\end{align*}
	}
	
	By definitions of $\bar x_{k+1}$ and $\bar y_{k+1}$, for all $x \in \mathcal X$ and $y \in \mathcal Y$, we have:
	\begin{align*}
	&f(\bar x_{k+1}) \leq f(x) + \langle \bl{\nabla f_2(x_k)} + A^\top y_k, x - \bar x_{k+1} \rangle + \frac{1}{2\tau}\|x - x_k\|^2 - \frac{1 \ora{+ \tau \mu_f}}{2\tau}\|x - \bar x_{k+1}\|^2 - \frac{1}{2\tau}\|\bar x_{k+1} - x_k\|^2  \\
	&g^*(\bar y_{k+1}) \leq g^*(y) + \langle \bl{\nabla g^*_2(y_k)} - A \bar x_{k+1}, y - \bar y_{k+1} \rangle + \frac{1}{2\sigma}\|y - y_k\|^2 - \frac{1+\ora{\sigma \mu_{g*}}}{2\sigma}\|y - \bar y_{k+1}\|^2 - \frac{1}{2\sigma}\|\bar y_{k+1} - y_k\|^2 
	\end{align*}
	Summing these inequalities and using the relations $x_{k+1} = \bar x_{k+1} - \tau A^\top (\bar y_{k+1} - y_k)$ and $y_{k+1} =\bar y_{k+1}$ yields
	\begin{align*}
	L(\bar x_{k+1}, y)& - L(x, \bar y_{k+1}) = f(\bar x_{k+1}) \bl{+f_2(\bar x_{k+1})} + \langle A \bar x_{k+1}, y\rangle -g^*(y) \bl{-g_2^*(y)} - f(x) \bl{-f_2(x)} \\
	&\qquad- \langle A x , \bar y_{k+1} \rangle + g^*(\bar y_{k+1})\bl{+g_2^*(\bar y_{k+1})} \\
	& \leq 
	\frac{1 \ora{-\tau \mu_{f_2}}}{2\tau}\|x - x_k\|^2  + \frac{1 \ora{-\sigma \mu_{g_2^*}}}{2\sigma}\|y - y_k\|^2 - \frac{1}{2\tau}\|x - x_{k+1}\|^2- \frac{1 }{2\sigma}\|y - y_{k+1}\|^2 \\
	& \qquad - \frac{1}{2\tau} \|x_{k+1} -\bar x_{k+1}\|^2 - \frac{1}{\tau}\langle x - x_{k+1}, x_{k+1} - \bar x_{k+1}\rangle\\
	& \qquad + \langle A \bar x_{k+1}, y\rangle - \langle A x , \bar y_{k+1} \rangle + \langle A^\top y_k, x - \bar x_{k+1} \rangle - \langle A \bar x_{k+1}, y - \bar y_{k+1} \rangle  \\
	& \qquad - \frac{1}{2\tau}\|\bar x_{k+1} - x_k\|^2  + \frac{1}{2\sigma}\|\bar y_{k+1} - y_k\|^2 \bl{+ \frac{L_f}{2} \|\bar x_{k+1} - x_k\|^2 + \frac{L_{g^*}}{2} \|\bar y_{k+1} - y_k\|^2 } \\
	& \qquad \ora{- \frac{\tau \mu_f}{2\tau}\|\bar x_{k+1} - x\|^2}\ora{- \frac{\sigma \mu_{g^*}}{2\sigma}\|\bar y_{k+1} - y\|^2}\\
	& = \frac 12 \|z - z_k\|^2_{V\ora{-\mu_2}} -\frac 12 \|z - z_{k+1}\|^2_V
	- \frac{\tau}{2} \|A^\top (\bar y_{k+1} - y_k)\|^2 \\
	& \qquad+ \langle x - \bar x_{k+1} + \tau A^\top (\bar y_{k+1} - y_k),  A^\top (\bar y_{k+1} - y_k)\rangle  + \langle A(\bar x_{k+1} - x), \bar y_{k+1} - y\rangle\\
	& \qquad - \frac 12 \|\bar z_{k+1} - z_k\|^2_V \bl{+ \frac{L_f}{2} \|\bar x_{k+1} - x_k\|^2 + \frac{L_{g^*}}{2} \|\bar y_{k+1} - y_k\|^2 } \ora{- \frac 12 \|\bar z_{k+1} - z\|^2_{\mu}} \\
	& = \frac 12 \|z - z_k\|^2_V -\frac 12 \|z - z_{k+1}\|^2_V
	+ \frac{\tau}{2} \|A^\top (\bar y_{k+1} - y_k)\|^2 - \frac 12 \|\bar z_{k+1} - z_k\|^2_V  \\
	& \qquad \bl{+ \frac{L_f}{2} \|\bar x_{k+1} - x_k\|^2 + \frac{L_{g^*}}{2} \|\bar y_{k+1} - y_k\|^2 } \ora{- \frac 12 \|\bar z_{k+1} - z\|^2_{\mu}}
	\end{align*}
	
\revisionone{Since $\tilde V(z_k, z^*) = \frac{1-\revisionone{\alpha_f}}{2\tau}  \|\bar x_{k+1} - x_k\|^2 + (\frac{1- \revisionone{\alpha_g} - \gamma}{2\sigma} ) \|\bar y_{k+1} - y_k\|^2$, $\alpha_f \geq \frac{\tau L_f}{2}$ and $\alpha_g = \frac{\sigma L_{g^*}}{2}$, we can write 
	\begin{align*}
	\frac{\tau}{2} \|A^\top (\bar y_{k+1} - y_k)\|^2& - \frac 12 \|\bar z_{k+1} - z_k\|^2_V \bl{+ \frac{L_f}{2} \|\bar x_{k+1} - x_k\|^2 + \frac{L_{g^*}}{2} \|\bar y_{k+1} - y_k\|^2 }  \\
	&\leq \frac 1{2\tau}(2 \alpha_f  - 1)\|\bar x_{k+1} - x_k\|^2 + \frac{1}{2\sigma}(\gamma + 2 \alpha_g - 1)\|\bar y_{k+1} - y_k\|^2  \\
	& \leq \max\big(\frac{2 \alpha_f  - 1}{1 - \alpha_f}, \frac{\gamma + 2 \alpha_g - 1}{1 - \gamma - \alpha_g}\big)\tilde V(z_k, z^*)
	\end{align*}
	Hence, 
\begin{align*}
L(\bar x_{k+1}, y)& - L(x, \bar y_{k+1})  \leq \frac 12 \|z - z_k\|^2_{V\ora{-\mu_2}} -\frac 12 \|z - z_{k+1}\|^2_V
 + a_2 \tilde V(z_k, z^*)\ora{- \frac 12 \|\bar z_{k+1} - z\|^2_{\mu}}
\end{align*}
where $a_2 = \max(\frac{2 \alpha_f  - 1}{1 - \alpha_f}, \frac{\gamma + 2 \alpha_g - 1}{1 - \gamma - \alpha_g}) \geq - 1$ may be negative or positive.
}
\end{proof}

\noindent{$\triangleright$ \bf Proposition \ref{prop:convpdhg}\;}
{\it	Let $z_0 \in \mathcal Z$ and let $R \subseteq \mathcal Z$. If $\sigma \tau \|A\|^2 \bl{+ \sigma L_{g^*}} \leq 1$ \bl{ and $\tau L_f \leq 1$}  then we have the stability 
	\[
	\|z_k -z^*\|_V \leq \|z_0 - z^*\|_V
	\]
	for all $z^* \in \mathcal Z^*$.

Define $\tilde z_k = \frac 1k \sum_{l=1}^k \bar z_l$
and the restricted duality gap 
$G(\bar z, R) = \sup_{z \in R} L(\bar x, y) - L(x, \bar y)$.
We have  the sublinear iteration complexity
	\[
	G(\tilde z_k, R) \leq \frac{1}{2k} \sup_{z \in R} \|z -z_0\|^2_V \; .
	\]
}
\begin{proof}

	For any $z^* \in \mathcal Z^*$, $L(\bar x_{k+1}, y^*) - L(x^*, \bar y_{k+1}) \geq 0$ \revisionone{which implies by  Lemma~\ref{lem:lag_ineq} the stability inequality, since $a_2 \leq 0$ in the case $\alpha_f \leq \frac 12$ and $2\alpha_g + \gamma \leq 1$.}
	\[
	\frac 12 \|z^* - z_{k+1}\|^2_V \leq \frac 12 \|z^* - z_k\|^2_V \leq  \frac 12 \|z^* - z_0\|^2_V \;.
	\]
	
	We then sum \eqref{ineq_on_lagrangians} for $k$ between 0 and $K-1$ and use convexity in $x$ and concavity in $y$ of the Lagrangian:
	\begin{align*}
	K \big( L(\tilde x_{K}, y) - L(x, \tilde y_{K})\big)&\leq \sum_{k=0}^{K-1} L(\bar x_{k+1}, y) - L(x, \bar y_{k+1})\leq 
	\frac 12 \|z - z_0\|^2_V -\frac 12 \|z - z_{K}\|^2_V
	- \sum_{k=0}^{K-1} \tilde V(\bar z_{k+1} - z_k)
	\end{align*}
	In particular, 
\begin{equation*}
	G((\tilde x_{K}, \tilde y_{K}), R) \leq  \frac {1}{2K}
	\sup_{z \in R}\|z - z_0\|^2_V - \|z - z_K\|^2_V \;. \qedhere
\end{equation*}
\end{proof}

\section{Proofs of Section~\ref{sec:linear}}
\label{sec:proofs_linconv}

\noindent{$\triangleright$ \bf Proposition \ref{prop:rate_strconv_equality}\;}
{\it
	If $f + f_2$ has a $L_f'+L_f$-Lipschitz gradient and is $\mu_f$-strongly convex, and $g + g_2 = \iota_{\{b\}}$, then PDHG converges linearly with rate
	\[
	(1 + \frac{\eta}{\revisionone{(2+a_2)(1+\eta/\lambda)}} ) \dist_V(z_{k+1}, \mathcal Z^*)^2 \leq \dist_V(z_{k}, \mathcal Z^*)^2 
	\]
	where $\eta = \min(\mu_f\tau, \frac{\sigma\tau \sigma_{\min}(A)^2}{\tau L_f+ \tau L_f' + \frac{1}{\lambda}})$, \revisionone{$\lambda$ is defined in Lemma \ref{lem:averaged_operator} and $a_2 \geq -1$ is defined in Lemma~\ref{lem:lag_ineq}}.
}
\begin{proof}
	We know by Lemmas~\ref{lem:lag_ineq} and \ref{lem:tildev} that for all $z = (x,y)$, 
	\begin{align*}
	L(\bar x_{k+1}, y) - L(x, \bar y_{k+1})& \leq \frac 12 \|z - z_k\|^2_V - \frac 12 \|z  -z_{k+1}\|^2_2 \revisionone{ + a_2 } \tilde V( z_k, z^*) \;.
	\end{align*}	
	We shall choose $y = y^* \in \mathcal Y^*$.
	By strong convexity of $f+f_2$,
	\begin{align*}
	L(\bar x_{k+1}, y^*) \geq  L(x^*, y^*) + \frac{\mu_f}{2} \|\bar x_{k+1} - x^*\|^2  \;.
	\end{align*}
	For the dual vector, we use the smoothness of the objective, the equality $\nabla f(x^*)+\nabla f_2(x^*) = -A^\top y^*$ and $Ax^* = b$.
	\begin{align*}
	-L(x, \bar y_{k+1}) &= -f(x) \bl{-f_2(x)} -\langle Ax - b, \bar y_{k+1}\rangle \\
	&\geq -f(x^*) \bl{-f_2(x^*)} - \langle \nabla f(x^*) - \nabla f_2(x^*) , x - x^*\rangle - \frac{L_f + L_f'}{2}\|x - x^*\|^2-\langle Ax - b, \bar y_{k+1}\rangle \\
	& = -L(x^*, y^*) + \langle A^\top y^*, x - x^*\rangle  - \langle x - x^*, A^\top \bar y_{k+1}\rangle- \frac{L_f + L_f'}{2}\|x - x^*\|^2
	\end{align*} 	
	For $a\in \mathbb R$, we choose $x = x^* + a A^\top (y^*-\bar y_{k+1})$ so that
	\begin{align*}
	-L(x^* \!+\! a A^\top\! (y^*\!-\!\bar y_{k+1}), \bar y_{k+1}) \geq -L(x^*, y^*) + (a\! -\! a^2\frac{L_f + L_f'}{2})\|A^\top\! (\bar y_{k+1}\! -\! y^*)\|^2 \;.
	\end{align*}
	
	
	%
	Moreover, we can show that $\|A^\top \bar y - A^\top y^*\| \geq \sigma_{\min(A)} \dist(\bar y, \mathcal Y^*)$, where $\sigma_{\min(A)}$ is the smallest singular value of $A$.
	Indeed, $\mathcal Y^* = \{y: A^\top y = -\nabla (f+f_2)(x^*)\} = P_{\mathcal Y^*}(\bar y) + \ker A^\top$ is an affine space. Here, we denoted by $P_{\mathcal Y^*}$ the orthogonal projection on $\mathcal Y^*$. We can then decompose $\bar y$ as $\bar y = P_{\mathcal Y^*}(\bar y) + z$ where $z \in \ker A^\top = (\im A)^\perp$.
	This leads to $\|A^\top \bar y - A^\top y^*\| = \|A^\top P_{\mathcal Y^*}(\bar y) - A^\top y^*\| \geq \sigma_{\min(A)} \|P_{\mathcal Y^*}(\bar y) - y^* \|$ because $P_{\mathcal Y^*}(\bar y) - y^* \in (\ker A^\top)^\perp$.
	
	We now develop
	\begin{align*}
	\frac{1}{2\tau} \|x^* + a A^\top (y^*& - \bar y_{k+1}) - x_k\|^2 - \frac{1}{2\tau} \|x^* + a A^\top (y^* - \bar y_{k+1}) - x_{k+1}\|^2 \\
	&= 
	\frac{1}{2\tau} \|x^*- x_k\|^2 - \frac{1}{2\tau} \|x^* - x_{k+1}\|^2 + \frac{a}{\tau}\langle x_k - x_{k+1}, A^\top (y^* - \bar y_{k+1})\rangle \\
	&\leq \frac{1}{2\tau} \|x^*- x_k\|^2 - \frac{1}{2\tau} \|x^* - x_{k+1}\|^2 + \frac{\lambda}{2\tau}\| x_k - x_{k+1}\|^2 + \frac{a^2}{2\tau \lambda} \|A^\top (y^* - \bar y_{k+1})\|^2
	\end{align*}
	Combining the three inequalities, we obtain
	\begin{multline*}
	\frac 12 \|z^* - z_k\|^2 - \frac 12 \|z^* - z_{k+1}\|^2 + a_2 \tilde V(z_k, z^*) \geq \frac{\mu_f}{2} \|\bar x_{k+1} - x^*\|^2 + \Big(a - a^2 \frac{L_f+L_f'}{2}-a^2\frac{1}{2\tau \Gamma}\Big)\|A^\top (\bar y_{k+1} - y^*)\|^2 \;.
	\end{multline*}
	
	%
	%
	We choose $a = \frac{\tau}{\tau L_f+ \tau L_f' + \frac{1}{\lambda}}$ and we use $\|A^\top \bar y - A^\top y^*\| \geq \sigma_{\min(A)} \dist(\bar y, \mathcal Y^*)$ to get
	\begin{multline*}
	\frac 12 \|z^* - z_k\|^2 - \frac 12 \|z^* - z_{k+1}\|^2  +a_2 \tilde V(z_k, z^*)
	\geq \frac{\mu_f\tau}{2} \|\bar x_{k+1} - x^*\|^2_{\tau{-1}} + \frac{\sigma\tau \sigma_{\min}(A)^2/2}{\tau L_f+ \tau L_f' + \frac{1}{\lambda}}\|\bar y_{k+1} - y^*\|^2_{\sigma^{-1}} \;.
	\end{multline*}
	Denote $\eta = \min(\mu_f\tau, \frac{\sigma\tau \sigma_{\min}(A)^2}{\tau L_f+ \tau L_f' + \frac{1}{\lambda}})$. \revisionone{We then add $\frac 12 (a_2+1)$ times \eqref{eq:nonexpansive_with_vtilde} and use Lemma \ref{lem:bars2nobars}  to get
		\begin{multline*}
	\frac {2 + a_2}2 \|z^* - z_k\|^2 - \frac {2+a_2}2 \|z^* - z_{k+1}\|^2 - \tilde V(z_k, z^*) + \frac{\eta (\alpha^{-1} - 1)}{2 \sigma} \|y_{k+1} - y_k\|^2
	\geq \frac{\eta(1-\alpha)}{2}\|z_{k+1} - z^*\|^2_V  \;.
	\end{multline*}

Taking $\alpha = \frac{\eta}{\lambda + \eta}$ chosen such that $\eta (\alpha^{-1} - 1 )= \lambda$ and using	Lemma \ref{lem:tildev} allows us to conclude.}
\end{proof}

\noindent{$\triangleright$ \bf Proposition \ref{prop:rate_msr}\;}
{\it
	If $\tilde \partial L$ is metrically subregular at $z^*$ for $0$ for all $z^* \in \mathcal Z^*$ with constant $\eta>0$ in the norm $\|\cdot\|_V$, then $(I-T)$ is metrically subregular at $z^*$ for 0 for all $z^* \in \mathcal Z^*$ with constant $\frac{\eta}{\sqrt 3 \eta + (2 + 2\sqrt 3 \max(\alpha_f, \alpha_g))}$ and PDHG converges linearly with rate $\bigg(1 -\frac{\eta^2\revisionone{\lambda}}{\Big(\sqrt 3 \eta + \big(2 + 2\sqrt 3 \max(\alpha_f, \alpha_g)\big)\Big)^2} \bigg)$.
}

\begin{proof}
	We denote $D(z) = [\tau x, \sigma y]$, $C(z) = \partial f(x) \times \partial g^*(y)$, $B(z) = [\nabla f_2(x), \nabla g_2^*(y)]$, $M(z) = [A^\top y, -A x]$ and $H(z) = [\tau^{-1} x, \sigma^{-1} y - Ax]$.
	This will help us decompose the operator $T$.
	
	First we remark that 
	\[
	\tilde \partial L(z) = (B+C+M)(z) \;.
	\]
	We continue with
	\begin{align*}
	&T(z) = z^+ = DH \bar z + (I - DH) z \\
	&x -\tau \nabla f_2(x) - \tau A^\top y- \bar x \in \tau \partial f(\bar x) \\
	& y - \sigma \nabla g_2^*(y) + \sigma A\bar x - \bar y \in \sigma \partial g^*(\bar y)
	\end{align*}
	so that using the fact that $(H-M)(z) = [\tau^{-1} x - A^\top y, \sigma^{-1}y]$, 
	\[
	\bar z = (C + H)^{-1} (H - M - B)(z) \;.
	\]
	Thus 
	\[
	T(z) = DH (C + H)^{-1} (H - M - B)(z) + (I - DH) z
	\]
	\begin{align*}
	(I - T)(z) = DH (I - (C + H)^{-1} (H - M - B))(z) = DH (z - \bar z) \;.
	\end{align*}
	\begin{align*}
	&	\tilde \partial L(\bar z) = (B+C+M)(\bar z) = B(\bar z) + (C+H)(\bar z) + (M -H)(\bar z) 
	\\
	&  B(\bar z) + (H - B - M)(z) + (M -H)(\bar z) \in \tilde \partial L(\bar z)
	\end{align*}
	so that 
	\begin{align*}
	&(H - B - M)(z -\bar z)=(H - B - M)(DH)^{-1}(I - T)(z) \in \tilde \partial L(\bar z) \;.
	\end{align*}
	Using the fact that $B$ is Lipschitz-continuous with constant $2 \max(\alpha_f, \alpha_{g})$ in the norm $\|\cdot \|_V$ and that $\|z\|_V = \|D^{-1/2}z\|$, this leads to
	\begin{align*}
	\eta \dist_V(\bar z, \mathcal Z^*) &\leq \|(H - B - M)(z -\bar z)\|_{V*}\\
	&\leq \|(H-M)(z -\bar z)\|_{V*}+\|B(z - \bar z)\|_{V*} \\
	& \leq \big(\|(H -M)(DH)^{-1}\|_{V*,V} +2\max(\alpha_f, \alpha_g)\big)
	\times \|(DH)^{-1}\|_V\|(I-T)(z)\|_V \\
	& = \big(\|D^{1/2}(H -M)H^{-1} D^{-1}D^{1/2}\|  + 2\max(\alpha_f, \alpha_g)\|D^{-1/2} H^{-1} D^{-1} D^{1/2}\|\big)\|(I-T)(z)\|_V \\
	& = \big(\|I - D^{1/2}M H^{-1}D^{-1/2}\| + 2\max(\alpha_f, \alpha_g)\|D^{-1/2} H^{-1} D^{-1/2}\|\big)\|(I-T)(z)\|_V
	\end{align*}
	Moreover, $\|D^{-1/2} H^{-1} D^{-1/2} z\|^2 \leq \|x\|^2 + 2\sigma \tau \|A\|^2 \|x\|^2 + 2 \|y\|^2 \leq 3 \|z\|^2$ and
	\begin{align*}
	\|I -D^{1/2} M H^{-1}D^{-1/2} z \|^2&= \| x - \sigma \tau A^\top A x + \sigma^{1/2}\tau^{1/2} A^\top y\|^2 + \|-\tau^{1/2} \sigma^{1/2} A x + y\|^2 \\
	& \leq 2(\|I - \sigma \tau A^\top A \|^2 \|x\|^2 + \sigma \tau \|A\|^2 \|y\|^2) + 2 (\tau \sigma \|A\|^2 \|x\|^2 + \|y\|^2) \\
	& \leq 4 \|z\|^2
	\end{align*}
	Gathering these three inequalities gives
	\[
	\|z - P_{\mathcal Z^*}(\bar z)\|_V = \dist_V(\bar z, \mathcal Z^*) \leq \eta^{-1}\big(2+ 2\max(\alpha_f, \alpha_g)\sqrt 3\big)\|(I-T)(z)\|_V \;.
	\]
	
	Finally, we remark that
	\begin{align*}
	\dist_V(z, \mathcal Z^*) &= \|z - P_{\mathcal Z^*}(z)\|_V \leq \|z - P_{\mathcal Z^*}(\bar z)\|_V
	\leq  \| \bar z - P_{\mathcal Z^*}(\bar z)\|_V + \| z - \bar z \|_V\\
	&\leq \eta^{-1}\big(2+ 2\max(\alpha_f, \alpha_g)\sqrt 3\big)\|(I-T)(z)\|_V  +\|(DH)^{-1}\|_V \|(I-T)(z)\|_V \\
	&\leq (\sqrt 3 + \eta^{-1} (2 + 2\sqrt 3 \max(\alpha_f, \alpha_g)))\|(I-T)(z)\|_V
	\end{align*}

	Then, to prove the linear rate of convergence, we recall that for all $z^* \in \mathcal Z^*$,
	\[
	\|T(z) - z^*\|^2_V \leq \|z - z^*\|^2_V - \lambda\|(I-T)(z)\|^2_V \;.
	\]
	Combined with the metric sub-regularity of $(I-T)$, we get
	\[
	\|T(z) - z^*\|^2_V \leq \|z - z^*\|^2_V - \frac{\eta^2\lambda}{\Big(\sqrt 3 \eta + \big(2 + 2\sqrt 3 \max(\alpha_f, \alpha_g)\big)\Big)^2}\dist_V(z, \mathcal Z^*)^2 \;.
	\]
	Choosing $z^* = P_{\mathcal Z^*}(z)$ leads to
	\begin{equation*}
	\dist_V(T(z), \mathcal Z^*)^2 \leq \|T(z) - P_{\mathcal Z^*}(z)\|^2_V 
	\leq \bigg(1 -\frac{\eta^2\lambda}{\Big(\sqrt 3 \eta + \big(2 + 2\sqrt 3 \max(\alpha_f, \alpha_g)\big)\Big)^2} \bigg) \dist_V(z, \mathcal Z^*)^2
	\end{equation*}
	and thus the linear rate of PDHG follows directly from this contraction property of operator $T$.
\end{proof}

\section{Proof of Proposition \ref{prop:rate_qebsm}}

\label{app:qeb_lp}

\noindent{$\triangleright$ \bf Proposition \ref{prop:rate_qebsm}\;}
{\it
	For any $\beta \geq 0$, $R>0$ and $z^* \in \mathcal Z^*$, the linear program \eqref{eq:lp} satisfies the quadratic error bound:
	for all $z$ such that $G_\beta(z; z^*) \leq R$, we have
	\begin{align*}
	G_\beta(z; z^*) \geq\frac{\dist(z, \mathcal Z^*)^2}{\theta^2 \Big(
		\sqrt{\frac{2\beta}{\tau}}(\sqrt 2 + \|x_F^*\| +\|x_N^*\| ) + \sqrt{\frac{2\beta}{\sigma}}( \sqrt 2 + \|y_E^*\| +\|y_{I}^*\|)+ 3 \sqrt R \Big)^2} \;.
	\end{align*}
	Hence, for $R$ of the order of $\frac 1 \theta$, $G_{\frac 1 \theta}(\cdot, z^*)$ has a $\frac c \theta$-QEB with $c$ independent of $\theta$. 
}
\begin{proof}
	First of all, we calculate the smoothed gap for \eqref{eq:lp}.
	\begin{align*}
	G_\beta(z; z^*) &= \!\!\sup_{z' \in \mathbb R^{n+m}} \!\!\langle c, x \rangle + I_{\mathbb R^N_+}(x_N)+ \langle A x ,y'\rangle - \langle b ,y'\rangle -I_{\mathbb R^I_+}(y'_I) - \frac{\beta}{2 \sigma} \|y' -y^*\|^2 \\
	& \qquad - \langle c, x'\rangle - I_{\mathbb R^N_+}(x'_N) - \langle A x', y \rangle + \langle b, y\rangle + I_{\mathbb R^I_+}(y_I) - \frac{\beta}{2 \tau } \|x' - x^*\|^2 \\
	& = \langle c, x \rangle + I_{\mathbb R^N_+}(x_N) + \langle A_{E,:} x - b_E, y_E^* \rangle + \frac{\sigma}{2\beta}\|A_{E,:} - b_E\|^2 \\
	& \qquad + \frac{\beta}{2\sigma} \|\max\big(0, y_I^* + \frac{\sigma}{\beta} (A_{I,:}x -b_I)\big)\|^2 - \frac{\beta}{2\sigma}\|y_I^*\|^2 
	+ \langle b, y \rangle \\
	& \qquad  + I_{\mathbb R^I_+}(y_I) - \langle (A_{:,F})^\top y + c_F, x_F^* \rangle + \frac{\tau}{2\beta}\|(A_{:,F})^\top y + c_F\|^2 \\
	& \qquad + \frac{\beta}{2\tau} \|\max\big(0, x_N^* - \frac{\tau}{\beta} ((A_{:,N})^\top y + c_N)\big)\|^2 - \frac{\tau}{2\sigma}\|x_N^*\|^2
	\end{align*}
	Let us denote $S^P_\beta(x, y^*) = G_\beta((x, y^*); z^*)$ and $S^D_\beta(y, x^*) = G_\beta((x^*, y); z^*)$ so that $G_\beta(z; z^*) = S^P_\beta(x, y^*) + S^D_\beta(y, x^*)$.
We know that $\dist(x, \mathcal X^*) \leq \theta \big(|c^\top x + b^\top y^*|^2 + \|A_{E,:} x - b_E\|^2
	+ \dist(A_{I,:} x - b_I, \mathbb R^I_-)^2+\dist(x_N, \mathbb R^N_+)^2\big)^{1/2}$ \revisionone{thanks to \eqref{eq:defhoffman}}. Our goal is to upper bound this by a function of $S_\beta^P(x, y^*)$.
	
	First, we note that $S_\beta^P(x, y^*) = \langle c, x \rangle + I_{\mathbb R^N_+}(x_N) + \langle A_{E,:} x - b_E, y_E^* \rangle + \frac{\sigma}{2\beta}\|A_{E,:}x - b_E\|^2 + \frac{\beta}{2\sigma} \|\max\big(0, y_I^* + \frac{\sigma}{\beta} (A_{I,:}x -b_I)\big)\|^2 - \frac{\beta}{2\sigma}\|y_I^*\|^2 + \langle b, y^* \rangle$ is the sum of many nonnegative terms:
	\begin{align*}
	& (A_{:,i}^\top y^* + c_i) x_i = 0  && \forall i \in F \\
	& (A_{:,i}^\top y^* + c_i) x_i \geq 0  && \forall i \in N \\
	& I_{\mathbb R_+}(x_i) \geq 0 &&\forall i \in N \\
	& \frac{\sigma}{2\beta}(A_{j,:}x - b_j)^2 \geq 0 &&\forall j \in E \\
	& \frac{\beta}{2 \sigma} \max\big(0, y_j^* + \frac{\sigma}{\beta} (A_{j,:}x -b_j)\big)^2 - \frac{\beta}{2\sigma}(y_j^*)^2 - (A_{j,:} x - b_j) y_j^* \geq 0 && \forall j \in I
	\end{align*}
	Suppose that $S_\beta^P(x, y^*) \leq \epsilon$. Then each of these terms is smaller than $\epsilon$.
	The most complex term is the last one. We shall consider separately 2 sub cases: $I_{-} = \{j \in I : y_j^* + \frac{\sigma}{\beta} (A_{j,:}x -b_j) \leq 0\}$, 
	and $I_{+} = \{j \in I : y_j^* + \frac{\sigma}{\beta} (A_{j,:}x -b_j) > 0\}$.
	
	If $j \in I_{+}$, then 
	\begin{align*}
	\frac{\beta}{2 \sigma} \max\big(0, y_j^* + \frac{\sigma}{\beta} (A_{j,:}x -b_j)\big)^2 - \frac{\beta}{2\sigma}(y_j^*)^2 - (A_{j,:} x - b_j) y_j^* = \frac{\sigma}{2\beta} (A_{j,:} x - b_j)^2 \;.
	\end{align*}
	Hence, if $S_\beta^P(x, y^*) \leq \epsilon$, then  $\sum_{j \in I_+} \max(0, A_{j,:} x - b_j )^2 \leq \sum_{j \in I_+} (A_{j,:} x - b_j)^2 \leq 2\beta \epsilon/\sigma$
	
	If $j \in I_{-}$, then $ -(A_{j,:}x -b_j) \geq \frac{\beta}{\sigma}y_j^*$, so that $(A_{j,:}x -b_j) \leq 0$.
	
	Combining both cases, 
	$\sum_{j \in I} \max(0, A_{j,:} x - b_j )^2 = \sum_{j \in I_+} \max(0, A_{j,:} x - b_j )^2  \leq 2\beta \epsilon/\sigma$.
	
	We now look at $\langle c, x \rangle + \langle b, y^*\rangle = \langle c + A^\top y^*, x \rangle + \langle b- A x, y^*\rangle$.
	$S_\beta^P(x, y^*) \leq \epsilon$ implies $0 \leq \langle c + A^\top y^*, x \rangle \leq \epsilon$. Then we need to focus on the complementary slackness $ \langle b- A x, y^*\rangle = \langle b_E- A_{E,:} x, y_E^*\rangle + \langle b_I- A_{I,:} x, y_I^*\rangle$.
	
	Since $S_\beta^P(x, y^*) \leq \epsilon$ implies $\|A_{E,:}x - b_E\|^2 \leq 2\beta\epsilon/\sigma$, we get 
	\[
	|\langle b_E- A_{E,:} x, y_E^*\rangle| \leq \|y_E\| \|A_{E,:}x - b_E\| \leq \sqrt{2\beta\epsilon/\sigma}\|y_E\| \;.
	\]
	
	For $I_+$, $|\sum_{j \in I_+} y_j^* (b_j - A_{j,:} x)| \leq \|y_{I_+}^* \| \|b_{I_+} - A_{I_+,:} x\| \leq \|y_{I}^* \| \sqrt{2\beta\epsilon/\sigma}$.
	
	For $I_-$, since $- \frac{\beta}{2\sigma}(y_j^*)^2 \geq \frac 12 (A_{j,:} x - b_j) y_j^*$, 
	\begin{align*}
	\epsilon &\geq \sum_{j \in I_-}\frac{\beta}{2 \sigma} \max\big(0, y_j^* + \frac{\sigma}{\beta} (A_{j,:}x -b_j)\big)^2 - \frac{\beta}{2\sigma}(y_j^*)^2 - (A_{j,:} x - b_j) y_j^* \\
	&=
	\sum_{j \in I_-}- \frac{\beta}{2\sigma}(y_j^*)^2 - (A_{j,:} x - b_j) y_j^*  \geq \sum_{j \in I_-} -\frac 12 (A_{j,:} x - b_j) y_j^* \geq 0
	\end{align*}
	Combining the three cases, we get
	\[
	\sqrt{2\beta\epsilon/\sigma}( \|y_E^*\| +\|y_{I}^* \|) \leq  \langle c, x \rangle + \langle b, y^*\rangle \leq \sqrt{2\beta\epsilon/\sigma}( \|y_E^*\| +\|y_{I}^* \|) + 3 \epsilon \;.
	\]
	
	Finally, for $x$ such that $x_N \geq 0$, 
	\begin{align*}
	\big(|c^\top x + b^\top y^*|^2 + &\|A_{E,:} x - b_E\|^2
	+ \dist(A_{I,:} x - b_I, \mathbb R^I_-)^2+\dist(x_N, \mathbb R^N_+)^2 \big)^{1/2}\\
	&\leq \Big(\Big(\sqrt{\frac{2\beta\epsilon}{\sigma}}( \|y_E^*\| +\|y_{I}^* \|) + 3 \epsilon\Big)^2 + \frac{2\beta\epsilon}{\sigma} + \frac{2\beta\epsilon}{\sigma}\Big)^{1/2} \\
	&\leq \sqrt{\frac{2\beta\epsilon}{\sigma}}( \|y_E^*\| +\|y_{I}^* \|) + 3 \epsilon + 2\sqrt{\frac{\beta\epsilon}{\sigma}}
	\end{align*}
	
	The argument for the dual problem is exactly the same. Hence
	\begin{align*}
	\dist(z, \mathcal Z^*) \leq \theta \Big(&
	\sqrt{\frac{2\beta}{\tau}}(\sqrt 2 + \|x_F^*\| +\|x_N^*\|)\sqrt{G_\beta(z; z^*)} \\
	& +\sqrt{\frac{2\beta}{\sigma}} (\sqrt 2 + \|y_E^*\| +\|y_{I}^*\|)\sqrt{G_\beta(z; z^*)} + 3 G_\beta(z; z^*) \Big) \;.
	\end{align*}
	If $G_\beta(z; z^*) \leq R$, we get the quadratic error bound
	\begin{equation*}
	G_\beta(z; z^*) \geq \frac{\dist(z, \mathcal Z^*)^2}{\theta^2 \Big(
		\sqrt{\frac{2\beta}{\tau}}(\sqrt 2 + \|x_F^*\| +\|x_N^*\| ) + \sqrt{\frac{2\beta}{\sigma}}( \sqrt 2 + \|y_E^*\| +\|y_{I}^*\|)+ 3 \sqrt R \Big)^2} \;. \qedhere
	\end{equation*} 
\end{proof}

\section{Idea to take profit of strong convexity}
\label{sec:strconv+smqeb}

\begin{proposition}
\label{prop:strconv+smqeb}
Suppose that $\mu_f > 0$, $g = \iota_{\{b\}}$
and $G_\beta(\cdot, z^*)$ has a $\eta$-QEB where $\frac{1}{\beta_x} \geq \frac{1}{\beta_y}+\sqrt{\eta_x} - \eta_x$. 
Then, for all $C > 0$, 
\[
(1+\lambda_4)\dist_V(z_{k+1} - z^*)^2 + \lambda_1 \|z_{k+1} - z_k\|^2_V \leq \rho \Big((1+\lambda_4)\dist_V(z_{k} - z^*)^2 + \lambda_1 \|z_{k} - z_{k-1}\|^2_V \Big)
\]
where, denoting $\alpha_1 = \frac{2\mu_f \sigma \tau}{2 \mu_f \sigma\tau + \Gamma}$:
\begin{itemize}
	\item if $2 \mu_f \tau (1-\alpha_1) \leq C \eta_x$, then $\lambda_1 = 0$, $\lambda_4 = \frac{1}{\beta_x\Gamma}-1$ and
	\[
	\rho = \max\Big((1+\frac{C\eta_x\beta_x}{\Gamma})^{-1}, (1+\frac{\eta_y\beta_x}{\Gamma})^{-1}\Big)\;;
	\]
\item if $2 \mu_f \tau (1-\alpha_1) >C \eta_x$ and $\frac{\frac{1}{\beta_x} - \Gamma}{2 \mu_f(1-\alpha_1) - C \eta_x} > \frac{ - \frac {1}{\beta_y} + \frac{(1 - \sqrt{\eta_x} - C) \eta_x}{2 \gamma(1-\sqrt{\eta_x})} - C\revisionone{\sqrt{\eta_x}} + \frac{1}{\beta_x}}{2\mu_f(1-\alpha_1)}$, 
then we take \\
$\lambda_1 = \frac{ - \frac {1}{\beta_y} + \frac{(1 - \sqrt{\eta_x} - C) \eta_x}{2 \gamma(1-\sqrt{\eta_x})} - C\revisionone{\sqrt{\eta_x}} + \frac{1}{\beta_x}}{2\mu_f\tau(1-\alpha_1)}$, $\lambda_4 = \frac{\frac{1}{\beta_x} - \lambda_1 (2\mu_f\tau (1-\alpha_1) - C\eta_x)}{\Gamma} - 1$ and we have
\[
\rho = \Big(1+\frac{\min(C \eta_x, \eta_y)\Gamma}{\frac{1}{\beta_x} - \frac{2\mu_f\tau(1-\alpha_1) -C\eta_x}{2\mu_f\tau(1-\alpha_1)}( - \frac {1}{\beta_y} + \frac{(1 - \sqrt{\eta_x} - C) \eta_x}{2 \gamma(1-\sqrt{\eta_x})} - C\revisionone{\sqrt{\eta_x}} + \frac{1}{\beta_x})}\Big)^{-1}
\]
\item if $2 \mu_f \tau (1-\alpha_1) >C \eta_x$ and $\frac{\frac{1}{\beta_x} - \Gamma}{2 \mu_f\tau (1-\alpha_1) - C \eta_x} \leq \frac{ - \frac {1}{\beta_y} + \frac{(1 - \sqrt{\eta_x} - C) \eta_x}{2 \gamma(1-\sqrt{\eta_x})} - C\revisionone{\sqrt{\eta_x}} + \frac{1}{\beta_x}}{2\mu_f\tau(1-\alpha_1)}$, then $\lambda_4 = 0$, $\lambda_1 = \frac{\frac{1}{\beta_x} - \Gamma}{2 \mu_f\tau(1-\alpha_1) - C \eta_x}$ and 
\[
\rho = \max\big((1+C\eta_x)^{-1}, (1+\eta_y)^{-1}\big)
\]
\end{itemize}
\end{proposition}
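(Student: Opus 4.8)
The plan is to refine the single-Lyapunov argument of Theorem~\ref{thm:rate_pdhg_qebsm} by enlarging the potential with the successive-difference term $\|z_{k+1}-z_k\|^2_V$, so as to capture the fact observed in Section~\ref{sec:toy_quadratic} that the spectral radius of the iteration matrix is far smaller than its operator norm $1-\gamma$. First I would start, exactly as in Theorem~\ref{thm:rate_pdhg_qebsm}, from the Lagrangian inequality \eqref{ineq_on_lagrangians} with the smoothing centered at $z^*=P_{\mathcal Z^*}(z_k)$, but now retain the strong-convexity term: since $\mu_f>0$ and $g=\iota_{\{b\}}$, the strongly convex version of Lemma~\ref{lem:lag_ineq} supplies an extra $\frac{\mu_f}{2}\|\bar x_{k+1}-x^*\|^2$ on the left-hand side. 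The idea is then to split the quadratic error bound of $G_\beta(\cdot,z^*)$ into a primal part (constant $\eta_x$) and a dual part (constant $\eta_y$), and to allocate only a fraction $C\eta_x$ of the primal QEB to the dual/difference budget, letting the remaining primal contraction be driven by the fast strong-convexity dynamics quantified by $\alpha_1=\frac{2\mu_f\sigma\tau}{2\mu_f\sigma\tau+\Gamma}$. This is the slow–fast decoupling: the primal variable contracts quickly toward $\arg\min_{x'}L(x',y_k)$ while the dual variable contracts only at the QEB rate.

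Next I would control the new Lyapunov term. Writing $z_{k+1}-z_k=(I-T)(z_k)$ and applying the averaged-operator inequality \eqref{averaged} to the pair $(z_k,z_{k-1})$ gives $\|z_{k+1}-z_k\|^2_V\le\|z_k-z_{k-1}\|^2_V-\lambda\|(z_{k+1}-z_k)-(z_k-z_{k-1})\|^2_V$, i.e.\ a monotone decay of the difference norm. Lemma~\ref{lem:tildev} then lets me convert the $\|x_k-x_{k+1}\|^2$ and $\|y_k-y_{k+1}\|^2$ contributions issuing from Lemma~\ref{lem:lag_ineq} into multiples of $\tilde V(z_k,z^*)$, hence into $\|z_{k+1}-z_k\|^2_V$, and Lemma~\ref{lem:bars2nobars} (with the auxiliary parameter set to $\alpha_1$) replaces $\dist_V(\bar z_{k+1},\mathcal Z^*)$ by $\dist_V(z_{k+1},\mathcal Z^*)$. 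The outcome is a coupled inequality involving the four quantities $\dist_V(z_{k+1},\mathcal Z^*)^2$, $\dist_V(z_k,\mathcal Z^*)^2$, $\|z_{k+1}-z_k\|^2_V$ and $\|z_k-z_{k-1}\|^2_V$.

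Then I would choose the potential weights $\lambda_1,\lambda_4$ and the split parameter $C$ so that this coupled inequality becomes a genuine contraction of $(1+\lambda_4)\dist_V(\cdot,\mathcal Z^*)^2+\lambda_1\|\cdot\|^2_V$ by the factor $\rho$: the coefficients multiplying $\|z_{k+1}-z_k\|^2_V$ and $\dist_V(z_{k+1},\mathcal Z^*)^2$ must be matched against those multiplying $\|z_k-z_{k-1}\|^2_V$ and $\dist_V(z_k,\mathcal Z^*)^2$, which is a small linear program in $\lambda_1,\lambda_4$. The three cases of the statement correspond to which sign constraint is active: when $2\mu_f\tau(1-\alpha_1)\le C\eta_x$ the fast primal contraction is too weak to justify the extra term and the optimum sits at $\lambda_1=0$; when $2\mu_f\tau(1-\alpha_1)>C\eta_x$ one of two thresholds on $\lambda_1$ becomes binding, producing either $\lambda_4>0,\lambda_1>0$ or $\lambda_4=0$. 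The hypothesis $\frac1{\beta_x}\ge\frac1{\beta_y}+\sqrt{\eta_x}-\eta_x$ is precisely what guarantees that the residual coefficient of $\|z_k-z_{k-1}\|^2_V$ carries the right sign in each branch.

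I expect the main obstacle to be exactly this coefficient bookkeeping: verifying, with the announced closed-form values of $\lambda_1$ and $\lambda_4$, that every coefficient required to be nonpositive for the absorption to succeed is indeed nonpositive, and that the contraction factor collapses to the stated $\rho$ in each regime. The delicate point is the interplay between the optimal choice $\alpha=\alpha_1$, the parameter $C$ partitioning $\eta_x$ between the fast primal contraction and the dual/difference terms, and the threshold comparisons delimiting the cases; making these thresholds coincide exactly with the boundary where a sign constraint switches from slack to active is the crux of the argument.
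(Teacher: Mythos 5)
Your overall skeleton — an augmented potential $(1+\lambda_4)\dist_V(\cdot,\mathcal Z^*)^2+\lambda_1\|\cdot\|_V^2$, a multiplier-based combination of inequalities, and three cases determined by which sign constraint is active — matches the paper. But you have misplaced the one ingredient that makes the whole argument work: where strong convexity enters. You attach $\mu_f$ to the Lagrangian inequality (an extra $\tfrac{\mu_f}{2}\|\bar x_{k+1}-x^*\|^2$, a distance-to-optimum term), and for the difference term you use only the plain averaged-operator bound \eqref{averaged}, which gives mere monotone decay $\|z_{k+1}-z_k\|_V^2\le\|z_k-z_{k-1}\|_V^2-\lambda\|(z_{k+1}-z_k)-(z_k-z_{k-1})\|_V^2$. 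In the paper, strong convexity is used in exactly the opposite place: the appendix version of Lemma~\ref{lem:averaged_operator} applied to the pair $(z_k,z_{k-1})$ carries the extra term $2\mu_f\|\bar x_{k+1}-\bar x_k\|^2$, and after converting $\|\bar x_{k+1}-\bar x_k\|^2\ge(1-\alpha_1)\|x_{k+1}-x_k\|^2-(\alpha_1^{-1}-1)\tau\|A^\top(y_{k+1}-y_k-y_k+y_{k-1})\|^2$ and absorbing the negative term into $-\lambda\|(z_{k+1}-z_k)-(z_k-z_{k-1})\|_V^2$ (this absorption is precisely what fixes $\alpha_1=\tfrac{2\mu_f\sigma\tau}{2\mu_f\sigma\tau+\Gamma}$), one gets $\|z_{k+1}-z_k\|_V^2+2\mu_f(1-\alpha_1)\|x_{k+1}-x_k\|^2\le\|z_k-z_{k-1}\|_V^2$. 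This strict improvement on the \emph{difference} sequence is the only source of contraction for the $\lambda_1$-part of the potential: it is what puts the term $2\lambda_1\mu_f\tau(1-\alpha_1)$ into the coefficient of $\|x_{k+1}-x_k\|_{\tau^{-1}}^2$, against which $-\tfrac{1}{\beta_x}$ from the smoothed-gap bound is paid off. Without it, that coefficient is $\lambda_1-\tfrac1{\beta_x}+(\lambda_4-a_2)\lambda$, no choice of $\lambda_1,\lambda_4$ can reproduce rates whose formulas have denominator $2\mu_f\tau(1-\alpha_1)-C\eta_x$, and your scheme degenerates to (essentially) Theorem~\ref{thm:rate_pdhg_qebsm} with a decorative extra term. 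The fact that your $\alpha_1$ is never actually generated by any step of your plan (you instead recycle it as the Lemma~\ref{lem:bars2nobars} parameter, which the paper sets to a \emph{different} value $\alpha_2=\sqrt{\eta_x}$ — that is where the $\sqrt{\eta_x}$ in the statement comes from) is the symptom of this misplacement.

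Second concrete gap: you invoke a split of the primal QEB, "allocating a fraction $C\eta_x$ to the dual/difference budget," but you supply no mechanism that converts primal distance into control of the dual difference. The paper's mechanism is a third inequality obtained from the optimality condition of the dual update, valid because $g^*$ is linear when $g=\iota_{\{b\}}$: from $0\in\partial g(y_{k+1})+A\bar x_{k+1}+\tfrac1\sigma(y_{k+1}-y_k)$ one gets $\|y_{k+1}-y_k\|^2_{\sigma^{-1}}\le 2\gamma\|\bar x_{k+1}-x^*\|^2_{\tau^{-1}}$ (plus terms vanishing here), which is then added with multiplier $\lambda_3=\tfrac{(1-\sqrt{\eta_x}-C)\eta_x}{2\gamma(1-\sqrt{\eta_x})}$. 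This simultaneously subtracts $\lambda_3\gamma$ from the primal QEB coefficient (whence the surviving $C\eta_x$) and adds $\lambda_3$ to the coefficient of $\|y_{k+1}-y_k\|^2$, which is what makes that coefficient nonnegative under the hypothesis $\tfrac1{\beta_x}\ge\tfrac1{\beta_y}+\sqrt{\eta_x}-\eta_x$. Without this inequality your condition on the $\|y_{k+1}-y_k\|^2$ coefficient cannot be satisfied with the announced $\lambda_1,\lambda_4$, and neither $C$ nor the term $\tfrac{(1-\sqrt{\eta_x}-C)\eta_x}{2\gamma(1-\sqrt{\eta_x})}$ appearing in the stated formulas can arise. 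So the proof as planned fails at the coefficient bookkeeping you yourself flag as the crux; repairing it requires both the strongly convex difference inequality and the dual-update inequality, not merely a more careful linear program in $(\lambda_1,\lambda_4)$.
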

In order to use this proposition, we shall compute $\rho$ for a grid of values of $C$ and select the best one.
\begin{proof}
	We shall write the proof for $\mu_g > 0$, even though we state the proposition for $\mu_g = +\infty$ only. We apply Lemma~\ref{lem:averaged_operator} to $z = z_k$ and $z' = z_{k-1}$ so that $T(z) = z_{k+1}$ and $T(z') = z_k$. \revisionone{Note that we apply the appendix version of Lemma~\ref{lem:averaged_operator} in order to leverage the most of strong convexity.} 
\begin{align*}
\|z_{k+1} - z_k\|^2_V \ora{+2\mu_f  \|\bar x_{k+1} - \bar x_{k}\|^2}& \leq 
\|z_k - z_{k-1}\|_V^2 - \lambda \|z_k - z_{k+1} - z_{k-1} + z_k\|^2_V  \;.
\end{align*}
\begin{align*}
\|\bar x_{k+1} - \bar x_{k}\|^2 &= \|x_{k+1} + \tau A^\top(y_{k+1} - y_{k})
- x_k - \tau A^\top (y_k - y_{k-1})\|^2\\
& \geq (1 - \alpha_1)  \|x_{k+1} - x_k\|^2 - (\alpha_1^{-1} - 1)\tau \|A^\top (y_{k+1} - y_{k} - y_k - y_{k-1})\|^2
\end{align*}
We choose $\alpha_1$ such that $2 \mu_f(\alpha_1^{-1} - 1)\tau = \frac{\lambda}{\sigma}$, i.e. $\alpha_1 = (1 + \frac{\lambda}{2\mu_f\sigma\tau})^{-1} \in O(\mu_f)$, which leads to 
\[
\|z_{k+1} - z_k\|^2_V \ora{+2\mu_f(1-\alpha_1)  \|x_{k+1} - x_{k}\|^2} \leq 
\|z_k - z_{k-1}\|_V^2 
\]

We also have
\begin{align*}
\frac{\eta_x}{2} \|&\bar x_{k+1} - x^*\|^2_{\tau^{-1}} + \frac{\eta_y}{2} \|\bar y_{k+1} - y^*\|^2_{\sigma^{-1}} \leq G_{\beta}(\bar z_{k+1}, z^*) \\
&\leq \frac 12 \|z_k - z^*\|^2_V - \frac 12 \|z_{k+1} - z^*\|^2_V + \frac {1}{2\beta_x} \|x_{k+1} - x_k\|^2_{\tau^{-1}} + \frac {1}{2\beta_y} \|y_{k+1} - y_k\|^2_{\sigma^{-1}} \revisionone{+ a_2} \tilde V(\bar z_{k+1} - z_k)
\end{align*}

Moreover, since $0 \in \partial g(y_{k+1}) + \nabla g_2(y_k) + A \bar x_{k+1} + \frac {1}{\sigma} (y_{k+1} - y_k)$, 
\begin{align*}
\|y_{k+1} - y_k\|_{\sigma^{-1}} &\leq \sqrt{\sigma} (\|A\bar x_{k+1} - A x^*\| + \frac{1}{\mu_g}\|y_{k+1} - y^*\| + L_{g_2^*}\|y_k - y^*\|) 
\\
&\leq \sqrt \gamma \|\bar x_{k+1} - x^*\|_{\tau^{-1}}  + \frac{ \sigma}{\mu_g}\|y_{k+1} - y^*\|_{\sigma^{-1}} + \sigma L_{g_2^*}\|y_k - y^*\|_{\sigma^{-1}} \\
\|y_{k+1} - y_k\|_{\sigma^{-1}}^2 &\leq 2 \gamma \|\bar x_{k+1} - x^*\|_{\tau^{-1}}^2  + 4\frac{ \sigma}{\mu_g}\|y_{k+1} - y^*\|_{\sigma^{-1}}^2 + 4 \sigma L_{g_2^*}\|y_k - y^*\|_{\sigma^{-1}}^2
\end{align*}
We then sum the three inequalities with factors $\lambda_i \geq 0$, $i\in \{1, 2, 3\}$.
\begin{align*}
&\Big(\frac{\lambda_2\eta_x}{2} - \lambda_3 \gamma\Big) \|\bar x_{k+1} - x^*\|^2_{\tau^{-1}} + \Big(\frac{\lambda_2 \eta_y}{2}-\frac{2\lambda_3\sigma}{\mu_g}\Big) \|\bar y_{k+1} - y^*\|^2_{\sigma^{-1}} + \frac {\lambda_2}{2} \|z_{k+1} - z^*\|^2_V
\\
&\qquad +\Big(\frac{\lambda_1}{2} + \lambda_1 \mu_f\tau (1-\alpha_1)- \frac {\lambda_2}{2\beta_x}\Big) \|x_{k+1} - x_k\|^2_{\tau^{-1}} 
+\Big(\frac{\lambda_1}{2} - \frac {\lambda_2}{2\beta_y} + \frac{\lambda_3}{2}\Big) \|y_{k+1} - y_k\|^2_{\sigma^{-1}} \\
&\qquad - \lambda_2 a_2 \tilde V(\bar z_{k+1} - z_k) \\
&\leq \frac {\lambda_2}{2} \|z_k - z^*\|^2_V + \frac{\lambda_1}{2} \|z_k - z_{k-1}\|^2_V + 2\lambda_3 \sigma L_{g_2^*}\|y_k - y^*\|_{\sigma^{-1}}^2
\end{align*}
We combine with
\begin{align*}
\|\bar x_{k+1} - x^*\|^2_{\tau^{-1}} &\geq (1-\alpha_2)\|x_{k+1} - x^*\|^2_{\tau^{-1}} - (\alpha_2^{-1} - 1) \|\bar x_{k+1} - x_{k+1}\|_{\tau^{-1}}^2 \\
&\geq (1-\alpha_2)\|x_{k+1} - x^*\|^2_{\tau^{-1}} - (\alpha_2^{-1} - 1)\|y_{k+1} - y_{k}\|_{\sigma^{-1}}^2
\end{align*}
and 
\begin{align*}
\frac 12 \|z_{k+1} - z^*\|^2_V \leq \frac 12 \|z_k - z^*\|^2_V - \tilde V(\bar z_{k+1} - z_k)
\end{align*}
to get
\begin{align*}
&\Big(\big(\frac{\lambda_2\eta_x}{2} - \lambda_3 \gamma\big)(1-\alpha_2) + \frac{\lambda_2}{2} + \frac{\lambda_4}{2} \Big) \| x_{k+1} - x^*\|^2_{\tau^{-1}} + \Big(\frac{\lambda_2 \eta_y}{2}-\frac{2\lambda_3\sigma}{\mu_g} + \frac {\lambda_2}{2} +  \frac{\lambda_4}{2}\Big) \| y_{k+1} - y^*\|^2_{\sigma^{-1}} 
\\
&\qquad+\Big(\frac{\lambda_1}{2} + \lambda_1 \mu_f\tau (1-\alpha_1)- \frac {\lambda_2}{2\beta_x} + (\lambda_4-\lambda_2a_2) \frac{\lambda}{2}\Big) \|x_{k+1} - x_k\|^2_{\tau^{-1}}  \\
&\qquad+\Big(\frac{\lambda_1}{2} - \frac {\lambda_2}{2\beta_y} + \frac{\lambda_3}{2} - \big(\frac{\lambda_2\eta_x}{2} - \lambda_3\sqrt{\gamma}\big)(\alpha_2^{-1}-1) + (\lambda_4 - \lambda_2a_2) \frac{\lambda}{2}\Big) \|y_{k+1} - y_k\|^2_{\sigma^{-1}} \\
&\leq \frac {\lambda_2+\lambda_4}{2} \|z_k - z^*\|^2_V + \frac{\lambda_1}{2} \|z_k - z_{k-1}\|^2_V + 2\lambda_3 \sigma L_{g_2^*}\|y_k - y^*\|_{\sigma^{-1}}^2
\end{align*}

To get the rate, we then need
\begin{align*}
&\rho \Big(\big(\lambda_2\eta_x - 2 \lambda_3 \gamma\big)(1-\alpha_2) + \lambda_2+\lambda_4\Big) \geq \lambda_2 + \lambda_4\\
& \rho \Big(\lambda_2 \eta_y-\frac{4\lambda_3\sigma}{\mu_g} + \lambda_2+\lambda_4\Big) \geq \lambda_2+ \lambda_4 + 4 \lambda_3 \sigma L_{g_2^*} \\
& \rho \Big(\lambda_1 + 2 \lambda_1 \mu_f \tau (1-\alpha_1)- \frac {\lambda_2}{\beta_x} + \revisionone{(\lambda_4-\lambda_2 a_2)\lambda})\Big)
\geq \lambda_1 \\
& \rho \Big(\lambda_1 - \frac {\lambda_2}{\beta_y} + \lambda_3 - \big(\lambda_2\eta_x - 2\lambda_3 \gamma\big)(\alpha_2^{-1}-1) + \revisionone{(\lambda_4-\lambda_2 a_2)\lambda}\Big) \geq \lambda_1
\end{align*}

We choose $\alpha_2 = \sqrt{\eta_x}$, $\lambda_3 = \frac{(1 - \alpha_2 - C) \eta_x}{2 \gamma(1-\alpha_2)}$ and $\lambda_2 = 1$.
We shall let the choice of $C \in [0, 1-\alpha_2]$ for a 1D grid search since the rate will depend a lot on its value. \revisionone{This yields $\big(\lambda_2\eta_x - 2 \lambda_3 \gamma\big)(1-\alpha_2) = C\eta_x$.}

We assume that $\frac{1}{\beta_x} \geq \frac{1}{\beta_y} + \eta_x (\alpha_2^{-1} - 1)$.

\underline{Case 1:} if $2 \mu_f \tau (1-\alpha_1) \leq C \eta_x$, we choose $\lambda_1 = 0$ and $\lambda_4 = \frac{1}{\beta_x\lambda} \revisionone{+a_2}$.
this leads to
\begin{align*}
&\rho \Big( 1 + \lambda_4 + C \eta_x \Big) \geq 1 + \lambda_4 \\
&\rho \Big(1+ \lambda_4 + \eta_y-\frac{4\lambda_3\sigma}{\mu_g}\Big) \geq 1+ \lambda_4 + 4 \lambda_3 \sigma L_{g_2^*}\\
& - \frac {1}{\beta_x} + (\lambda_4-a_2) \lambda
=0 \geq  0 \\
&  - \frac {1}{\beta_y} + \frac{(1 - \alpha_2 - C) \eta_x}{2 \gamma(1-\alpha_2)} - \frac{C\eta_x}{1-\alpha_2}(\alpha_2^{-1}-1) + \frac{1}{\beta_x} \\
& \hspace{10em} \geq \frac{(1 - \alpha_2 - C) \eta_x}{2 \gamma(1-\alpha_2)}- C\eta_x \alpha_2^{-1} +  \eta_x (\alpha_2^{-1} - 1) \geq  \eta_x (\alpha_2^{-1} - 1) - (1-\alpha_2)\alpha_2^{-1}\eta_x =  0
\end{align*}
where the last inequality uses $C \leq 1-\alpha_2$. Supposing that $\mu_g = +\infty$ and $L_{g^*_2} = 0$, we get a rate
$\rho = \max((1+\frac{C\eta_x}{1+a_2+1/(\lambda\beta_x)})^{-1}, (1+\frac{\eta_y}{1+a_2+1/(\lambda\beta_x)})^{-1})$.

\underline{Case 2:} if $2 \mu_f \tau (1-\alpha_1) > C \eta_x$ and $\frac{\frac{1}{\beta_x} + a_2 \lambda}{2 \mu_f\tau (1-\alpha_1) - C \eta_x} > \frac{ - \frac {1}{\beta_y} + \frac{(1 - \alpha_2 - C) \eta_x}{2 \gamma(1-\alpha_2)} - C\eta_x\alpha_2^{-1} + \frac{1}{\beta_x}}{2\mu_f\tau (1-\alpha_1)}$

We choose $\lambda_1 = \frac{ - \frac {1}{\beta_y} + \lambda_3 - C\eta_x\alpha_2^{-1} + \frac{1}{\beta_x}}{2\mu_f\tau(1-\alpha_1)}$ and $\lambda_4 = \frac{\frac{1}{\beta_x} - \lambda_1 (2\mu_f\tau (1-\alpha_1) - C\eta_x)}{\lambda} +a_2$.
We get  $2 \lambda_1 \mu_f \tau (1-\alpha_1)- \frac {\lambda_2}{\beta_x} + (\lambda_4-\lambda_2 a_2)\lambda = 2 \lambda_1 \mu_f \tau (1-\alpha_1) - \frac 1{\beta_x} + \frac{1}{\beta_x} - 2 \lambda_1 \mu_f \tau (1-\alpha_1) + \lambda_1C\eta_x = \lambda_1 C\eta_x$ and $- \frac {\lambda_2}{\beta_y} + \lambda_3 - \big(\lambda_2\eta_x - 2\lambda_3 \gamma\big)(\alpha_2^{-1}-1) + (\lambda_4-\lambda_2 a_2)\lambda = - \frac {1}{\beta_y} + \lambda_3 - C\eta_x\alpha_2^{-1} + \frac 1{\beta_x} - \lambda_1 2\mu_f\tau(1-\alpha_1) + \lambda_1 C \eta_x = \lambda_1 C\eta_x$. Hence,
\begin{align*}
&\rho \Big( 1 + \lambda_4 + C \eta_x \Big) \geq 1 + \lambda_4 \\
&\rho \Big(1+ \lambda_4 + \eta_y-\frac{4\lambda_3\sigma}{\mu_g}\Big) \geq 1+ \lambda_4 + 4 \lambda_3 \sigma L_{g_2^*}\\
& \rho \Big(\lambda_1 + C \eta_x \lambda_1\Big)
\geq \lambda_1 \\
& \rho \Big(\lambda_1 + C \eta_x \lambda_1\Big) \geq \lambda_1
\end{align*}
Supposing that $\mu_g = +\infty$ and $L_{g^*_2} = 0$, we get a rate
$\rho = \max((1+\frac{C \eta_x}{1+\lambda_4})^{-1}, (1+\frac{\eta_y}{1+\lambda_4})^{-1}) = (1+\frac{\min(C \eta_x, \eta_y)\lambda}{\frac{1}{\beta_x} - \frac{2\mu_f\tau(1-\alpha_1) -C\eta_x}{2\mu_f\tau(1-\alpha_1)}( - \frac {1}{\beta_y} + \frac{(1 - \alpha_2 - C) \eta_x}{2 \gamma(1-\alpha_2)} - C\eta_x\alpha_2^{-1} + \frac{1}{\beta_x})+a_2\lambda})^{-1}$.

\underline{Case 3:} if $2 \mu_f \tau (1-\alpha_1) >C \eta_x$ and $\frac{\frac{1}{\beta_x} + a_2 \lambda}{2 \mu_f\tau (1-\alpha_1) - C \eta_x} \leq \frac{ - \frac {1}{\beta_y} + \frac{(1 - \alpha_2 - C) \eta_x}{2 \gamma(1-\alpha_2)} - C\eta_x\alpha_2^{-1} + \frac{1}{\beta_x}}{2\mu_f\tau(1-\alpha_1)}$

We choose $\lambda_4 = 0$ and $\lambda_1 = \frac{\frac{1}{\beta_x} + a_2 \lambda}{2 \mu_f\tau(1-\alpha_1) - C \eta_x}$. We get $- \frac {1}{\beta_y} + \frac{(1 - \alpha_2 - C) \eta_x}{2 \gamma(1-\alpha_2)} - C\eta_x \alpha_2^{-1} - a_2\lambda \geq - \frac{1}{\beta_x} - a_2\lambda + 2 \mu_f\tau (1-\alpha_1)\frac{\frac{1}{\beta_x} + a_2 \lambda}{2 \mu_f\tau (1-\alpha_1) - C \eta_x}=\lambda_1(-2\mu_f\tau(1-\alpha_1)+C\eta_x+2\mu_f\tau(1-\alpha_1)) = C\eta_x\lambda_1$. Hence,
\begin{align*}
&\rho \Big( 1 + C \eta_x \Big) \geq 1 \\
&\rho \Big(1 + \eta_y-\frac{4\lambda_3\sigma}{\mu_g}\Big) \geq 1 + 4 \lambda_3 \sigma L_{g_2^*}\\
& \rho \Big(\lambda_1 + C \eta_x \lambda_1\Big)
\geq \lambda_1 \\
& \rho \Big(\lambda_1 - \frac {1}{\beta_y} + \frac{(1 - \alpha_2 - C) \eta_x}{2 \gamma(1-\alpha_2)} - C\eta_x \alpha_2^{-1} - a_2\lambda\Big) \geq \rho \Big(\lambda_1 + C \eta_x \lambda_1\Big) \geq \lambda_1
\end{align*}

Supposing that $\mu_g = +\infty$ and $L_{g^*_2} = 0$, we get a rate
$\rho = \max((1+C\eta_x)^{-1}, (1+\eta_y)^{-1})$. \revisionone{We finally combine the results and use the fact that $\alpha_2 = \sqrt{\eta_x}$.}
\end{proof}

\bibliographystyle{plain+eid}
\bibliography{literature_rapdhg}

\end{document}